\newcommand\ILSum[3]{
\underbrace{\sum\limits^{\lex}_{#2}\ldots\sum\limits^{\lex}_{#2}}_{#1 \text{ times}}{#3}
}
\newcommand\scc[1]{\mathrm{sc}(#1)}
\newcommand\Nmod[1]{{N(#1)}}
\def\lex{\mathrm{lex} }
\newcommand\LSuml[2]{{\sum\limits^{\lex}}_{#1}{#2}}
\newcommand\LSumIt[3]{{\sum\limits^{\lex^{#1}}}_{#2}{#3}}
\newcommand\LSumAll[2]{\LSuma{*}{#1}{#2}}
\newcommand\framets[1]{\mathsf{#1}}
\newcommand\modelsts[1]{\framets{#1}}
\newcommand\logicts[1]{{\textsc{#1}}}
\newcommand\languagets[1]{\logicts{#1}}
\newcommand\ClassNamets[1]{\mathrm{#1}}
\newcommand\LogicNamets[1]{\logicts{#1}}
\newcommand\reduct[2]{#1^{\restr #2}}
\newcommand\LSum[2]{\sum_{#1}{#2}}
\newcommand\LSuma[3]{\sum_{#2}\nolimits^{#1} #3}
\newcommand\myoper[1]{\mathop{\myopts{#1}}}
\newcommand\myopts[1]{\mathrm{#1}}
\newcommand\dom[1]{\myoper{dom}(#1)}
\newcommand\vct[1]{{\boldsymbol #1}}
\newcommand\Refin[2]{\myoper{Ref}(#1,#2)}
\newcommand\Disk[2]{{\bigsqcup}_{\leq #1}  #2}
\newcommand\Diskl[2]{{\bigsqcup}_{< #1}  #2}
\newcommand\clR[1]{{\myopts{sk}#1}}
\newcommand\ov[1]{\overline{#1}}
\newcommand\cone[2]{{#1}\!\left[#2\right]}
\newcommand\cmo[4]{ {#1, {#2\mo_{#3}\,}#4}  }
\newcommand\notcmo[4]{ #1, #2 \not\mo_{#3} #4 }
\newcommand\SF[1]{\myoper{sub}({#1})}
\newcommand\SFMG[3]{
{
{#1}[#2,#3]
}
}
\newcommand\trc[2]{ [#1 ]^{#2} }
\newcommand\trS[1]{ {#1}^\prime }
\newcommand\trDec[1]{ \dot{#1}}
\newcommand\univ[1]{#1^{(\AA)}}
\newcommand\univL[1]{#1^{ [\AA] }}
\newcommand\univLit[2]{#1^{(\AA)_{#2}}}
\newcommand\empL[1]{#1^{(\emp)}}
\newcommand\empLit[2]{#1^{(\emp)_{#2}}}
\newcommand\LK[1]{\LogicNamets{K#1}}
\newcommand\LS[1]{\LogicNamets{S#1}}
\def\wK4{\LogicNamets{wK4}}
\def\T0{\LogicNamets{wK4T}_0}
\def\DL{\LogicNamets{DL}}
\def\PSPACE{\mathrm{PSpace}}
\def\NP{\mathrm{NP}}
\def\PTime{\mathrm{PTime}}
\def\RedPSP{\leq_\mathrm{T}^\PSPACE}
\def\RedKarp{\leq_\mathrm{m}^\PTime}
\def\mM{\modelsts{M}}
\def\frF{\framets{F}}
\def\frS{\framets{S}}
\def\frG{\framets{G}}
\def\frH{\framets{H}}
\def\frI{\framets{I}}
\def\frJ{\framets{J}}
\def\frT{\framets{T}}
\def\clF{\mathcal{F}}
\def\clS{\mathcal{S}}
\def\clG{\mathcal{G}}
\def\clI{\mathcal{I}}
\def\clJ{\mathcal{J}}
\def\clC{\mathcal{C}}
\def\nin{\not \in}
\def\ML{\languagets{ML}}
\def\PV{\languagets{PV}}
\def\J{\LogicNamets{J}}
\def\J{{\normalfont\LogicNamets{J}}}
\def\GL{\LogicNamets{GL}}
\def\Grz{\LogicNamets{Grz}}
\def\GrzB{\LogicNamets{Grz.Bin}}
\def\GLP{\LogicNamets{GLP}}
\def\restr{{\upharpoonright}}
\def\vL{L}
\def\Frames{\myoper{Fr}}
\def\SubFrs{\myoper{Sub}}
\def\Log{\myoper{Log}}
\def\Di{\lozenge}
\def\Boxm{\Box^{\leq m}}
\def\imp{\rightarrow}
\def\vf{\varphi}
\def\mo{\models}
\def\se{\subseteq}
\def\toto{\twoheadrightarrow}
\def\isom{\cong}
\def\tiff{\text{ iff }}
\def\AA{\forall}
\def\EE{\exists}
\def\emp{\varnothing}
\def\empc{{(\emp)_\Al}}
\def\zervf{\mathbf{0}_\vf}
\def\oGamma{\vct{\Gamma}}
\def\oDelta{\vct{\Delta}}
\def\con{\wedge}
\def\h{{\mathop{ht}}}
\def\b{{\mathop{br}}}
\def\rfn{\rhd}
\def\lvf{{\#\vf}}
\def\NPO{\ClassNamets{NPO}}
\def\Trf{{\ClassNamets{Tr}_f}}
\def\Tr{\ClassNamets{Tr}}
\def\QOf{{\ClassNamets{QO}_f}}
\def\ltms{{\,\leftthreetimes\,}}
\def\fus{*}
\def\dts{\dots}
\def\Al{\mathrm{A}}
\def\AlA{\Al}
\def\AlB{\mathrm{B}}
\def\AlC{\mathrm{C}}
\def\AlM{\Al}
\newcommand\trSub[1]{[#1]_q}
\newcommand\trLad[1]{[#1]_\mathrm{L}}
\newcommand\trRel[1]{[#1]_{\myoper{Rel}}}
\def\ordforms{\sqsupseteq}
\def\vect{\mathbf{v}}
\def\vectu{\mathbf{u}}
\def\vectU{\mathbf{U}}
\newcommand\plusV[1]{\mathop{+^{#1}}}
\def\cupa{\mathop{\cup^a}}
\def\plusa{\plusV{a}}
\newcommand\Sat[1]{\myoper{Sat}#1}
\newcommand\CSat[1]{\myoper{CSat}#1}
\newcommand\fv[2]{#1 [#2 ] }
\newcommand\algts[1]{ \mathrm{#1} }
\def\true{\mathrm{true}}
\def\false{\mathrm{false}}
\def\BigO{O}
\def\CondSatSumFa{\algts{CSatSum}_\clF}
\def\CondSatF{\algts{CSatSummand}}
\def\CSatJ{\algts{CSatJ}}
\newtheorem{theorem}{Theorem}
\newtheorem{lemma}[theorem]{Lemma}
\newtheorem{proposition}[theorem]{Proposition}
\newtheorem{corollary}[theorem]{Corollary}
\theoremstyle{remark}
\newtheorem{example}{Example}
\newtheorem*{examp*}{Example}
\newtheorem*{remark*}{Remark}
\theoremstyle{definition}
\newtheorem{definition}{Definition}
\newtheorem*{definition*}{Definition}
\newcommand\hide[1]{\commented{gray}{Hidden:}{#1}}
\renewcommand\hide[1]\empty
\newcommand\todo[1]{\commented{blue}{Todo}{#1}}
\renewcommand\todo[1]\empty
\newcommand\comm[1]{\commented{magenta}{Note:}{{\it   #1}}}
\renewcommand\comm[1]\empty
\newcommand\improve[1]{\commented{blue}{Improve:}{#1}}
\renewcommand\improve[1]\empty
\begin{document}
\newtheorem{remark}[theorem]{Remark}

  \title{Satisfiability problems
  on sums of Kripke frames}

  \author{Ilya B.~Shapirovsky}
\affil{\small New Mexico State University, USA\\
Institute for Information Transmission Problems of Russian Academy of Sciences
}
\maketitle


\begin{abstract}
We consider the operation of sum on Kripke frames, where a family of frames-summands is  indexed by elements of another frame.
In many cases, the modal logic of sums inherits the finite model property and decidability from the modal logic of summands \cite{BabRyb2010}, \cite{AiML2018-sums}.
In this paper
we show that, under a general condition,
the satisfiability problem on sums is
polynomial space Turing reducible to the satisfiability problem on summands.
In particular, for many modal logics decidability in $\PSPACE$ is an immediate corollary from the semantic characterization of the logic.

\medskip
\noindent
Keywords: sum of Kripke frames, finite model property, decidability, Turing reduction, PSpace, Japaridze's polymodal logic, lexicographic product of modal logics, lexicographic sum of modal logics
\end{abstract}


\section{Introduction}
\improve{Say a few words about hardness - a good section}

In classical model theory, there is a number of results (``composition theorems'')
that reduce the theory (first-order, MSO) of a compound structure (e.g., sum or product) to the theories of its components, see, e.g.,
\cite{Mostowski1952,FefermanVaught1959,Shelah-GenSums75,Gurevich79ShortChainsI} or
\cite{GurevichChapter85}. In this paper we use the composition method in the context of modal logic.

Given a family ${(\frF_i \mid i  \text{ in }\frI)}$ of frames (structures with binary relations)
indexed by elements of another frame $\frI$,
the {\em sum  of the frames $\frF_i$'s
over $\frI$} is
obtained from their disjoint union by connecting elements of $i$-th and $j$-th distinct components according to the relations in $\frI$.
Given a class $\clF$ of frames-summands and a class $\clI$ of frames-indices, $\LSum{\clI}{\clF}$ denotes the class  of all sums of $\frF_i$'s in $\clF$ over
$\frI$ in $\clI$.

In many cases, this operation preserves the finite model property and decidability of the logic of summands \cite{BabRyb2010}, \cite{AiML2018-sums}.
In this paper
we show that transferring results also hold for the complexity of the modal satisfiability problems on sums.
In particular, it follows that for many logics $\PSPACE$-completeness is an immediate corollary of semantic characterization.

It is a classical result by R. Ladner that the decision problem for the logic of (finite) preorders $\LS{4}$ is in $\PSPACE$ \cite{Ladner77}.
In \cite{ShapJapar}, it was shown that the polymodal provability logic $\GLP$ is also decidable in $\PSPACE$.
In spite of the significant difference between these two logics,
there is a uniform proof for both these two particular systems, as well as for many other important modal logics:
the general phenomenon is that
the modal satisfiability
problem on sums
over Noetherian (in particular, finite) orders
is polynomial space Turing reducible to the
modal satisfiability
problem on summands.
In the case of $\LS{4}$ these summands are frames of the form $(W,W\times W)$ with the satisfiability problem being in $\NP$, so in $\PSPACE$. And hence,  $\LS{4}$ is in $\PSPACE$.
In the case of $\GLP$, the class of summands is even simpler: the only summand required is an irreflexive singleton \cite{Bekl-Jap}.
(We will discuss these and other examples in Sections \ref{sub:examples}, \ref{subs:Japar}, \ref{sub:refandlexProduct}.)

The paper has the following structure. Section \ref{sec:prel} contains preliminary material.
Section \ref{sec:sums}  is about truth-preserving operations on sums of frames; it contains
necessary tools for the complexity results, and also quotes recent results on the finite model property. This section is based on \cite{AiML2018-sums}.
The central Section 4 is about complexity.
The reduction between sums and summands is described in
Theorem \ref{thm:main_compl_res} and in its more technical (but more tunable) version Theorem \ref{Thm:CorrectAlgTree}; these are the main results of the paper.
This section elaborates earlier works \cite{ShapJapar,ShapPSPACE05} (in particular,
our new results
significantly generalize and simplify Theorems 22 and 35 in \cite{ShapJapar}).
In Section \ref{sec:variations} we discuss
modifications of the sum operation: (iterated) lexicographic sums of frames, which are important in the context of provability logics \cite{Bekl-Jap};
lexicographic products of frames, earlier studied in \cite{Balb2009,BalbLinAx2010,BalbianiMik2013,BalbDuque2016}; the operation of refinement of modal logics introduced in \cite{BabRyb2010}.
Further results and directions are discussed in Section \ref{sec:concl}.

\improve{
In .. we apply it to obtain a uniform proof for ... (these results are well-known) and for logics ....
In Section ... we discuss the lower bound; this part is simple, since all the required construction has already been provided in
Lad and Spaan \cite{...}, and we only need to extract a convenient formulation for sums.
}

\hide{

This natural operation (being a particular case of generalized sum of models introduced by S. Shelah in the 1970th) has had several important applications in the context of modal logic over the last decade. Iterated sums of Noetherian orders were used by L. Beklemishev to study models of the polymodal provability logic. Then it was noted by the author that sums of frames provide a natural way to study computational complexity of modal satisfiability problems. S.~Babenyshev and V.~ Rybakov considered an operation on frames called {\em refinement}, and showed that under a very general condition this operation preserves the finite model property and decidability of logics; refinements can be considered as special instances of sums of frames. The lexicographic product of modal logics, introduced by Ph. Balbiani, is another example of an operation that can be defined via sums of frames.

In this talk, I, first, will discuss general semantic tools for operating with sums of Kripke frames,
and apply them to study the finite model property and decidability.
Then, I will provide conditions under which the modal satisfiability problem on
$\LSum{\clI}{\clF}$ is polynomial space Turing reducible to the modal satisfiability problem on $\clF$.

\comm{Shelah, products etc}

}

\section{Preparatory syntactical and semantical definitions}\label{sec:prel}
\improve{This title if due to Mostowski}

Let $\AlA\leq \omega$. The set $\ML(\AlA)$ of {\em modal formulas over $\AlA$} (or {\em $\AlA$-formulas}, for short)  is built from
a countable set of {\em variables} $\PV=\{p_0,p_1,\ldots\}$ using Boolean connectives $\bot,\imp$ and unary connectives $\Di_a$, $a< \AlA$ ({\em modalities}).
The connectives
$\vee,\wedge, \neg,\top,\Box_a$
are defined as abbreviations in the standard way, in particular $\Box_a\vf$ is $\neg\Di_a\neg\vf$.
\improve{When $A$ is a singleton, we omit the index for modalities. \improve{Do we need it?}}

\medskip

An ($\AlA$-)frame is a structure  $\frF=(W,(R_a)_{a< \AlA})$,
where ${W\ne\varnothing}$ and ${R_a\se W{\times}W}$ for $a< \AlA$. A \emph{(Kripke) model on} $\frF$ is a pair $\mM=(\frF,\theta)$, where $\theta:\PV\to 2^W$.
We write $\dom{\frF}$ for $W$, which is called the {\em domain} of $\frF$.
We write $u\in \frF$ for $u\in \dom{\frF}$.
For $u\in W$, $V\subseteq W$, we put $R_a(u)=\{v\mid uR_av\}$,
$R_a[V]=\cup_{v\in V} R_a(v)$.

The \emph{truth relation} in a model is defined in the usual way, in particular
${\mM,w\mo\Di_a\vf}$ means that  ${\mM,v\mo\vf}$  for some $v$ in $R_a(w)$.
A formula $\vf$ is {\em satisfiable in a model $\mM$} if $\mM,w\mo\vf$ for some $w$ in $\mM$.
A formula is {\em satisfiable in a frame $\frF$} ({\em in a class $\clF$ of frames})
if it is satisfiable in some model on $\frF$ (in some model on a frame in $\clF$).
$\vf$ is valid in a frame $\frF$ (in a class $\clF$ of frames) if $\neg \vf$ is not satisfiable in $\frF$ (in $\clF$).
Validity of a set of formulas means validity of every formula in this set.

The notions of p-morphism, generated subframe and submodel are defined in the standard way, see e.g. \cite[Section 1.4]{ManyDim}.
The notation $\frF\toto \frG$ means that $\frG$ is a p-morphic image of $\frF$;  $\frF\isom \frG$ means that
$\frF$ and $\frG$ are isomorphic.
\hide{We write $\cone{\frF}{w}$ for the subframe of $\frF$ generated by the singleton $\{w\}$;
such frames are called {\em cones in $\frF$}.
}

\medskip

A ({\em propositional normal modal}) {\em logic} is a set $\vL$ of formulas
that contains all classical tautologies, the axioms $\neg\Di_a \bot$  and
$\Di_a (p_0\vee p_1) \imp \Di_a p_0\vee \Di_a p_1$ for each $a$  in $\AlA$, and is closed under the rules of modus ponens,
substitution and monotonicity (if $\vf\imp\psi\in \vL$, then $\Di_a\vf\imp\Di_a\psi\in \vL$, for each $a$  in $\AlA$).

The set $\Log{\clF}$ of all formulas that are valid in a class $\clF$ of $\AlA$-frames is a logic (see, e.g., \cite{CZ}); it is called the {\em logic of $\clF$};
such logics are called {\em Kripke complete}.
A logic has the {\em finite model property}
if it is the logic of a class of finite frames (a frame is finite, if its domain is).
$\Frames{\vL}$ denotes the class of all frames validating $\vL$.

Remark that for a Kripke complete logic, its decision problem is the
validity problem on the class of all its frames (or on any other class $\clF$ such that $L=\Log(\clF)$).
The dual problem is the satisfiability:
$\Sat(\clF)$ is the set of all formulas satisfiable in $\clF$ (in the signature of $\clF$).
Remark that for the class $\PSPACE$ (as well as for any other deterministic complexity class),
$\Sat{\clF}\in \PSPACE$ iff $\Log \clF\in \PSPACE$.

\medskip
Natural numbers are considered as finite ordinals.  Given a sequence $\vct{v}=(v_0,v_1,\ldots)$, we write $\vct{v}(i)$ for $v_i$.

 \todo{
$\LS{5}.$
$\LS{4.1}$, $\Grz.2$
}

\section{Sums}\label{sec:sums}

We fix  $\Al\leq \omega$\improve{Why $\leq \omega?$} for the alphabet and consider the language $\ML(\Al)$.

\comm{old; in particular, old notations ($N$ for $\Al$)}
Consider a non-empty family $(\frF_i)_{i\in I}$ of $\Al$-frames
$\frF_i=(W_i,(R_{i,a})_{a\in \Al})$.
The {\em disjoint union} of these frames is the $\Al$-frame
$\bigsqcup_{i\in I} \frF_i=(\bigsqcup_{i\in I}{W_i},(R_a)_{a\in \Al})$, where $\bigsqcup_{i\in I}{W_i}=\bigcup_{i\in I}(\{i\}\times W_i)$
is the  disjoint union of sets $W_i$,
and $$(i,w)R_a (j,v) \quad \tiff \quad  i = j \,\&\,w R_{i,a} v.$$

Suppose that $I$  is the domain of another $\Al$-frame $\frI=(I,(S_a)_{a\in \Al})$.
\begin{definition}\label{def:sum-poly-extra}
The {\em sum of the family $(\frF_i)_{i\in I}$ of $\Al$-frames over the $\Al$-frame $\frI=(I,(S_a)_{a\in \Al})$} is the $\Al$-frame
 $\LSum{i\in \frI}{\frF_i}=(\bigsqcup_{i \in I} W_i, (R^\Sigma_a)_{a\in \Al})$, where
$$(i,w)R^\Sigma_a (j,v) \quad \tiff \quad (i = j \,\&\,w R_{i,a} v) \text{ or } (i\neq j \,\&\, iS_a j).$$
{\em  The sum of models} $\LSum{i\in\frI}{(\frF_i,\theta_i)}$ is the model $(\LSum{i\in\frI}{\frF_i},\theta)$,
where $(i,w)\in \theta(p)$ iff $w\in \theta_i(p)$.

For  classes $\clI$, $\clF$  of $\Al$-frames,
let $\LSum{\clI}{\clF}$  be the class of all sums
$\LSum{i\in\frI}{\frF_i}$ such that $\frI \in \clI$ and  $\frF_i\in \clF$ for every $i$ in $\frI$.
\end{definition}

\improve{

\begin{wrapfigure}{R}{0.4\textwidth}
\begin{center}
\includegraphics[width=0.45\textwidth]{sum-unimod-c.jpg}
\end{center}
\caption*{A unimodal sum}
\end{wrapfigure}
}

\begin{remark}\label{rem:independOfRefl}
We do not require that $S_a$'s are partial orders or even transitive relations.
Also, we note that the relations $R^\Sigma_a$ are independent of reflexivity of the relations $S_a$:
if  $\frI'=(I,(S'_a)_{a\in N})$, where $S'_a$ is the reflexive closure of $S_a$ for each $a\in \Al$,
then $\LSum{i\in \frI}{\frF_i}=\LSum{i\in \frI'}{\frF_i}$.
\end{remark}

\begin{example}[Skeleton and clusters]
A {\em cluster} is a frame of the form $(W,W\times W)$.

Let $\frF=(W,R)$ be a preorder.
The {\em skeleton} of $\frF$ is the partial order $\clR{\frF}=(\ov{W},\leq_R)$, where
$\ov{W}$ is the quotient set of $W$ by the equivalence $R\cap R^{-1}$, and for
$C,D\in \ov{W}$, $C\leq_R D$ iff $\EE w\in C \,\EE v\in D\; wRv$.
The restriction of $\frF$ on an element
of $\ov{W}$ is  called a {\em cluster in $\frF$}.

It is easy to see that every preorder $\frF$ is isomorphic to the sum $\LSum{C\in \clR{\frF}}(C,C\times C)$
of its clusters over its skeleton. \improve{Formally. clusters are not frames. Perhaps, include the definition of skeleton in Prels}
\end{example}

\begin{example}\label{ex:wk4semant}
Suppose that $\frF=(W,R)$ satisfies the property of {\em weak transitivity }
\mbox{$x R z R  y\, \Rightarrow \,xR y\vee x=y.$}
Let $\frI$ be the skeleton of the preorder $(W,R^*)$, where $R^*$
is the transitive reflexive closure of $R$.
Then $\frF$ is isomorphic to a sum $\LSum{i\in\frI}{\frF_i}$ such that every $\frF_i=(W_i,R_i)$ satisfies the property
$x\neq y\,\Rightarrow\, xR_i y $.
\end{example}

We shall be mainly interested in the polymodal case where
the indexing frame has only one non-empty relation.
\begin{definition}\label{def:asum}
Consider a unimodal frame $\frI=(I,S)$ and a family $(\frF_i)_{i\in I}$ of $\Al$-frames (or $\Al$-models).
For $a\in \AlA$,
the {\em $a$-sum} $\LSuma{a}{\frI}{\frF_i}$ is the sum
$\LSum{\frI'}{\frF_i}$, where
$\frI'$ is the $\Al$-frame whose domain is $I$,
the $a$-th relation is $S$ and other relations are empty.
If $\clF$ is a class of $\Al$-frames,  $\clI$ is a class of 1-frames, then $\LSuma{a}{\clI}{\clF}$
is the class of all sums
$\LSuma{a}{\frI}{\frF_i}$, where $\frI \in \clI$ and all $\frF_i$ are in $\clF$.\improve{Unravel}
\end{definition}

\improve{S42?}

\improve{
As we shall prove in Theorem \ref{THMAIN}

As we shall prove in Theorem \ref{THMAIN}, the modal satisfiability problem on preorders (i.e., the satisfiability problem for the logic $\LS{4}$) can be reduced (....) to the satisfiability
in clusters.  \comm{Now - $\LS{5}$; see slides in LC2019}
The situation ..... with $\wK4$ is ....:

; for the logic ..., the satisfiability is reducible to .... In both these  ,... In general,
}

\subsection{Basic truth-preserving operations}

The theorem below is a collection of facts illustrating
how sums interact with p-morphisms, generated subframes, and disjoint unions. Their proofs are straightforward from definitions,
see \cite[Section 3]{AiML2018-sums} for the details.
\begin{theorem}[\cite{AiML2018-sums}]~\label{thm:basicTrPr}
\begin{enumerate}
    \item  Let $\frI$ be an $\Al$-frame and
    $(\frF_i)_{i\in\frI}$ a family of $\Al$-frames.
    If $\frJ$  is a generated subframe of $\frI$, then  $\LSum{i\in \frJ}{\frF_i}$ is a generated subframe of $\LSum{i\in \frI}{\frF_i}$.

    \item
Let  $\frI$, $\frJ$ be $\Al$-frames,
$(\frF_i)_{i\in \frI}$, $(\frG_j)_{j\in \frJ}$
families of $\Al$-frames.
Suppose that all the relations in $\frJ$ are irreflexive.
\begin{enumerate}
\item
If
$f: \frI\toto \frJ$ and
$\frF_i\toto\frG_{f(i)}$ for all $i$ in $\frI$, then $\LSum{i\in \frI}{\frF_i}\toto\LSum{j\in \frJ}{\frG_j}.$
\item \label{prop:psp:pmorph-sum-extra-2}
If $\frI=\frJ$ and $\frF_i\toto\frG_i$ for all $i$ in $\frI$, then $\LSum{i\in\frI}{\frF_i}\toto\LSum{i\in\frI}{\frG_i}.$
\item \label{prop:psp:pmorph-sum-extra-3}
If $f:\frI\toto \frJ$, then $\LSum{i\in \frI}{\frG_{f(i)}}\toto\LSum{j\in \frJ}{\frG_j}.$
\end{enumerate}
\item\label{item3:basicTrPr}
\begin{enumerate}
    \item Let $\frI$ be an $\Al$-frame,  $(\frJ_i)_{i\in \frI}$ a family of $\Al$-frames, and
$(\frF_{ij})_{i\in \frI, j\in \frJ_i}$
a family
of $\Al$-frames. Then
$$\LSum{i \in \frI}{\LSum{j\in \frJ_i} {\frF_{ij}}}
\quad \isom\quad
\LSum{(i,j)\in \LSum{k \in \frI}{\frJ_k}}{\frF_{ij}}.$$

\item
Let   $I$ be a non-empty set, $(\frJ_i)_{i\in \frI}$ a family of $\Al$-frames, and
$(\frF_{ij})_{i\in \frI, j\in \frJ_i}$
a family
of {$\Al$-frames}. Then
$${\bigsqcup}_{i \in I}{\LSum{j \in \frJ_i} {\frF_{ij}}}
\quad\isom\quad
\LSum{(i,j)\in {\bigsqcup}_{k \in I}{\frJ_k}}{\frF_{ij}}.$$

\item\label{item3c:basicTrPr}
Let $\frI$ be an $\Al$-frame, $(J_i)_{i\in \frI}$ a family  of non-empty sets, and
$(\frF_{ij})_{i\in \frI, j\in J_i}$ a family  of $\Al$-frames. Then
$$\LSum{i \in  \frI}{{\bigsqcup}_{j \in J_i} \frF_{ij}}
\quad\isom\quad
\LSum{(i,j)\in \LSum{k \in \frI}{(J_k,\empc)}}{\frF_{ij}},$$
where $\empc$ denotes the sequence of length $\Al$ in which every element is the empty set.
\end{enumerate}
\end{enumerate}

\end{theorem}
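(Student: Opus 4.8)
The plan is to treat the three parts in turn, each being a direct verification against Definition~\ref{def:sum-poly-extra}; the only genuinely subtle point lies in part~2, where irreflexivity of $\frJ$ is used.

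For part~1 I would regard the domain $\bigsqcup_{i\in\dom\frJ}W_i$ of $\LSum{i\in\frJ}{\frF_i}$ as a subset of the domain of $\LSum{i\in\frI}{\frF_i}$ and check two things: that this subset is closed under each $R^\Sigma_a$, and that the induced relations coincide with those computed over $\frJ$. For closure, if $i\in\dom\frJ$ and $(i,w)R^\Sigma_a(j,v)$, then either $i=j$, whence $j\in\dom\frJ$, or $i\neq j$ with $iS_aj$, and then $j\in\dom\frJ$ precisely because $\frJ$ is a generated subframe of $\frI$. Agreement of the relations is immediate, since for $i,j\in\dom\frJ$ the relation $S_a$ simply restricts and the intra-component relations $R_{i,a}$ are unchanged.

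For part~2 the key is to prove~(a) and read off (b),(c) as specializations. I would fix surjective p-morphisms $g_i\colon\frF_i\toto\frG_{f(i)}$ and set $h(i,w)=(f(i),g_i(w))$. Surjectivity of $h$ follows from that of $f$ and of the $g_i$. The forth condition splits along the sum definition: an intra-component step ($i=i'$, $wR_{i,a}w'$) is handled by the forth property of $g_i$, and a cross-component step ($i\neq i'$, $iS_ai'$) yields $f(i)S_af(i')$ by the forth property of $f$, where irreflexivity of $\frJ$ forces $f(i)\neq f(i')$ — exactly what places the image in the cross-component case of the target sum. The back condition is dual: a target step out of $(f(i),g_i(w))$ is lifted either through the back property of $g_i$ (intra) or through that of $f$ (cross), using that distinct images force distinct indices. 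Then (b) is (a) with $f=\mathrm{id}_\frI$, and (c) is (a) with every $g_i=\mathrm{id}$.

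For part~3 everything reduces to~(a). I would define the evident bijection $\Phi(i,(j,x))=((i,j),x)$ between the domains and verify that it preserves each $R^\Sigma_a$ in both directions. Unrolling the two definitions, a pair is related exactly when one of three mutually exclusive situations holds: same outer and inner index with $xR^{\frF_{ij}}_ax'$; same outer index $i=i'$ with $j\neq j'$ and $jS^{\frJ_i}_aj'$; or $i\neq i'$ with $iS^\frI_ai'$. These three disjuncts describe the nested-sum relation and the flattened-sum relation verbatim, so $\Phi$ is an isomorphism. Finally (b) is (a) with $\frI$ carrying only empty relations (the outer sum degenerates to a disjoint union), and (c) is (a) with each $\frJ_i=(J_i,\empc)$ (the inner sums degenerate to disjoint unions). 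The main obstacle — indeed the only non-mechanical point — is pinning down where irreflexivity enters part~2: without it, two indices $i\neq i'$ with $iS_ai'$ could be glued by $f$ onto a single reflexive point, and the forth condition would then demand an intra-component relation that the $g_i$ need not supply; everything else is routine bookkeeping against the definition.
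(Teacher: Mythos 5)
Your verification is correct and follows exactly the route the paper intends: the theorem is stated with the remark that all items are ``straightforward from definitions,'' and your case analysis against Definition~\ref{def:sum-poly-extra} --- including the correct identification of why irreflexivity of $\frJ$ is needed in part~2 (a reflexive image point would collapse a cross-component edge into a component where no relation is guaranteed) and the reduction of 2(b), 2(c), 3(b), 3(c) to 2(a) and 3(a) --- is precisely that straightforward verification. No discrepancy with the paper's approach.
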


\subsection{Sums and universal modality}
\improve{and the finite model property}
\hide{
Classes of $\Al$-frames $\clF$ and $\clG$ are said to be {\em interchangeable}, in symbols $\clF\equiv\clG$,
if $\clF$ and $\clG$ have the same modal logic in the
language enriched with the universal modality.  Formally,}
For an $\Al$-frame $\frF=(W,(R_0,R_1,\ldots))$,  let $\univ{\frF}$ be the
$(1+\Al)$-frame $(W,(W\times W, R_0,R_1,\ldots))$.
For a class  $\clF$  of $\Al$-frames, $\univ{\clF}=\{\univ{\frF}\mid \frF\in\clF\}$.
\hide{We put $$\clF\equiv\clG  \text{ if  }\Log{\univ{\clF}}=\Log{\univ{\clG}}.$$
}

In \cite{AiML2018-sums}, it was shown that if the classes
$\univ{\clF}$ and $\univ{\clG}$ have the same logic, then for any class $\clI$ of $\Al$-frames,
the logics of sums $\LSum{\clI}{\clF}$ and $\LSum{\clI}{\clG}$ are equal; moreover,
the logics of the classes $\univ{\LSum{\clI}{\clF}}$ and $\univ{\LSum{\clI}{\clG}}$ are
equal, thus we have $\Log\LSum{\clJ}{(\LSum{\clI}{\clF})}
=\Log\LSum{\clJ}{(\LSum{\clI}{\clG})}$
for any other class of frames-indices $\clJ$, and so on.

\begin{theorem}[\cite{AiML2018-sums}, Theorem 4.11]\label{thm:interch1}
Let $\clI$, $\clF$, $\clG$ be classes of $\Al$-frames.
If $\Log{\univ{\clF}}=\Log{\univ{\clG}}$, then
$\Log\univ{\LSum{\clI}{\clF}}=\Log\univ{\LSum{\clI}{\clG}}$, and hence
$\Log\LSum{\clI}{\clF}=\Log\LSum{\clI}{\clG}$.
\end{theorem}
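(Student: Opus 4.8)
The plan is to treat this as a composition theorem in the style of Feferman--Vaught, with the universal modality $\Di_0$ of $\univ{\cdot}$ supplying the ``global'' glue. First I would record the elementary identity $\univ{(\LSum{i\in\frI}{\frF_i})}\isom\LSum{i\in\univ{\frI}}{\univ{\frF_i}}$: adjoining a universal first modality to a sum is the same as summing the $\univ{\frF_i}$ over $\univ{\frI}$, since in that sum the adjoined relation becomes the full relation on the whole domain. Thus in a model on $\univ{(\LSum{i\in\frI}{\frG_i})}$ the modality $\Di_0$ is global, while $\Di_a$ for $a\ge1$ either moves inside the current summand along $R_{i,a-1}$ or jumps to \emph{any} point of an $S_{a-1}$-successor summand. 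The ``hence'' clause is then immediate: for any class $\clC$, a formula $\vf$ lies in $\Log\clC$ iff its index shift $\Di_a\mapsto\Di_{a+1}$ lies in $\Log\univ{\clC}$ (the adjoined $\Di_0$ being unused), so equality of the $\univ{\cdot}$-logics restricts to equality of the plain logics; applying this to $\LSum{\clI}{\clF}$ and $\LSum{\clI}{\clG}$ yields the ``hence''. It therefore suffices to prove the first equality, and by symmetry to show that every $(1+\Al)$-formula $\chi$ refuted in some $\univ{(\LSum{i\in\frI}{\frG_i})}$ with $\frI\in\clI$, $\frG_i\in\clG$ is refuted in $\univ{(\LSum{i\in\frI}{\frF_i})}$ for the \emph{same} $\frI$ and suitable $\frF_i\in\clF$. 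Since $\Di_0$ is global, $\chi$ is refuted exactly when $\neg\chi$ is realized somewhere, so it is enough to preserve the global set of realized $\Sigma$-formulas, where $\Sigma=\SF{\chi}\cup\{\neg\chi\}$.

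Next I would fix a refuting model $\mM$ on $\univ{(\LSum{i\in\frI}{\frG_i})}$ and read off, for each summand $i$, the finite ``interface data'': the local model $(\frG_i,\theta_i)$, the set $\rho_i\se\Sigma$ of formulas realized in $\mM$ at points of the $i$-th summand, the global set $g=\bigcup_i\rho_i$, and $\eta^{(i)}_a=\bigcup\{\rho_j\mid j\ne i,\ iS_{a-1}j\}$. To these I associate a purely local translation $\psi\mapsto\psi^{e_i}$, with $e_i=((\eta^{(i)}_a)_a,g)$, that replaces each $\Di_a\alpha$ with $a\ge1$ by $\Di_a\alpha^{e_i}\vee[\alpha\in\eta^{(i)}_a]$ and each $\Di_0\alpha$ by the constant $[\alpha\in g]$ (here $[\,\cdot\,]$ denotes $\top$ or $\bot$). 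A routine induction on $\psi$ gives the \emph{anchor} $\mM,(i,w)\mo\psi\iff(\frG_i,\theta_i),w\mo\psi^{e_i}$: the jump component of $\Di_a$ and the value of the global $\Di_0$ are precisely the constants hard-wired into the translation. The essential point is that $\rho_i$, $g$ and $\eta^{(i)}$ are merely \emph{read off} the fixed model $\mM$, so nothing here is circular --- truth in $\mM$ is already defined.

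The hypothesis $\Log\univ{\clF}=\Log\univ{\clG}$ enters only through one observation: two classes with equal logic satisfy exactly the same formulas. For each $i$, the behaviour I must reproduce is the finite bit-vector recording which of the formulas $\psi^{e_i}$ ($\psi\in\Sigma$) are realized in $(\frG_i,\theta_i)$; using the \emph{local} universal modality of $\univ{\frG_i}$, this is encoded by a single satisfiable $(1+\Al)$-formula $\bigwedge_{\psi\in\Sigma}(\pm\Di_0\psi^{e_i})$. Transferring its satisfiability from $\univ{\clG}$ to $\univ{\clF}$ produces a model $(\univ{\frF_i},\theta'_i)$ with $\frF_i\in\clF$ in which exactly the same $\psi^{e_i}$ are realized. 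Assembling these, I set $\mM'=\LSum{i\in\frI}{(\frF_i,\theta'_i)}$ and, via the identity above, regard it as a model on $\univ{(\LSum{i\in\frI}{\frF_i})}$.

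Finally I would verify, by induction on modal depth, the two statements (i) $\mM',(i,w)\mo\psi\iff(\frF_i,\theta'_i),w\mo\psi^{e_i}$ and (ii) the members of $\Sigma$ realized in the $i$-th summand of $\mM'$ are exactly those in $\rho_i$. At each depth, clause (i) for $\Di_a$ and $\Di_0$ is fed by clause (ii) at the previous depth (this is what makes the constants $\eta^{(i)}$ and $g$ correct for $\mM'$), while clause (ii) follows from (i), the transfer step, and the anchor applied to $\mM$. Hence $\mM'$ realizes exactly $g\ni\neg\chi$, so $\chi$ is refuted in $\univ{(\LSum{i\in\frI}{\frF_i})}$, and symmetry finishes the proof. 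I expect the main obstacle to be exactly this entanglement between ``truth in the sum'' and ``what the neighbouring summands realize'': handled naively it produces a fixpoint over $\frI$, which may be infinite and contain cycles. The device that defuses it is to fix the interface data $(\rho_i,g,\eta^{(i)})$ once and for all from the original model and carry it as Boolean constants through the translation, so that the only genuinely inductive statement is the depth induction above, and the hypothesis on $\clF,\clG$ is invoked solely through the finite, component-local type transfer.
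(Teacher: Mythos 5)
Your argument is correct and follows essentially the same route as the paper's own proof (Theorem 4.11 of \cite{AiML2018-sums}): your translation $\psi\mapsto\psi^{e_i}$ with interface data hard-wired as Boolean constants is exactly the syntactic form of truth under the external condition of a summand (Lemma \ref{psp:lem:extrernal}; cf.\ the translation $\trc{\vf}{\oGamma}$ in the proof of Proposition \ref{prop:CsatToSat-precon}), and your component-local transfer step is the tie/universal-modality correspondence quoted as Proposition 4.10. The depth induction you describe to break the apparent circularity is the same device that makes Lemma \ref{psp:lem:extrernal} work, so no gap remains.
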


In particular, it follows that if the logic of the class $\univ{\clF}$ has the finite model property,
then the logic of the class of sums $\LSum{\clI}{\clF}$ is equal to the logic of the class of sums
$\LSum{\clI}{\clG}$, where $\clG$ is a class of finite frames.

\improve{
\todo{Give illustration for the FMP}

\todo{Collect all general facts which can be formulated without conditional satisfiability.

- No, move to FMP subsection.

It seems that this is a good place to introduce Japaridze's logic, the logic J, and then introduce
the notation for a-sums. Then to formulate the fmp results.
}
}
\subsection{Decomposition of sums}


\comm{Def from AiML}

To reduce satisfiability in sums to the satisfiability in summands, we will
use an auxiliary notion: {\em satisfiability under conditions}.

\begin{definition}\label{def:sat-mod}
A sequence $\oGamma=(\Gamma_a)_{a\in \AlM}$, where $\Gamma_a$ are sets of $\AlM$-formulas,
is called a {\em condition} (in the language $\ML(\AlM)$).

Consider a model
$\mM=(W,(R_a)_{a\in \AlM},\theta)$,
 $w$  in $\mM$.
By induction on
the length
of an $\AlM$-formula $\vf$, we define the relation $\cmo{\mM}{w}{\oGamma}{\vf}$
(``{\em under the condition $\oGamma$, $\vf$ is true at $w$ in $\mM$''}): as usual,
$\notcmo{\mM}{w}{\oGamma}{\bot}$, $\cmo{\mM}{w}{\oGamma}{p}  \tiff  \mM,w\mo p$ for a variable $p$,
$\cmo{\mM}{w}{\oGamma}{\vf\imp \psi} \tiff  \notcmo{\mM}{w}{\oGamma}{\vf} \text{ or }
\cmo{\mM}{w}{\oGamma}{\psi}$; for $a\in \AlM$,
$$\cmo{\mM}{w}{\oGamma}{\Di_a \vf} \quad \tiff \quad \vf \in\Gamma_a \text{ or }
\EE v\in R_a(w)~~\cmo{\mM}{v}{\oGamma}{\vf}.$$
\end{definition}

In particular, if all $\Gamma_a$ are empty, then we have the standard notion of truth in a Kripke model:
$$\cmo{\mM}{w}{\empc}{\vf} \quad \tiff  \quad \mM,w\mo \vf,$$
where $\empc$ denotes the condition consisting of empty sets.
\comm{sequence of length $\AlM$ in which every element is the empty set. - it is unclear what is length $\AlM$, if $\alpha$ is not an ordinal}
The truth under conditions is respected by the standard operations on Kripke models:
\comm{Preamble}
\begin{proposition}\label{prop:truth-pres-for-cs}
Let $\oGamma$ be a condition and $\vf$ a formula.
\begin{enumerate}
\item
  If $\mM'$ is a generated submodel of $\mM$, then
$\cmo{\mM'}{w}{\oGamma}{\vf}$ iff $\cmo{\mM}{w}{\oGamma}{\vf}$ for every $w$ in $\mM'$.
\item

If $\mM=\bigsqcup_{i\in I}\mM_i$, then
$\cmo{\mM}{(i,w)}{\oGamma}{\vf}$ iff $\cmo{\mM_i}{w}{\oGamma}{\vf}$ for every $i$ in $I$ and every $w$ in $\mM_i$.
\item
If $f:\mM\toto \mM'$, then
$\cmo{\mM}{w}{\oGamma}{\vf}$ iff $\cmo{\mM'}{f(w)}{\oGamma}{\vf}$
for every $w$ in $\mM$.
\end{enumerate}
\end{proposition}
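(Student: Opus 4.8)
The plan is to prove all three statements by a single induction on the length of the formula $\vf$, holding the condition $\oGamma$ fixed throughout. The key observation that makes the induction go through uniformly is that the new feature of the relation $\cmo{\cdot}{\cdot}{\oGamma}{\cdot}$ compared to ordinary modal truth---namely the extra disjunct $\vf\in\Gamma_a$ in the clause for $\Di_a$---is \emph{model-independent}: it depends only on the condition and the formula, not on the model or the world. Hence truth under conditions reduces, for the purpose of transfer, to the standard modal invariance of the three operations.

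First I would dispatch the atomic and Boolean cases, which are identical across the three parts. For a variable $p$ the relation $\cmo{\mM}{w}{\oGamma}{p}$ reduces to ordinary truth $\mM,w\mo p$, which is preserved by generated submodels, respected componentwise by disjoint unions, and transported along p-morphisms (each being built into the definition of the respective operation on models); the case of $\bot$ is vacuous; and the case $\vf\imp\psi$ follows immediately from the induction hypothesis applied to $\vf$ and $\psi$ at the same world $w$, since the defining clause for $\imp$ is evaluated pointwise.

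The heart of the argument is the modal case $\Di_a\vf$, where I would use the defining equivalence
$$\cmo{\mM}{w}{\oGamma}{\Di_a\vf} \quad\tiff\quad \vf\in\Gamma_a \ \text{ or }\ \EE v\in R_a(w)\ \cmo{\mM}{v}{\oGamma}{\vf}.$$
By the observation above, the first disjunct is preserved identically on both sides of each equivalence, so everything reduces to the existential over the successor set $R_a(w)$, which is exactly what the standard invariance properties control. For part (1), the closure property of generated submodels gives $R_a(w)\subseteq\dom{\mM'}$ for $w$ in $\mM'$, so the witnesses $v$ range over the same set computed in $\mM'$ and in $\mM$; the induction hypothesis applied to $\vf$ at each such $v$ then yields the equivalence. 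For part (2), the successor set of $(i,w)$ in the disjoint union is $\{i\}\times R_{i,a}(w)$, so any witness lies in the same component $i$, and the induction hypothesis on $\vf$ matches the two sides. For part (3), the \emph{forth} condition of a p-morphism $f:\mM\toto\mM'$ carries a witness $v\in R_a(w)$ to a witness $f(v)\in R_a(f(w))$, while the \emph{back} condition produces, for any witness $v'\in R_a(f(w))$, a preimage $v\in R_a(w)$ with $f(v)=v'$; combined with the induction hypothesis on $\vf$, these give the two directions.

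Since this is essentially a routine lift of the classical modal invariance lemmas, the only genuine point of care---and the closest thing to an obstacle---is bookkeeping: one must apply the induction hypothesis to the proper subformula $\vf$ of $\Di_a\vf$ at the successor worlds rather than at $w$, and verify that none of the three operations ever alters $\oGamma$ (in particular the set $\Gamma_a$). As the operations act only on domains, valuations, and successor relations in the ways just described, and leave $\oGamma$ untouched, the induction closes uniformly for all three parts.
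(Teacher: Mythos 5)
Your proof is correct and follows exactly the route the paper indicates: the paper's own proof is the one-line remark that the statement follows by a straightforward induction on the length of $\vf$, mirroring the classical invariance lemmas, which is precisely the induction you carry out, with the correct key observation that the extra disjunct $\vf\in\Gamma_a$ in the $\Di_a$-clause is model-independent and hence transfers trivially.
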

\begin{proof}
This proof completely reflects the proof of these facts for the standard truth relation in Kripke models and can
be obtained by a straightforward induction on the length of $\vf$.
\end{proof}

Let $\SF{\vf}$ be the set of all subformulas of $\vf$. We put
$$\SFMG{\vf}{\mM}{\oGamma}={\{\psi\in \SF{\vf}\mid\cmo{\mM}{v}{\oGamma}{\psi} \text{ for some } v\}}.$$
\improve{this set of subformulas of $\vf$ is called the {\em characterization of $\vf$ with respect to $\mM$ and $\oGamma$}.}
In particular,  $\SFMG{\vf}{\mM}{\empc}$
 is the set of all subformulas of $\vf$ that are satisfiable in $\mM$.

A triple $(\vf,\Phi,\oGamma)$, where $\Phi\subseteq \SF{\vf}$,   is called a {\em tie}.
A tie $(\vf,\Phi,\oGamma)$ is {\em satisfiable} in a frame $\frF$ (in a class $\clF$ of frames)
if there exists a model $\mM$ on $\frF$ (on a frame in $\clF$)  such that
$\Phi=\SFMG{\vf}{\mM}{\oGamma}$.
Hence, we have:

\begin{proposition}\label{prop:sat_via_cSat}
A formula $\vf$ is satisfiable in a class $\clF$ of frames iff
there exists $\Phi\subseteq   \SF{\vf}$ such that
$\vf\in \Phi$ and the tie $(\vf,\Phi,\empc)$ is satisfiable in $\clF$.
\end{proposition}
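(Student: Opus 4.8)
The statement is essentially an unpacking of definitions, so the plan is to verify both implications by tracking carefully how satisfiability of a formula, the characterization set $\SFMG{\vf}{\mM}{\empc}$, and satisfiability of a tie are defined. The single fact I would record at the outset is the identity noted right after Definition \ref{def:sat-mod}: under the empty condition we have $\cmo{\mM}{w}{\empc}{\vf}$ iff $\mM,w\mo\vf$. Consequently $\SFMG{\vf}{\mM}{\empc}=\{\psi\in\SF{\vf}\mid \mM,v\mo\psi \text{ for some } v\}$ is exactly the set of subformulas of $\vf$ satisfiable in $\mM$, and the relation $\cmo{}{}{\empc}{}$ can be freely replaced by ordinary truth throughout.

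For the forward direction I would assume $\vf$ is satisfiable in $\clF$, fix a witnessing model $\mM$ on a frame of $\clF$ together with a world $w$ such that $\mM,w\mo\vf$, and set $\Phi:=\SFMG{\vf}{\mM}{\empc}$. By construction $\Phi\subseteq\SF{\vf}$, and the same model $\mM$ witnesses that the tie $(\vf,\Phi,\empc)$ is satisfiable in $\clF$. Since $\vf\in\SF{\vf}$ and $\cmo{\mM}{w}{\empc}{\vf}$ holds, we get $\vf\in\SFMG{\vf}{\mM}{\empc}=\Phi$, which supplies the remaining clause $\vf\in\Phi$. For the backward direction, given $\Phi\subseteq\SF{\vf}$ with $\vf\in\Phi$ and a model $\mM$ on a frame in $\clF$ realizing $\Phi=\SFMG{\vf}{\mM}{\empc}$, the membership $\vf\in\SFMG{\vf}{\mM}{\empc}$ directly yields a world $v$ with $\cmo{\mM}{v}{\empc}{\vf}$, i.e.\ $\mM,v\mo\vf$; hence $\vf$ is satisfiable in $\clF$.

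There is no real obstacle here: the only point needing (minimal) attention is the trivial but essential observation that $\vf\in\SF{\vf}$, which is what makes the clause $\vf\in\Phi$ genuinely forced in the forward direction and genuinely usable in the backward one. Everything else is an immediate read-off from Definition \ref{def:sat-mod} and the definition of a satisfiable tie, so I would keep the write-up to a few lines.
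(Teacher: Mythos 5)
Your proof is correct and matches the paper's intent exactly: the paper states this proposition with no proof at all (just ``Hence, we have:''), treating it as an immediate unpacking of the definitions of $\SFMG{\vf}{\mM}{\empc}$ and tie satisfiability, which is precisely what you carry out. The only substantive observations you make --- that $\cmo{\mM}{w}{\empc}{\psi}$ coincides with $\mM,w\mo\psi$ and that $\vf\in\SF{\vf}$ --- are exactly the ones needed.
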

Classes of $\Al$-frames $\clF$ and $\clG$ are said to be {\em interchangeable}, in symbols $\clF\equiv\clG$, if the same ties are satisfiable in classes
$\clF$ and $\clG$.
From the above proposition, it follows that if $\clF\equiv \clG$, then the logics of these classes are equal.
Moreover, the following holds:
\begin{proposition}[\cite{AiML2018-sums}, Proposition 4.10]
Let $\clF$ and $\clG$ be classes of $\Al$-frames.
The following are equivalent:
\begin{itemize}
    \item  A tie is satisfiable in $\clF$ iff it is satisfiable in $\clG$.
    \item A formula is satisfiable in $\univ{\clF}$ iff it is
    satisfiable in $\univ{\clG}$.
\end{itemize}
\end{proposition}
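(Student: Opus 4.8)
The plan is to prove both implications by translating between the two ``global'' devices in play: a condition $\oGamma$ records formulas deemed reachable regardless of successors, while the universal modality records formulas true somewhere in the model. Throughout, for $\frF=(W,(R_a)_{a\in\Al})$ I identify an $\Al$-formula with its image in the language of $\univ\frF$ under the evident reindexing of the original modalities, and I write $\Di_u$ for the diamond interpreted by the universal relation $W\times W$ of $\univ\frF$. Note that for an $\Al$-formula $\psi$ and a model $\mM$ on $\frF$ one has $\mM,w\mo\psi$ iff $\univ\mM,w\mo\psi$, and $\univ\mM,w\mo\Di_u\psi$ iff $\psi$ is satisfiable in $\mM$ (this value being independent of $w$); also every model on $\univ\frF$ is $\univ\mM$ for a unique model $\mM$ on $\frF$.

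For the implication from the second item to the first, I would first eliminate the condition syntactically. Given $\oGamma=(\Gamma_a)_{a\in\Al}$, define a translation $t$ on $\Al$-formulas that is the identity on variables, commutes with the Booleans, and sets $t(\Di_a\psi)=\top$ if $\psi\in\Gamma_a$ and $t(\Di_a\psi)=\Di_a t(\psi)$ otherwise. A routine induction on $\psi$ (mirroring Proposition~\ref{prop:truth-pres-for-cs}) gives $\cmo{\mM}{w}{\oGamma}{\psi}$ iff $\mM,w\mo t(\psi)$, so $\psi\in\SFMG{\vf}{\mM}{\oGamma}$ iff $t(\psi)$ is satisfiable in $\mM$, i.e. iff $\univ\mM\mo\Di_u t(\psi)$. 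For a tie $(\vf,\Phi,\oGamma)$ set
\[
\chi \;=\; \bigwedge_{\psi\in\Phi}\Di_u t(\psi)\ \con\ \bigwedge_{\psi\in\SF{\vf}\setminus\Phi}\neg\Di_u t(\psi).
\]
Since each conjunct has a model-global truth value, $\univ\mM\mo\chi$ iff $\Phi=\SFMG{\vf}{\mM}{\oGamma}$. Hence the tie $(\vf,\Phi,\oGamma)$ is satisfiable in $\clF$ iff $\chi$ is satisfiable in $\univ\clF$, and likewise for $\clG$; the second item applied to $\chi$ then yields the first.

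For the converse I would eliminate the universal modality by guessing its global truth values. Fix a $(1+\Al)$-formula $\chi$ and let $\Di_u\beta_1,\dots,\Di_u\beta_m$ list its universal-diamond subformulas. For $X\se\{1,\dots,m\}$ define a reduction $r_X$ from subformulas of $\chi$ to $\Al$-formulas, the identity on variables, commuting with Booleans and the non-universal modalities, with $r_X(\Di_u\beta_i)=\top$ if $i\in X$ and $\bot$ otherwise. Call $X$ \emph{sound} for a model $\mM$ on $\frF$ if, for each $i$, $i\in X$ iff $r_X(\beta_i)$ is satisfiable in $\mM$; a routine induction shows that soundness makes $\univ\mM,w\mo\gamma$ equivalent to $\mM,w\mo r_X(\gamma)$, and that the semantic guess $X_\mM=\{i\mid\beta_i\text{ is satisfiable in }\univ\mM\}$ is itself sound. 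Consequently $\chi$ is satisfiable in $\univ\frF$ iff $r_X(\chi)$ is satisfiable in $\mM$ for some $\mM$ and some $X$ sound for $\mM$. Finally, for each $X$ choose an $\Al$-formula $\vf_X$ whose subformulas include $r_X(\chi)$ and every $r_X(\beta_i)$ (their disjunction will do), and call $\Phi\se\SF{\vf_X}$ \emph{admissible} for $X$ if $r_X(\chi)\in\Phi$ and, for each $i$, $i\in X$ iff $r_X(\beta_i)\in\Phi$. Reading $\Phi$ as $\SFMG{\vf_X}{\mM}{\empc}$ turns the previous characterization into: $\chi$ is satisfiable in $\univ\clF$ iff, for some $X$ and some admissible $\Phi$, the tie $(\vf_X,\Phi,\empc)$ is satisfiable in $\clF$. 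As there are only finitely many pairs $(X,\Phi)$, the first item applied termwise yields the same finite disjunction for $\clG$, giving the second item.

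The main obstacle is this converse direction, and specifically the mismatch between a tie---which pins down the \emph{entire} set $\SFMG{\vf_X}{\mM}{\empc}$ of satisfiable subformulas---and the merely partial constraints produced by a sound guess. Resolving it requires ranging over all admissible $\Phi$ and exploiting that, for a fixed model, $\SFMG{\vf_X}{\mM}{\empc}$ is automatically admissible, so that the partial information carried by $(X,\Phi)$ can always be realized inside a genuine tie; the finiteness of the set of pairs $(X,\Phi)$ is what reduces the statement to finitely many applications of the hypothesis. Both elimination steps (for $t$ and for $r_X$) are straightforward inductions and should carry no hidden difficulty.
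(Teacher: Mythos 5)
Your argument is correct. The paper itself defers the proof to \cite{AiML2018-sums}, but the corresponding explicit constructions appear in Section \ref{Sec:SatviaCSat}, and your two directions line up against them as follows. The passage from ties to satisfiability in $\univ{\clF}$ is essentially the paper's: your translation $t$ is exactly the map $\psi\mapsto\trc{\psi}{\oGamma}$ used in Proposition \ref{prop:CsatToSat-precon}, and your formula $\chi$ is the formula $\delta_m$ of that proof specialized to the $1$-transitive, downward-directed universal frame, i.e.\ the route of Proposition \ref{prop:CsatToSatUniv}. The converse direction is where you genuinely diverge. The paper follows Spaan \cite{SpaanUniv}: it introduces a fresh variable $p_\psi$ for each subformula $\Di_0\psi$, replaces $\Di_0\psi$ by $p_\psi$, and adds conjuncts $(\Di_0\trS{\psi}\leftrightarrow\Box_0 p_\psi)\con(\Box_0 p_\psi\vee\Box_0\neg p_\psi)$ forcing each $p_\psi$ to be a constant recording the global truth value of $\Di_0\psi$; the single resulting formula is then converted into a family of ties $(\xi_\vf,\Phi,\empc)$ with the side condition (\ref{eq:spaan-to-tie}) on $\Phi$. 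You instead guess the global truth values directly as a set $X$, substitute $\top/\bot$, and certify the guess semantically via your soundness condition (note that the two claims ``soundness implies the truth-equivalence'' and ``$X_\mM$ is sound'' must be proved by a single simultaneous induction on subformulas, since the latter uses the former for proper subformulas of each $\beta_i$; your phrasing is consistent with this). Both arguments are sound, and yours is arguably more elementary --- no fresh variables and no biconditional trick --- at the price of a disjunction over the $2^m$ guesses $X$. For the bare equivalence asserted in the proposition this is immaterial, and even for complexity it still yields a $\RedPSP$-reduction; but it loses the stronger, essentially many-one character of the paper's reduction noted in the remark after Proposition \ref{prop:satU_to_CSat}, which is what Spaan's renaming buys.
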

The latter condition means that the logics of $\univ{\clF}$  and $\univ{\clG}$ are equal.
The term `interchangeable' is motivated by Theorem \ref{thm:interch1}.

Hence, the language of ties is as expressible as the modal language with the universal modality.
In Section \ref{Sec:SatviaCSat}, we will present explicit reductions between these languages.

\improve{
\todo{announce the following construction properly - important for the fmp, decidability, etc (see Theorem above).
And will be used in the proof of Theorem main }
}

\begin{definition}
Let $V$ be a set of elements of a model $\mM=(W,(R_a)_{a\in \AlM},\theta)$.
Given a formula $\vf$ and a condition $\oGamma$,
let $\oDelta$ be the condition defined as follows:   for $a\in \AlM$,
$$\oDelta(a)=\oGamma(a)\cup \{\chi\in \SF{\vf}\mid \EE w\in R_a[V]{\setminus}V\;  \cmo{\mM}{w}{\oGamma}{\chi}\}.$$
$\oDelta$ is called the {\em external condition of $V$ in $\mM$ with respect to $\vf$ and $\oGamma$}.
\end{definition}
\begin{remark}
If $\oGamma \subseteq \SF{\vf}^\Al$, then $\oDelta \subseteq \SF{\vf}^\Al$ as well (this will be important in the next
section, where we consider the {\em conditional satisfiability problem}).
\end{remark}
\improve{\todo{Reformulate for sums via Indexes}}
\begin{lemma}[\cite{AiML2018-sums}, Lemma 4.5] \label{psp:lem:extrernal}
Consider a sum of models $\mM=\LSum{\frI}{\mM_i}$, $i$ in $\frI$, and the set $V=\{i\}\times \dom{\mM_i}$.
\improve{Wording: Let ...}
If
$\oDelta$ is the external condition of $V$ in $\mM$ with respect to some given $\vf$, $\oGamma$, then
for all $v$ in $\mM_i$, $\chi$ in $\SF{\vf}$,
\begin{equation}\label{eq:satsumm0-poly-extra-1}
\cmo{\mM}{(i,v)}{\oGamma}{\chi} \quad \tiff \quad \cmo{\mM_i}{v}{\oDelta}{\chi}.
\end{equation}
\end{lemma}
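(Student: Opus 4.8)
The plan is to prove the biconditional~\eqref{eq:satsumm0-poly-extra-1} by induction on the length of $\chi$, uniformly for all $v$ in $\mM_i$. The atomic case is settled by the definition of the sum of models, since $(i,v)\in\theta(p)$ iff $v\in\theta_i(p)$; the case $\chi=\bot$ is vacuous; and the case $\chi=\psi_1\imp\psi_2$ follows immediately from the induction hypothesis, because the truth-under-condition clause for $\imp$ is purely Boolean. Thus the whole content of the lemma lies in the modal step $\chi=\Di_a\psi$, which I treat using the special shape of the successor relation of a sum together with the design of $\oDelta$.

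First I would record the structural fact that drives the argument. By the definition of $R^\Sigma_a$, a point $(i,w)$ of $V$ has two kinds of $a$-successors in $\mM$: the \emph{internal} ones $(i,u)$ with $w\,R_{i,a}\,u$, and the \emph{external} ones $(j,u)$ with $j\ne i$ and $i\,S_a\,j$. The crucial observation is that the external successors do not depend on the chosen point $w$: they are determined only by $i$ and the relation $S_a$. Consequently, for every $w$ in $\mM_i$ the set of external $a$-successors of $(i,w)$ is exactly $R^\Sigma_a[V]\setminus V$ --- precisely the set occurring in the definition of $\oDelta(a)$.

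With this in hand the modal step is routine. Unfolding the definition, $\cmo{\mM}{(i,w)}{\oGamma}{\Di_a\psi}$ holds iff $\psi\in\oGamma(a)$ or some $a$-successor of $(i,w)$ in $\mM$ satisfies $\psi$ under $\oGamma$. Splitting the successors into internal and external, the internal disjunct reads $\EE u\in R_{i,a}(w)\;\cmo{\mM}{(i,u)}{\oGamma}{\psi}$, which by the induction hypothesis equals $\EE u\in R_{i,a}(w)\;\cmo{\mM_i}{u}{\oDelta}{\psi}$. The external disjunct asserts that some point of $R^\Sigma_a[V]\setminus V$ satisfies $\psi$ under $\oGamma$; since $\psi\in\SF{\vf}$, by the definition of the external condition this says exactly that $\psi$ belongs to the set adjoined to $\oGamma(a)$ to form $\oDelta(a)$. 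Hence the external disjunct together with the clause $\psi\in\oGamma(a)$ collapses to the single condition $\psi\in\oDelta(a)$. Combining, $\cmo{\mM}{(i,w)}{\oGamma}{\Di_a\psi}$ holds iff $\psi\in\oDelta(a)$ or $\EE u\in R_{i,a}(w)\;\cmo{\mM_i}{u}{\oDelta}{\psi}$, which by Definition~\ref{def:sat-mod} applied in $\mM_i$ under $\oDelta$ is exactly $\cmo{\mM_i}{w}{\oDelta}{\Di_a\psi}$, closing the induction.

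I expect the only delicate point to be bookkeeping rather than a genuine obstacle: one must check that the external successor set is truly independent of $w$ (so that it coincides with $R^\Sigma_a[V]\setminus V$ for every $w$ simultaneously) and that, because $\SF{\vf}$ is closed under subformulas, every formula $\psi$ encountered in the recursion lies in $\SF{\vf}$, so that the external clause of $\oDelta$ is applicable at each step. Once these are verified, matching the resulting disjunction with the truth-under-condition clause for $\Di_a$ in the summand is immediate.
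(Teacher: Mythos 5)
Your proof is correct; the paper itself only cites this as Lemma 4.5 of \cite{AiML2018-sums} without reproducing the argument, and your induction on the length of $\chi$ --- with the key observation that the external $a$-successors of $(i,w)$ are independent of $w$ and coincide with $R^\Sigma_a[V]\setminus V$, so that the external disjunct merges with $\psi\in\oGamma(a)$ into $\psi\in\oDelta(a)$ --- is exactly the standard argument intended there. The two bookkeeping points you flag (independence from $w$, and closure of $\SF{\vf}$ under subformulas so the external clause applies throughout the recursion) are indeed the only things to check, and both go through.
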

\improve{The proof of Theorem \ref{thm:interch1} is entirely based on this lemma,
see \cite{AiML2018-sums}. -- who cares?
}

Lemma \ref{lem:satsumm-decomp-poly} below is
a particular corollary of Lemma \ref{psp:lem:extrernal}. It will be important for the proofs of our complexity results.

Consider $a\in \AlM$ and models $\mM_0$, $\mM_1$.
The model $\mM_0\plusa\mM_1$
 is obtained from the disjoint union of $\mM_0$ and $\mM_1$ by adding all the pairs of form
$((0,w),(1,v))$ to the $a$-th relation; that is,  in our general notation,
$\mM_0\plusa\mM_1=\LSuma{a}{(2,<)}{\mM_i}$.

For an $\AlM$-condition $\oGamma=(\Gamma_0,\ldots,\Gamma_{\AlM-1})$ and a set of $\AlM$-formulas $\Psi$, we put $\oGamma \cupa \Psi =
(\Gamma'_0,\ldots,\Gamma'_{\AlM-1})$, where $\Gamma'_a=\Gamma_a\cup \Psi$,
and $\Gamma'_b=\Gamma_b$ for $b\neq a$.

\comm{This lemma is great. It should be moved (to a special subsection), announced and explained properly}
\begin{lemma}\label{lem:satsumm-decomp-poly}
For $\AlM$-models $\mM_0, \mM_1$, an $\AlM$-formula $\vf$, an $\AlM$-condition $\oGamma$, and $a\in \AlM$, we
have
\begin{equation}\label{eq:sum-cup-sum}
\SFMG{\vf}{\mM_0\plusa\mM_1}{\oGamma}=\SFMG{\vf}{\mM_0}{\oGamma\cupa\SFMG{\vf}{\mM_1}{\oGamma}}\cup \SFMG{\vf}{\mM_1}{\oGamma}.
\end{equation}
\end{lemma}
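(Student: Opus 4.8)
The plan is to recognize $\mM_0\plusa\mM_1$ as the two-summand sum $\LSuma{a}{(2,<)}{\mM_i}$ and to apply the external-condition lemma (Lemma \ref{psp:lem:extrernal}) once to each component, in the right order. Write $V_0=\{0\}\times\dom{\mM_0}$ and $V_1=\{1\}\times\dom{\mM_1}$, so that the domain of $\mM:=\mM_0\plusa\mM_1$ is the disjoint union $V_0\sqcup V_1$. Since $\SFMG{\vf}{\mM}{\oGamma}$ collects exactly those $\chi\in\SF{\vf}$ that are true under $\oGamma$ at some point of $\mM$, and every point lies in $V_0$ or in $V_1$, it suffices to identify the contributions of the two components separately and then take their union.

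First I would treat $V_1$. In the index frame $(2,<)$ the only nonempty relation is the $a$-th one, namely $<$, and the element $1$ is $<$-maximal; hence no point $(1,v)$ has an $R^\Sigma_b$-successor outside $V_1$ for any $b\in\AlM$. Therefore the external condition of $V_1$ with respect to $\vf,\oGamma$ is $\oGamma$ itself, and Lemma \ref{psp:lem:extrernal} gives $\cmo{\mM}{(1,v)}{\oGamma}{\chi}$ iff $\cmo{\mM_1}{v}{\oGamma}{\chi}$ for all $v$ in $\mM_1$ and $\chi\in\SF{\vf}$. In particular,
$$\{\chi\in\SF{\vf}\mid \EE w\in V_1\ \cmo{\mM}{w}{\oGamma}{\chi}\}=\SFMG{\vf}{\mM_1}{\oGamma}.$$

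Next I would treat $V_0$, using the previous identity. For the modality $a$, every point $(0,w)$ sees all of $V_1$ externally (because $0<1$), while its internal $a$-successors stay inside $V_0$; for $b\neq a$ the $b$-th relation of $(2,<)$ is empty, so there are no external $b$-edges. Thus the external condition $\oDelta$ of $V_0$ coincides with $\oGamma$ in every coordinate $b\neq a$, and in coordinate $a$ it adjoins exactly $\{\chi\in\SF{\vf}\mid\EE w\in V_1\ \cmo{\mM}{w}{\oGamma}{\chi}\}$, which by the previous step equals $\SFMG{\vf}{\mM_1}{\oGamma}$. Hence $\oDelta=\oGamma\cupa\SFMG{\vf}{\mM_1}{\oGamma}$, and Lemma \ref{psp:lem:extrernal} yields $\cmo{\mM}{(0,w)}{\oGamma}{\chi}$ iff $\cmo{\mM_0}{w}{\oGamma\cupa\SFMG{\vf}{\mM_1}{\oGamma}}{\chi}$, so that the set of $\chi$ true somewhere in $V_0$ is precisely $\SFMG{\vf}{\mM_0}{\oGamma\cupa\SFMG{\vf}{\mM_1}{\oGamma}}$.

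Finally, taking the union of the two contributions over the partition $\dom{\mM}=V_0\sqcup V_1$ gives exactly the right-hand side of \eqref{eq:sum-cup-sum}. The only real content is the bookkeeping of external successors for the two-point index: the asymmetry of $(2,<)$ — that $1$ is terminal while $0$ sees the whole upper component through $<$ — is what keeps the condition at $V_1$ equal to $\oGamma$ while the condition at $V_0$ acquires the $a$-augmentation $\SFMG{\vf}{\mM_1}{\oGamma}$, and this is why the lemma must be applied to $V_1$ before $V_0$, since the external condition of $V_0$ refers to truth values in the big model $\mM$ at points of $V_1$. I expect this ordering to be the only subtle point; everything else is a direct substitution into Lemma \ref{psp:lem:extrernal}.
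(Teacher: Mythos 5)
Your proof is correct and follows essentially the same route as the paper's: both apply Lemma \ref{psp:lem:extrernal} to the top part (external condition $\oGamma$) and to the bottom part (external condition $\oGamma\cupa\SFMG{\vf}{\mM_1}{\oGamma}$) and take the union over the two components. Your observation that the top part must be handled first—so that the external condition of $V_0$, defined via truth in the big model at points of $V_1$, can be rewritten as $\SFMG{\vf}{\mM_1}{\oGamma}$—is a detail the paper leaves implicit, and it is handled correctly.
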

\begin{proof}
Let $V$ be the bottom part of the sum $\mM_0\plusa\mM_1$: $V=\{(0,w)\mid w \text{ is in } \mM_0\}$.
Then $$\oDelta = \oGamma\cupa\SFMG{\vf}{\mM_1}{\oGamma}$$ is the
external condition of $V$ w.r.t.  $\vf$ and $\oGamma$.
The external condition of the top part $\{(1,v)\mid v \text{ is in } \mM_1\}$ w.r.t.  $\vf$ and $\oGamma$ is
just $\oGamma$.
By Lemma \ref{psp:lem:extrernal}, we have
for every $w$ in $\mM_0$,  every $v$ in $\mM_1$, and every $\chi\in \SF{\vf}$:
\begin{eqnarray*}
\cmo{\mM_0\plusa\mM_1}{(0,w)}{\oGamma}{\chi} &\quad \tiff\quad &\cmo{\mM_0}{w}{\oDelta}{\chi}\\
\cmo{\mM_0\plusa\mM_1}{(1,v)}{\oGamma}{\chi} &\quad \tiff \quad &\cmo{\mM_1}{v}{\oGamma}{\chi}.
\end{eqnarray*}
Now (\ref{eq:sum-cup-sum}) follows.
\end{proof}

\subsection{Sums over Noetherian orders}
We say that $\frI$  is a {\em Noetherian}  (or {\em converse well-founded}) {\em order} if
$\frI$  is a strict partial order which  has no infinite ascending chains. 
Let $\NPO$ be the class of all non-empty Noetherian orders  (we say that a partial order is non-empty, if its domain is).

A strict partial order $(I,<)$ is called a ({\em transitive irreflexive}) {\em tree}
if it has a least element (the {\em root}) and
for all $i\in I$ the set $\{j\mid j<i\}$ is a finite chain.
Let $\Trf$  be the class of all finite
trees.

Consider a finite tree $\frI=(I,<)$.
The {\em branching of $i$ in $\frI$}, denoted by $\b(i,\frI)$,
is the number of immediate successors of $i$ ({\em $j$ is an immediate successor of $i$}, if $i<j$ and there is no $k$ such that $i<k<j$);
the {\em branching of $\frI$}, denoted by $\b(\frI)$, is $\max{\{\b(i,\frI)\mid i \text{ in }\frI\}}$.
The {\em height of $\frI$}, denoted by $\h(\frI)$, is $\max{\{|V|\mid V \text{ is a chain in }\frI\}}$.
For $h,b\in \omega$, let $\Tr(h,b)$ be the class of all finite trees with height $\leq h$ and branching $\leq b$:
$${\Tr(h,b)=\{\frI\in \Trf \mid \h(\frI)\leq h \,\&\,\b(\frI)\leq b\}.}$$

Let $\bigsqcup \clF$ be the class of frames of form $\bigsqcup_I \frF_i$, where
$I$ is a non-empty set,
$\frF_i\in \clF$ for all $i\in I$, and let
$\Disk{k}{\clF}$ be the class of such frames with $0<|I|\leq k$. Likewise for $\Diskl{k}{\clF}$.

Let $\lvf$ denote the number of subformulas of a formula $\vf$.

\begin{theorem}[\cite{AiML2018-sums}]\label{thm:smll-tree-for-Csat}
Let
$\clF$ be a class of $\AlM$-frames, $a\in\AlM$,
  and $\clI$  a class of Noetherian orders containing all finite trees.
\begin{enumerate}
\item \label{thm:smll-tree-for-Csat-itemSat}
We have
$$\Log \LSuma{a}{\NPO}{\clF} =\Log\LSuma{a}{\clI}{\clF}=\Log\LSuma{a}{\Trf}{\clF}.$$
 Moreover,
for every $\Al$-formula $\vf$ we have:
$$\vf \text{ is satisfiable in }  \LSuma{a}{\NPO}{\clF} \tiff \vf \text{ is satisfiable in }  \LSuma{a}{\Tr(\lvf,\lvf)}{\clF}.$$
\item \label{thm:smll-tree-for-Csat-itemCSat}
Assume that $\clI$ is closed under finite disjoint unions. Then
$$\LSuma{a}{\NPO}{\clF} \equiv \LSuma{a}{\clI}{\clF} \equiv  \Diskl{\omega}{\LSuma{a}{\Trf}{\clF}}.$$ Moreover,
for every $\Al$-tie $\tau=(\vf,\Phi,\oGamma)$ we have:
\begin{equation}\label{eq:main}
\tau \text{ is satisfiable in }  \LSuma{a}{\NPO}{\clF} \tiff \tau \text{ is satisfiable in }  {\Disk{\lvf}\LSuma{a}{\Tr(\lvf,\lvf)}{\clF}}.
\end{equation}
\end{enumerate}
\end{theorem}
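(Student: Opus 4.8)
The plan is to reduce everything to a single direction. Since every finite tree is a finite, hence Noetherian, strict partial order, we have $\Trf \subseteq \clI \subseteq \NPO$, and therefore $\LSuma{a}{\Trf}{\clF} \subseteq \LSuma{a}{\clI}{\clF} \subseteq \LSuma{a}{\NPO}{\clF}$. This immediately gives one chain of inclusions between the logics, and the ``$\Leftarrow$'' directions of the satisfiability/tie equivalences. So the whole content is the opposite direction: satisfiability over an arbitrary Noetherian order must be transferred to satisfiability over a tree of height and branching at most $\lvf$. Once this is proved for formulas and for ties, the displayed logic equalities follow at once (two classes have the same logic iff the same formulas are satisfiable over them), and the $\equiv$-statements follow from the tie version together with the fact that disjoint unions do not change which ties are satisfiable (Proposition~\ref{prop:truth-pres-for-cs}(2)).

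First I would fix a satisfying sum $\mM = \LSuma{a}{\frI}{\mM_i}$ with $\frI \in \NPO$ and a base condition $\oGamma$ (empty in the formula case). Using that $\frI$ is converse well-founded, I define the external conditions $\oDelta_i$ by induction ``from the top'': in the $a$-coordinate $\oDelta_i(a) = \oGamma(a) \cup \bigcup_{i < j} \SFMG{\vf}{\mM_j}{\oDelta_j}$, and $\oDelta_i(b) = \oGamma(b)$ otherwise. Converse well-foundedness makes this recursion well-defined (it is just the iterated form of the two-level decomposition of Lemma~\ref{lem:satsumm-decomp-poly}), and by Lemma~\ref{psp:lem:extrernal} it captures truth inside each summand: $\cmo{\mM}{(i,v)}{\oGamma}{\chi} \iff \cmo{\mM_i}{v}{\oDelta_i}{\chi}$ for all $\chi \in \SF{\vf}$.

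The heart of the argument is a selective filtration building a finite index tree $\frT$. Starting from the index realizing the target formula, I process each selected $i$ as follows: for every $\chi$ in the pending set $\oDelta_i(a) \setminus \oGamma(a) \subseteq \SF{\vf}$ I attach, as a child of $i$, one index $j > i$ that is \emph{maximal} among those with $\chi \in \SFMG{\vf}{\mM_j}{\oDelta_j}$ (existing by converse well-foundedness). Reusing one child for several pending formulas, $i$ has at most $|\SF{\vf}| = \lvf$ children, so $\b(\frT) \le \lvf$. Along any branch the pending set strictly shrinks: passing from $i$ to a child $j$ we have $\oDelta_j(a) \subseteq \oDelta_i(a)$ by transitivity, while the witnessed $\chi$ leaves the set by maximality of $j$; hence $\h(\frT) \le \lvf$ and $\frT \in \Tr(\lvf,\lvf)$. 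The summands are left untouched, so they stay in $\clF$.

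What remains is correctness, which I expect to be the main obstacle: one must show that the external condition computed \emph{inside} $\LSuma{a}{\frT}{\mM_i}$ coincides with the original $\oDelta_i$ at every node. I would prove this invariant by induction on $\frT$ from the leaves down; the ``$\subseteq$'' inclusion holds because tree-children are genuine $\frI$-successors, and ``$\supseteq$'' is exactly what the maximal-witness choice guarantees. Given the invariant, Lemma~\ref{psp:lem:extrernal} applied in both models yields $\cmo{\LSuma{a}{\frT}{\mM_i}}{(i,v)}{\oGamma}{\chi} \iff \cmo{\mM}{(i,v)}{\oGamma}{\chi}$ at every realized node, so truth of $\vf$ transfers and nothing outside the original characterization is created. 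For ties I would run this once per $\chi \in \Phi$ and take the disjoint union of the (at most $\lvf$, since $|\Phi| \le \lvf$) resulting tree-sums; by Proposition~\ref{prop:truth-pres-for-cs}(2) its characterization under $\oGamma$ is exactly $\bigcup_{\chi \in \Phi}\SFMG{\vf}{\mM^\chi}{\oGamma} = \Phi$, placing the witness in $\Disk{\lvf}\LSuma{a}{\Tr(\lvf,\lvf)}{\clF}$ and explaining the disjoint-union prefix. The remaining identities for $\clI$ and for $\Diskl{\omega}{\LSuma{a}{\Trf}{\clF}}$ then follow by combining this with the closure of $\clI$ under finite disjoint unions and the associativity/disjoint-union facts of Theorem~\ref{thm:basicTrPr}.
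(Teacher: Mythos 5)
The paper itself does not prove this theorem: it is imported from \cite{AiML2018-sums} (Theorem 5.2(i) there), and the only argument supplied here is the remark that item (1) follows from item (2) via Proposition \ref{prop:sat_via_cSat}. Your reconstruction --- compute the external conditions $\oDelta_i$ by converse-well-founded recursion, unravel a small witnessing tree by attaching, for each pending formula, a maximal index realizing it, and verify by a leaves-down induction that the external conditions are reproduced inside the tree-sum so that Lemma \ref{psp:lem:extrernal} transfers truth --- is exactly the selective-filtration argument one expects behind this statement, and its skeleton is sound: the identification of $\oDelta_i(a)$ with $\oGamma(a)\cup\bigcup_{j>i}\SFMG{\vf}{\mM_j}{\oDelta_j}$, the branching bound, the correctness invariant (using transitivity for $\subseteq$ and the maximal-witness choice for $\supseteq$), and the passage from single trees to ties via disjoint unions together with Theorem \ref{thm:basicTrPr} are all in order.

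There is one concrete quantitative gap: with the pending set taken to be all of $\oDelta_i(a)\setminus\oGamma(a)\subseteq\SF{\vf}$, your height estimate comes out as $\lvf+1$, not $\lvf$. The root's pending set can have size $\lvf$, and each edge removes only one formula from it, so a branch may contain $\lvf+1$ nodes; this places the witness in $\Tr(\lvf+1,\lvf)$ rather than the stated $\Tr(\lvf,\lvf)$, and the latter bound is what gets fed into the algorithms later in the paper. Two standard repairs: (i) choose each root $i_\chi$ to be \emph{maximal} among the indices realizing the target formula $\chi$, so that $\chi$ is already absent from the root's pending set; or, better, (ii) note that by Definition \ref{def:sat-mod} the $a$-th component of a condition is only consulted when evaluating subformulas of the form $\Di_a\psi$, so it suffices to witness pending formulas in $\{\psi\mid\Di_a\psi\in\SF{\vf}\}$, a set of size at most $\lvf-1$. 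Either repair yields height $\le\lvf$. A last triviality: when $\Phi=\emp$ your recipe produces an empty disjoint union, which is not allowed; take instead a single one-node tree-sum over any maximal index, whose realized set is automatically contained in $\Phi$.
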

In view of Proposition \ref{prop:sat_via_cSat}, the first statement
of the theorem is a corollary of the second statement. For the key equivalence (\ref{eq:main}), see
Theorem 5.2(i) in  \cite{AiML2018-sums}.

Theorem \ref{thm:smll-tree-for-Csat} will be the crucial semantic tool for the complexity results.

\section{Complexity}\label{sec:compl}
The main goal of this section is to show that the modal satisfiability problem on sums over Noetherian orders
is polynomial space Turing reducible to the modal satisfiability problem on summands.

For problems $A$ and $B$,  we put $A\RedPSP B$ if
there exists a polynomial space bounded oracle deterministic machine $M$ with oracle $B$ that decides $A$
\cite{SimonGill1977} (it is assumed that every tape of $M$, including the oracle tape, is polynomial space bounded).

\begin{theorem}\label{thm:main_compl_res}
Let $a <\AlM<\omega$, $\clF$ a class of $\AlM$-frames, and
  $\clI$  a class of Noetherian orders containing all finite trees.
  Then:
  \begin{enumerate}
\item   \label{thmItem:main_compl_res-1}
$\Sat{\LSuma{a}{\clI}{\clF}} \RedPSP   \Sat{\univ{\clF}}$.
\item   \label{thmItem:main_compl_res-2}
If also $\clI$ is closed under finite disjoint unions, then
$\Sat{\univ{(\LSuma{a}{\clI}{\clF})}} \RedPSP    \Sat{\univ{\clF}}$.
\end{enumerate}
\end{theorem}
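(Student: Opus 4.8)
The plan is to reduce, via Theorem~\ref{thm:smll-tree-for-Csat}, the satisfiability question on sums over $\clI$ to satisfiability over finite trees of height and branching bounded by $\lvf$, and then to decide the latter by a recursion on the tree height whose stack depth is $O(\lvf)$. The engine of the recursion is the decomposition of a sum over a tree at its root. If $\frI$ is a finite tree with root $r$ and the subtrees hanging at the immediate successors of $r$ are $\frI_1,\dots,\frI_k$, then, combining the associativity and disjoint-union isomorphisms of Theorem~\ref{thm:basicTrPr}(\ref{item3:basicTrPr}) with Definition~\ref{def:asum}, one obtains
$$\LSuma{a}{\frI}{\frF_i}\ \isom\ \frF_r\plusa\Bigl({\bigsqcup}_{m=1}^{k}\LSuma{a}{\frI_m}{\frF_i}\Bigr),$$
where $\frF_r$ is the root summand. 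Applying Lemma~\ref{lem:satsumm-decomp-poly} to the outer $\plusa$ and Proposition~\ref{prop:truth-pres-for-cs}(2) to the disjoint union, I get, for the characterization set of the whole sum under a condition $\oGamma$,
$$\SFMG{\vf}{\LSuma{a}{\frI}{\frF_i}}{\oGamma}=\SFMG{\vf}{\frF_r}{\oGamma\cupa\Psi}\cup\Psi,\quad\text{where } \Psi={\textstyle\bigcup}_{m}\SFMG{\vf}{\LSuma{a}{\frI_m}{\frF_i}}{\oGamma}.$$
Since each $\frI_m$ has height one less and $k\le\lvf$, this is exactly one step of the intended recursion, and all conditions and characterization sets stay inside $\SF{\vf}$ (by the Remark following the external-condition definition), hence are of size polynomial in $\lvf$.

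This suggests two mutually recursive predicates on a fixed formula $\vf$ (write $n=\lvf$). For $1\le h\le n$, a set $\Phi\subseteq\SF{\vf}$, and a condition $\oGamma\subseteq\SF{\vf}^\Al$, let $A_h(\Phi,\oGamma)$ assert that the tie $(\vf,\Phi,\oGamma)$ is satisfiable in $\LSuma{a}{\Tr(h,n)}{\clF}$, and let $B_h(\Phi,\oGamma)$ assert it is satisfiable in $\Disk{n}{\LSuma{a}{\Tr(h,n)}{\clF}}$. The displayed decomposition yields
$$A_h(\Phi,\oGamma)\iff \exists\,\Psi\subseteq\Phi\ \bigl[(\Psi=\emp\ \lor\ (h>1\,\&\,B_{h-1}(\Psi,\oGamma)))\ \&\ \exists\frF\in\clF\ \SFMG{\vf}{\frF}{\oGamma\cupa\Psi}\cup\Psi=\Phi\bigr],$$
while, because the characterization set of a disjoint union is the union of the characterization sets and $|\Phi|\le n$, $B_h(\Phi,\oGamma)$ holds iff every $\psi\in\Phi$ lies in some $A_h$-satisfiable $\Phi'\subseteq\Phi$ (covering $\Phi$ by at most $n$ pieces, within the branching bound). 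Soundness and completeness of these equivalences are read off directly from the decomposition isomorphism together with Lemma~\ref{lem:satsumm-decomp-poly} and Proposition~\ref{prop:truth-pres-for-cs}.

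The only nonrecursive clause is $\exists\frF\in\clF\colon\SFMG{\vf}{\frF}{\oGamma'}=\Phi$ (with $\oGamma'=\oGamma\cupa\Psi$), and this is where the oracle is used. I translate the condition away by setting $t_{\oGamma'}(\Di_a\chi)=\Di_a t_{\oGamma'}(\chi)\lor c$, where $c=\top$ precisely when $\chi$ belongs to the $a$-component of $\oGamma'$ (and $t_{\oGamma'}$ commutes with the Booleans and fixes variables), so that $\cmo{\mM}{w}{\oGamma'}{\chi}\iff\mM,w\mo t_{\oGamma'}(\chi)$ by induction. Letting $\Di_u$ denote the universal modality of $\univ{\clF}$, a summand realizing exactly the characterization $\Phi$ exists iff
$$\bigwedge_{\psi\in\Phi}\Di_u t_{\oGamma'}(\psi)\ \wedge\ \bigwedge_{\psi\in\SF{\vf}\setminus\Phi}\neg\,\Di_u t_{\oGamma'}(\psi)$$
is satisfiable in $\univ{\clF}$; this is a single oracle call on a $(1+\AlM)$-formula of size $\BigO(n^2)$, which is where the universal modality in the oracle is essential.

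It remains to assemble the machine. For part~(\ref{thmItem:main_compl_res-1}), given $\vf$ I invoke Theorem~\ref{thm:smll-tree-for-Csat}(\ref{thm:smll-tree-for-Csat-itemSat}) (using $\Trf\subseteq\clI\subseteq\NPO$) to pass to $\LSuma{a}{\Tr(n,n)}{\clF}$, then Proposition~\ref{prop:sat_via_cSat} to reduce to deciding whether $A_n(\Phi,\empc)$ holds for some $\Phi\ni\vf$, iterating over all such $\Phi$. For part~(\ref{thmItem:main_compl_res-2}) I first pass from a universal-modality formula to an equivalent tie (the polynomial reduction announced in Section~\ref{Sec:SatviaCSat}), then use Theorem~\ref{thm:smll-tree-for-Csat}(\ref{thm:smll-tree-for-Csat-itemCSat}) (now also using closure of $\clI$ under finite disjoint unions) to reduce tie-satisfiability to $B_n(\Phi,\oGamma)$. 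The hard part is the space accounting: the recursion alternates $A_h\to B_{h-1}\to A_{h-1}\to\cdots$, so the height parameter strictly decreases along the call stack and the depth is at most $2n$; each frame stores only $h$, a subset of $\SF{\vf}$, a condition inside $\SF{\vf}^\Al$, and counters, all polynomial in $n$; the exponentially many choices of $\Psi$, of covering pieces, and of the outer $\Phi$ are explored sequentially with space reused; and every oracle query writes a polynomial-size formula. Thus the whole computation runs in polynomial space with oracle $\Sat{\univ{\clF}}$, which is exactly what is claimed. This bound is available only because Theorem~\ref{thm:smll-tree-for-Csat} caps both the height and the branching of the witnessing trees by $\lvf$, keeping the recursion shallow and every datum on the stack a subset of $\SF{\vf}$.
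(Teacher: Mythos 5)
Your proposal is correct and follows essentially the same route as the paper: reduce to sums over trees of height and branching at most $\lvf$ via Theorem~\ref{thm:smll-tree-for-Csat}, recurse on tree height using the root decomposition $\frF_r\plusa(\bigsqcup_m\cdot)$ (the paper's Lemmas~\ref{psp:lem:satsumm-decomp-wide-poly}--\ref{lem:psp:SatSumTree-poly} and Algorithm~\ref{alg:SatSumFa-poly}), translate conditions away with the $\trc{\cdot}{\oGamma}$-style map and query $\Sat{\univ{\clF}}$ with the exact-characterization formula (Propositions~\ref{prop:CsatToSat-precon}--\ref{prop:CsatToSatUniv}), and handle part~(\ref{thmItem:main_compl_res-2}) via Spaan's reduction (Proposition~\ref{prop:satU_to_CSat}). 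The only difference is presentational: the paper factors everything through the explicitly named intermediate problem $\CSat{\clF}$, whereas you inline that machinery.
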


This theorem will be proven in Section \ref{sec:proofOfMain}.
For technical reasons, first we will address complexity of the {\em conditional satisfiability problem}.
Let $\Al$ be finite, and let
$\clF$ be a class of $\Al$-frames.
We shall be interested in whether a given tie  $(\vf,\Phi,\oGamma)$
is satisfiable in $\clF$.
The following simple observation
shows that, w.l.o.g.,
we may assume that
every $\oGamma(a)$, $a<\Al$,
consists of subformulas of $\vf$, and hence that $\oGamma$
is a finite sequence of finite sets:

\begin{proposition}\label{psp:prop:SubfCond}
A tie $(\vf,\Phi,\oGamma)$ is satisfiable in a class $\clF$ iff
$(\vf,\Phi,(\SF{\vf}\cap \oGamma(a))_{a\in \Al})$
is.
\end{proposition}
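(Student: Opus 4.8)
The plan is to reduce everything to a single statement: that the two conditions $\oGamma$ and $\oGamma' := (\SF{\vf}\cap\oGamma(a))_{a\in\Al}$ induce the same truth-under-condition relation on subformulas of $\vf$. Since the two ties in question share the same formula $\vf$, the same candidate set $\Phi$, and are tested against the same class $\clF$, satisfiability of either one in $\clF$ means exactly the existence of a model $\mM$ on a frame of $\clF$ with $\Phi$ equal to the corresponding $\SFMG{\vf}{\mM}{\cdot}$. Hence it suffices to show that for every such model, $\SFMG{\vf}{\mM}{\oGamma} = \SFMG{\vf}{\mM}{\oGamma'}$; and by the definition of $\SFMG{\vf}{\mM}{\cdot}$ this follows once we know that $\cmo{\mM}{w}{\oGamma}{\psi}$ iff $\cmo{\mM}{w}{\oGamma'}{\psi}$ for every $w$ in $\mM$ and every $\psi\in\SF{\vf}$.

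I would prove this last equivalence by induction on the length of $\psi$, following the clauses of Definition \ref{def:sat-mod}. The cases $\psi=\bot$ and $\psi=p$ are immediate, as the truth of $\bot$ and of a variable does not depend on the condition at all. For $\psi=\chi_1\imp\chi_2$ both immediate subformulas lie in $\SF{\vf}$ (the subformula relation is transitive, so subformulas of a member of $\SF{\vf}$ are again in $\SF{\vf}$), and the equivalence follows directly from the two induction hypotheses together with the Boolean clause.

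The only case that actually consults the condition is $\psi=\Di_a\chi$. Here the clause reads: $\cmo{\mM}{w}{\oGamma}{\Di_a\chi}$ iff $\chi\in\oGamma(a)$ or $\EE v\in R_a(w)\ \cmo{\mM}{v}{\oGamma}{\chi}$. Since $\Di_a\chi\in\SF{\vf}$, its immediate subformula $\chi$ is again in $\SF{\vf}$; consequently the membership test $\chi\in\oGamma(a)$ has the same outcome as $\chi\in\SF{\vf}\cap\oGamma(a)=\oGamma'(a)$. Combining this with the induction hypothesis applied to $\chi$ at each $v\in R_a(w)$, the right-hand side of the modal clause has the same truth value under $\oGamma$ and under $\oGamma'$, which gives the equivalence for $\psi$ and closes the induction.

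The statement has essentially no obstacle: its entire content is the observation that, when $\SFMG{\vf}{\mM}{\cdot}$ is computed, the condition is only ever queried on formulas $\chi$ that occur as immediate subformulas of members of $\SF{\vf}$, hence themselves belong to $\SF{\vf}$; so elements of $\oGamma(a)$ outside $\SF{\vf}$ are never inspected and may be discarded. The one point requiring (trivial) care is to run the induction over $\SF{\vf}$ and to invoke closure of $\SF{\vf}$ under subformulas at each step, so that the induction hypothesis is always available for the immediate subformulas.
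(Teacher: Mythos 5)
Your proof is correct and follows essentially the same route as the paper: the paper's proof also reduces the claim to the observation that two conditions agreeing on $\SF{\vf}$ componentwise induce the same relation $\cmo{\mM}{w}{\cdot}{\chi}$ for $\chi\in\SF{\vf}$ (calling it ``immediate from Definition \ref{def:sat-mod}''), and you have simply spelled out the induction, correctly isolating the $\Di_a\chi$ clause and the closure of $\SF{\vf}$ under subformulas as the only points where anything happens.
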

\begin{proof}
It is immediate from Definition \ref{def:sat-mod} that for any conditions $\oGamma, \oDelta$ such that
\begin{center}
$\oGamma(a)\cap\SF{\vf}=\oDelta(a)\cap\SF{\vf}$ for all $a\in \AlM$,
\end{center}
we have
$$\cmo{\mM}{w}{\oGamma}{\chi} \tiff  \cmo{\mM}{w}{\oDelta}{\chi}$$
for every model $\mM$, every $w$ in $\mM$, and every $\chi\in \SF{\vf}$.

The statement of the proposition is a particular case of this observation, where
$\oDelta(a)=\SF{\vf}\cap \oGamma(a)$ for all $a\in \Al$.
\end{proof}

The
{\em conditional satisfiability problem on $\clF$} is to decide
whether
a
given tie $(\vf,\Phi,\oGamma)$
such that
$\oGamma \subseteq \SF{\vf}^\Al$ is
satisfiable in $\clF$.
In symbols, $\CSat{\clF}$ is the set
$$
\{(\vf,\Phi,\oGamma)\mid
\vf \in \ML(\AlA) \;\&\;
\Phi\subseteq \SF{\vf}\;\&\;
\oGamma\subseteq \SF{\vf}^\Al\;\&\;
\text{ the tie }
(\vf,\Phi,\oGamma)
 \text{ is satisfiable in } \clF\}.
$$

In Section \ref{subs:arithm} we will
describe a decision procedure for the conditional satisfiability problem on sums over Noetherian orders $\LSuma{a}{\NPO}{\clF}$
with the oracle $\CSat{\clF}$. Next, in Section \ref{Sec:SatviaCSat}, we will describe reductions between
$\CSat{\clF}$ and $\Sat{\univ{\clF}}$, which will complete the proof of Theorem \ref{thm:main_compl_res}.

\improve{

\improve{``or every frame in $\clI$ is rooted'' - as a remark}

\todo{Move it:
\comm{Somewhere - when $$\Sat{\univ{\clF}}$$ is equivalent to $$\Sat{\clF}$$}

\begin{corollary}
Let $\clF$ be a non-empty class of $\AlM$-frames such that ..., $a \in \AlM$, and let $\Trf\subseteq \clI \subseteq   \NPO$.
Then $\Sat{\LSuma{a}{\clI}{\clF}} \in \PSPACE( \Sat{\clF} )$.
If also $\clI$ is closed under finite disjoint sums \comm{Exclude?: or every frame in $\clI$ is rooted}, then
$\Sat{\LSuma{a}{\clI}{\clF}} \in \PSPACE( \Sat{\clF})$.
\end{corollary}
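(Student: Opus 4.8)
The plan is to obtain the corollary from Theorem~\ref{thm:main_compl_res} by supplying the single missing link. Part~(\ref{thmItem:main_compl_res-1}) of that theorem already gives $\Sat{\LSuma{a}{\clI}{\clF}} \RedPSP \Sat{\univ{\clF}}$, so all that separates us from the corollary is the passage from satisfiability \emph{with} the universal modality, $\Sat{\univ{\clF}}$, to plain satisfiability $\Sat{\clF}$. This is precisely what the elided hypothesis on $\clF$ should buy, and I would state that hypothesis as: \emph{the universal box is definable over $\clF$}, i.e.\ there is a fixed formula $\delta(p)\in\ML(\AlM)$ with a single occurrence of $p$ such that, for every model $\mM$ on a frame in $\clF$ and every $w$ in $\mM$,
\begin{equation*}
\mM, w \mo \delta(p) \quad \tiff \quad \mM, v \mo p \text{ for all } v \text{ in } \mM.
\end{equation*}
This holds in the motivating cases: for $\LS4$, where $\clF$ is the class of clusters, one takes $\delta(p)$ to be the box of the cluster relation (which, being $W\times W$, defines the universal box); for $\GLP$, where $\clF$ is the irreflexive singleton, one takes $\delta(p)=p$.

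Under this hypothesis I would produce a polynomial-time many-one reduction $\Sat{\univ{\clF}}\RedKarp\Sat{\clF}$. Since $\univ{\frF}$ carries the universal relation at index $0$ and the original relations at indices $1,\dots,\AlM$, define a translation $(\cdot)^\circ\colon \ML(1{+}\AlM)\to\ML(\AlM)$ that fixes $\bot$ and variables, commutes with $\imp$, reindexes the genuine modalities by $(\Di_{b+1}\psi)^\circ=\Di_b\psi^\circ$ for $b<\AlM$, and unfolds the universal ones by $(\Box_0\psi)^\circ=\delta(\psi^\circ)$ and $(\Di_0\psi)^\circ=\neg\delta(\neg\psi^\circ)$. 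Because $\delta$ has a single occurrence of $p$, the size of $\vf^\circ$ is linear in that of $\vf$, so $\vf\mapsto\vf^\circ$ is polynomial-time computable. A routine induction on $\vf$, invoking the defining property of $\delta$ at each universal step, shows that for a frame $\frF\in\clF$, a model $\mM$ on $\frF$, and the model $\nu$ on $\univ{\frF}$ with the same valuation, $\mM,w\mo\vf^\circ \tiff \nu,w\mo\vf$; hence $\vf$ is satisfiable in $\univ{\clF}$ iff $\vf^\circ$ is satisfiable in $\clF$, as required.

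Finally I would compose the two reductions. A polynomial-time many-one reduction is in particular a polynomial-space Turing reduction, and $\RedPSP$ is transitive: given machines witnessing $A\RedPSP B$ and $B\RedPSP C$, one replaces each (polynomially bounded) oracle query to $B$ by a simulation of the second machine, which stays within polynomial space. Chaining $\Sat{\LSuma{a}{\clI}{\clF}}\RedPSP\Sat{\univ{\clF}}\RedKarp\Sat{\clF}$ then yields $\Sat{\LSuma{a}{\clI}{\clF}}\in\PSPACE(\Sat{\clF})$, the first assertion. For the second, the closure of $\clI$ under finite disjoint unions lets me invoke Theorem~\ref{thm:main_compl_res}(\ref{thmItem:main_compl_res-2}), giving $\Sat{\univ{(\LSuma{a}{\clI}{\clF})}}\RedPSP\Sat{\univ{\clF}}$, and composing with the same reduction yields membership in $\PSPACE(\Sat{\clF})$ for the universal-modality version as well. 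The only real obstacle is keeping the translation polynomial so that the oracle tape stays polynomially bounded — this is exactly why $\delta$ must have a single occurrence of $p$, since nested universal modalities would otherwise blow $\vf^\circ$ up exponentially; the restriction is harmless, as every class arising in the applications admits such a $\delta$.
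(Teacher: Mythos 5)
Your derivation is sound under the hypothesis you supplied, but you have instantiated the elided condition ``\dots'' differently from the paper, and the resulting route is genuinely different. The paper's realized version of this statement is Corollary \ref{thm:main_compl_resConical}, where the missing hypothesis is that $\clF$ is \emph{preconical} (closure under cones, uniform $m$-transitivity, downward directedness of $R_\frF^*$), and the last-mile passage to plain satisfiability is Proposition \ref{prop:CsatToSat-precon}: rather than reducing $\Sat{\univ{\clF}}$ to $\Sat{\clF}$, the paper reduces $\CSat{\clF}$ to $\Sat{\clF}$ (composing directly with Theorem \ref{thm:main_compl_resCSat}, which already lands in $\CSat{\clF}$, so your detour through Theorem \ref{thm:main_compl_res} and $\Sat{\univ{\clF}}$ adds one unnecessary round trip), by packing an entire tie $(\vf,\vect,\vectU)$ into the single formula $\delta_m(\vf,\vect,\vectU)$: its satisfaction at one point --- supplied by downward directedness, with reachability expressed by $\Di^{\leq m}$ via $m$-transitivity and made exhaustive by cone-closure --- captures the global satisfiability pattern \emph{without ever defining the universal box pointwise}. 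Your condition, pointwise definability of $\Box_0$ over $\clF$ by a single-occurrence $\delta(p)$, buys more where it applies (a linear-size many-one translation, so $\RedKarp$ rather than merely $\RedPSP$ for the final step) and does cover the headline cases: clusters for $\LS{4}$, the singleton summands behind $\J$ and $\GLP$, and $\univ{\QOf}$ for refinements. But it is strictly narrower on the paper's own examples. For $\clF_{\neq}$ in Example \ref{ex:wk4compl} the natural definition $p\wedge\Box p$ has two occurrences of $p$, and no single-occurrence definition exists: on the irreflexive singleton in $\clF_{\neq}$ any occurrence of $p$ under a modality is vacuous, so the unique occurrence must be unmodalized, and then $\delta(p)$ at $w$ depends on $p$ only via $p(w)$, which cannot express ``$p$ everywhere'' on a two-element frame in $\clF_{\neq}$. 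Moreover, preconical classes such as rooted finite preorders admit no definition of the universal modality at all (a bisimulation argument with two maximal clusters), yet Corollary \ref{thm:main_compl_resConical} applies to them. Two further remarks: your single-occurrence restriction can be lifted with a tool already in the paper --- first flatten the input via Spaan's renaming (\ref{eq:spaan-univ}), which eliminates nesting of the index-$0$ modalities, after which even a multi-occurrence $\delta$ yields a polynomial translation, recovering $\clF_{\neq}$; but even with this repair, the $\CSat$-based hypothesis of the paper remains the more general one, since it works where the universal modality is simply inexpressible over the summands.
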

}

\improve{
\begin{corollary}
\comm{iterated}
\end{corollary}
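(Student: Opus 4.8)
The plan is to read the intended corollary as a statement about \emph{iterated} sums. Fix $\AlM<\omega$, a class $\clF$ of $\AlM$-frames, modalities $a_1,\dots,a_n<\AlM$, and classes $\clI_1,\dots,\clI_n$ of Noetherian orders, each containing all finite trees and closed under finite disjoint unions. Setting $\clC_0=\clF$ and $\clC_k=\LSuma{a_k}{\clI_k}{\clC_{k-1}}$ for $1\le k\le n$, the corollary should assert
$$\Sat{\univ{\clC_n}} \RedPSP \Sat{\univ{\clF}},\qquad\text{and in particular}\qquad \Sat{\clC_n} \RedPSP \Sat{\univ{\clF}}.$$
Since each $\clC_k$ is again a class of $\AlM$-frames (the $a_k$-sum operation does not change the alphabet, as the index frame enters only through a single relation), the construction is well-typed and can be iterated.

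First I would observe that the universal-modality version, Theorem \ref{thm:main_compl_res}(\ref{thmItem:main_compl_res-2}), is exactly the right inductive invariant: applied with $\clF:=\clC_{k-1}$, $\clI:=\clI_k$, $a:=a_k$ (legitimate since $\clI_k$ is closed under finite disjoint unions) it yields, for every $1\le k\le n$,
$$\Sat{\univ{\clC_k}}=\Sat{\univ{(\LSuma{a_k}{\clI_k}{\clC_{k-1}})}} \RedPSP \Sat{\univ{\clC_{k-1}}}.$$
This produces a chain of $n$ reductions telescoping down to $\Sat{\univ{\clC_0}}=\Sat{\univ{\clF}}$, which gives the first conclusion. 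For the plain-satisfiability conclusion I would peel off the outermost layer using Theorem \ref{thm:main_compl_res}(\ref{thmItem:main_compl_res-1}) instead, obtaining $\Sat{\clC_n}\RedPSP\Sat{\univ{\clC_{n-1}}}$ (which does not even require $\clI_n$ to be closed under disjoint unions), and then splice in the chain above.

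The one genuine obstacle is that a chain of reductions does not close into a single reduction for free: I must verify that $\RedPSP$ is transitive and that composing a \emph{fixed} number $n$ of such reductions stays within polynomial space. Transitivity is the standard composition argument: given $A\RedPSP B$ via a poly-space oracle machine $M_1$ and $B\RedPSP C$ via $M_2$, build $M_3$ with oracle $C$ that simulates $M_1$ and, on each query $q$ to $B$, runs $M_2$ on $q$ against $C$ to compute the answer in place. Because every tape of $M_1$ (including its oracle tape) is polynomial-space bounded, each query satisfies $|q|\le p_1(|x|)$, so the nested simulation of $M_2$ uses space $p_2(|q|)\le p_2(p_1(|x|))$, polynomial in $|x|$; reusing this workspace across successive queries keeps the total space polynomial, and the oracle tape for $C$ remains polynomial-space bounded. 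Hence $A\RedPSP C$. Iterating this $n-1$ times — with $n$ a constant of the statement, not part of the input — composes the bounding polynomials without leaving $\PSPACE$, which closes the proof.

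Finally I would remark that this mechanism underlies the $\PSPACE$ bounds for iterated constructions such as Japaridze's logic $\GLP$: there the innermost summand class $\clF$ is an irreflexive singleton, for which $\Sat{\univ{\clF}}$ is decidable in $\PSPACE$ outright, so the chain of reductions places $\Sat{\univ{\clC_n}}$ in $\PSPACE$ as well.
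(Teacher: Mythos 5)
Your proposal is correct and takes essentially the same route as the paper: the paper derives its iterated corollary (Corollary \ref{cor:iteratedPSpace}) precisely by unfolding $\LSuma{\vct{a}}{\clI}{\clF}$ as $\LSuma{a_0}{\clI}{\LSuma{(a_1,\ldots,a_{s-1})}{\clI}{\clF}}$ and applying Theorem \ref{thm:main_compl_res}(\ref{thmItem:main_compl_res-2}) level by level, exactly your telescoping induction. Your explicit verification that $\RedPSP$ composes (using the polynomially bounded oracle tape) and your remark on peeling the outermost layer with part (\ref{thmItem:main_compl_res-1}) make precise steps the paper leaves implicit, but they do not constitute a different argument.
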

}

\improve{Words: main technical tool (CSat-procedure)}

\todo{Illustrations} The classes ..... satisfy the assumptions of the theorem.

\comm{$\zervf?$ $(0)_{\lvf\times\alpha}$}

\todo{The place for the CSat and Proposition? No, later, with the formal notation for vectors.}

\comm{Keeps more information; universal modality? equivalences from AiML? }

}
\subsection{Arithmetic of conditional satisfiability}\label{subs:arithm}


It will be convenient to
encode subformulas of a given $\vf$ as Boolean vectors of length $\lvf$,
considered as characteristic functions on $\SF{\vf}$.
For $\Al\leq \omega$, the set
of modal formulas
is linearly ordered by a polynomial time
computable
relation $\ordforms$ such
that if $\psi$ is a subformula of $\vf$,
then $\vf \ordforms \psi$
(e.g.,
put
$\vf \ordforms \psi$ if
$\psi$ is shorter than $\vf$,
and assume that formulas of the same length are ordered lexicographically).
Let $(\psi_0,\ldots,\psi_{\lvf-1})$ be the $\ordforms$-chain of all subformulas of $\vf$ (hence, $\psi_0=\vf$);
for $\vect\in 2^\lvf$, we write $\fv{\vf}{\vect}$ for $\{\psi_i\mid \vect(i)=1\}$;
similarly, a sequence $\vectU=(\vectu_a)_{a\in\Al}$ of such vectors represents
the condition $\oGamma=(\fv{\vf}{\vectu_a})_{a\in\Al}$.
Hence, for a finite $\Al$, every tie $\tau=(\vf,\Phi,\oGamma)$ with $\oGamma \subseteq \SF{\vf}^\Al$ is
represented by a triple $\tau'=(\vf,\vect,(\vectu_a)_{a\in \Al})$, where $\vect,\vectu_0,\ldots \vectu_{\Al-1}\in 2^\lvf$;
this triple is also called a {\em tie}. In this case, by the satisfiability of $\tau'$ we mean the satisfiability of $\tau$.
\improve{Formally, $\tau'\in\CSat\clF$ in undefined.}

Let $\zervf$ denote the sequence of length $\Al$ of zero vectors of length  $\lvf$ (that is, $\zervf$ represents the condition,
consisting of empty sets).
In view of Proposition \ref{psp:prop:SubfCond}, we have
the following reformulation of Proposition \ref{prop:sat_via_cSat}:
\begin{proposition}\label{prop:sat_via_cSat-vect}
$\vf$ is satisfiable in $\clF$ iff there exists $\vect\in2^\lvf$
 such that $\vect(0)=1$ and the tie $(\vf,\vect,\zervf)$
is satisfiable in $\clF$.
\end{proposition}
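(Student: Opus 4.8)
The plan is to read the statement as a direct transcription of Proposition~\ref{prop:sat_via_cSat} into the Boolean-vector encoding fixed in the paragraph above, so that no new semantic content is involved; the only work is to check that the encoding faithfully matches the set-theoretic data of that proposition. First I would unwind the encoding on both sides of the claimed equivalence. By the convention that $(\psi_0,\ldots,\psi_{\lvf-1})$ is the $\ordforms$-chain of all subformulas of $\vf$ with $\psi_0=\vf$, a vector $\vect\in 2^\lvf$ encodes the set $\fv{\vf}{\vect}\subseteq\SF{\vf}$, and the requirement $\vect(0)=1$ is, by that very convention, equivalent to $\vf\in\fv{\vf}{\vect}$. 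Likewise, the sequence $\zervf$ of zero vectors encodes the condition $\empc$ consisting of empty sets, so the tie $(\vf,\vect,\zervf)$ written in vector notation is \emph{by definition} the tie $(\vf,\fv{\vf}{\vect},\empc)$ in the original notation, and the word ``satisfiable in $\clF$'' means the same for both.

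Putting these translations together, the right-hand side of the proposition---the existence of some $\vect\in 2^\lvf$ with $\vect(0)=1$ for which $(\vf,\vect,\zervf)$ is satisfiable in $\clF$---is literally the existence of some $\Phi\subseteq\SF{\vf}$ with $\vf\in\Phi$ for which $(\vf,\Phi,\empc)$ is satisfiable in $\clF$, under the correspondence $\Phi=\fv{\vf}{\vect}$. The equivalence of this condition with the plain satisfiability of $\vf$ in $\clF$ is exactly the content of Proposition~\ref{prop:sat_via_cSat}, so the proof closes by quoting it.

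The role of Proposition~\ref{psp:prop:SubfCond} here is not to supply a missing step but to legitimise working inside the vector representation at all: it guarantees that restricting attention to conditions $\oGamma\subseteq\SF{\vf}^\Al$ (the only ones the vector encoding can name) costs no generality. In the present case the relevant condition $\empc$ already lies in $\SF{\vf}^\Al$, so this restriction is vacuous and the translation is immediate. I do not expect any genuine obstacle, since the statement is a purely notational reformulation; the one point to verify with care is that the indexing convention $\psi_0=\vf$ is precisely what turns the clause $\vf\in\Phi$ into the clause $\vect(0)=1$, and that $\zervf$ really decodes to $\empc$ rather than to some other all-subformula-free condition.
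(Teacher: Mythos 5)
Your proposal is correct and matches the paper exactly: the paper gives no separate argument, simply presenting this as the reformulation of Proposition~\ref{prop:sat_via_cSat} in the vector encoding, justified via Proposition~\ref{psp:prop:SubfCond}, which is precisely the translation you carry out. Your added check that $\psi_0=\vf$ turns $\vf\in\Phi$ into $\vect(0)=1$ and that $\zervf$ decodes to $\empc$ is the right (and only) thing to verify.
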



\hide{
Let $\bigsqcup \clF$ be the class of frames of form $\bigsqcup_I \frF_i$, where
$I$ is a non-empty set and
$\frF_i\in \clF$ for all $i\in I$, and let
$\DisFin{\clF}$ be
класс таких шкал при конечных $I$,
и $\Disk{k}{\clF}$  --- класс таких шкал при $I$ мощности не более $k$.
}

For Boolean vectors
$\vect=(v_0,\ldots, v_{l-1})$, $\vectu=(u_0,\ldots, u_{l-1})$,  let $\vect+\vectu$ be their element-wise disjunction
$(\max\{v_0,u_0\},\ldots, \max\{v_{l-1},u_{l-1}\})$.

\begin{lemma}\label{psp:lem:satsumm-decomp-wide-poly}
Let $\clG$ be a class of  $\AlM$-frames and $0<b< \omega$. A tie $(\vf,\vect,\vectU)$ is satisfiable  in $\Disk{b}{\clG}$
iff there exist a positive $k\leq b$, $\vect_0,\dts,\vect_{k-1}\in 2^{\lvf}$ such that
$\vect=\sum_{i<k}\vect_i$ and for every $i<k$ the tie $(\vf,\vect_i, \vectU)$  is satisfiable in $\clG$.
\end{lemma}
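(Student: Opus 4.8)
The plan is to reduce the statement to a single observation about how the characterization set $\SFMG{\vf}{\mM}{\oGamma}$ behaves under disjoint unions, and then to translate between sets of subformulas and their Boolean-vector codes. First I would record the identity
\[
\SFMG{\vf}{\bigsqcup_{i\in I}\mM_i}{\oGamma}=\bigcup_{i\in I}\SFMG{\vf}{\mM_i}{\oGamma},
\]
valid for any family of $\AlM$-models $(\mM_i)_{i\in I}$ and any condition $\oGamma$. This is immediate from Proposition \ref{prop:truth-pres-for-cs}(2): a subformula $\psi$ lies in the left-hand side iff $\cmo{\bigsqcup_{i}\mM_i}{(i,w)}{\oGamma}{\psi}$ for some component $i$ and some $w$ in $\mM_i$, which by that proposition holds iff $\cmo{\mM_i}{w}{\oGamma}{\psi}$, i.e.\ iff $\psi$ belongs to some $\SFMG{\vf}{\mM_i}{\oGamma}$. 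In vector terms, if $\vect_i$ codes $\SFMG{\vf}{\mM_i}{\oGamma}$, then $\sum_{i\in I}\vect_i$ codes the union, since the element-wise disjunction of characteristic vectors realizes set union.

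For the ``if'' direction I would start from a positive $k\leq b$ and vectors $\vect_0,\dts,\vect_{k-1}$ with $\vect=\sum_{i<k}\vect_i$ such that each tie $(\vf,\vect_i,\vectU)$ is satisfiable in $\clG$. Writing $\oGamma=(\fv{\vf}{\vectu_a})_{a\in\AlM}$, satisfiability provides models $\mM_i$ on frames $\frF_i\in\clG$ with $\SFMG{\vf}{\mM_i}{\oGamma}=\fv{\vf}{\vect_i}$. I would then form $\mM=\bigsqcup_{i<k}\mM_i$, a model on $\bigsqcup_{i<k}\frF_i\in\Disk{b}{\clG}$ (it is exactly $0<k\leq b$ that places this frame in $\Disk{b}{\clG}$). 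By the identity above, $\SFMG{\vf}{\mM}{\oGamma}=\bigcup_{i<k}\fv{\vf}{\vect_i}=\fv{\vf}{\sum_{i<k}\vect_i}=\fv{\vf}{\vect}$, so $(\vf,\vect,\vectU)$ is satisfiable in $\Disk{b}{\clG}$.

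For the ``only if'' direction I would take a witnessing model on a frame $\bigsqcup_{i\in I}\frF_i$ with $0<|I|\leq b$ and all $\frF_i\in\clG$. Since a valuation on a disjoint union of frames restricts componentwise, this model is $\bigsqcup_{i\in I}\mM_i$ for models $\mM_i$ on $\frF_i$. Setting $k=|I|$ (so $0<k\leq b$), re-indexing $I$ as $\{0,\dts,k-1\}$, and letting $\vect_i$ code $\SFMG{\vf}{\mM_i}{\oGamma}$, each tie $(\vf,\vect_i,\vectU)$ is satisfiable in $\clG$ by construction, while the identity gives that $\sum_{i<k}\vect_i$ codes $\SFMG{\vf}{\mM}{\oGamma}=\fv{\vf}{\vect}$, i.e.\ $\vect=\sum_{i<k}\vect_i$, as required.

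The argument is essentially bookkeeping once the disjoint-union identity is in place. The only point requiring a little care is the passage between the ``frame'' formulation of $\Disk{b}{\clG}$ (an index set $I$ with $0<|I|\leq b$) and the ``summand'' formulation (a number $k$ of summand-vectors), together with the observation that any model on a disjoint union of frames is itself a disjoint union of models, so that Proposition \ref{prop:truth-pres-for-cs}(2) applies verbatim. I do not expect a genuine obstacle here.
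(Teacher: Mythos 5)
Your proposal is correct and follows essentially the same route as the paper: both rest on the identity $\SFMG{\vf}{\bigsqcup_{i}\mM_i}{\oGamma}=\bigcup_{i}\SFMG{\vf}{\mM_i}{\oGamma}$ obtained from Proposition \ref{prop:truth-pres-for-cs}, followed by the translation into vector notation. You merely spell out the two directions and the model/frame bookkeeping more explicitly than the paper does.
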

\begin{proof}
This proposition is a corollary of Proposition \ref{prop:truth-pres-for-cs} formulated in our new vector notation for ties. Indeed,
by Proposition \ref{prop:truth-pres-for-cs}, we have
\begin{equation}\label{eq:satsumm-decomp-vect-1-dis}
\SFMG{\vf}{\bigsqcup_{i<k}\mM_i}{\oGamma}=\bigcup_{i<k}\SFMG{\vf}{\mM_i}{\oGamma}
\end{equation}
for every $\AlM$-models $\mM_0,\dts,\mM_{k-1}$  and every $\AlM$-condition $\oGamma$.
We are interested in the situation when $0<k\leq b$ and frames of $\mM_0,\dts,\mM_{k-1}$  are in $\clG$.
Assuming that $\oGamma$  is the condition represented
by $\vectU$,
and vectors $\vect, \vect_1,\ldots, \vect_{k-1}$ are given by
the identities $\fv{\vf}{\vect}=\SFMG{\vf}{\bigsqcup_{i<k}\mM_i}{\oGamma}$,
$\fv{\vf}{\vect_0} =\SFMG{\vf}{\mM_0}{\oGamma}$, \ldots, $\fv{\vf}{\vect_{k-1}} =\SFMG{\vf}{\mM_{k-1}}{\oGamma}$,
we see that
(\ref{eq:satsumm-decomp-vect-1-dis}) takes the form $\vect=\sum_{i<k}\vect_i$.
\end{proof}


For Boolean vectors  $\vect, \vectu_0,\ldots,\vectu_{\AlM-1}$  of the same length and $a\in\AlM$, we put
 $(\vectu_0,\ldots,\vectu_{\Al-1})\plusa\vect=(\vectu'_0,\ldots,\vectu'_{\Al-1})$,
where $\vectu'_a=\vectu_a+ \vect$, and
$\vectu'_b=\vectu_b$ for $b\neq a$.

Similarly to models, for $\AlM$-frames $\frF_0$ and $\frF_1$ and $a\in \Al$  we define $\frF_0\plusa\frF_1$ as $\LSuma{a}{(2,<)}{\frF_i}$;
for classes $\clF$ and $\clG$  of $\AlM$-frames, let $\clF\plusa\clG=\{\frF\plusa\frG \mid \frF\in \clF \,\&\, \frG\in\clG\}$.

\begin{lemma}\label{psp:lem:satsumm-decomp-top-poly}
Let $\clF$ and $\clG$ be classes of $\AlM$-frames and $a\in \AlM$. Then a tie
$(\vf,\vect,\vectU)$ is satisfiable in $\clF\plusa\clG$  iff
there exist $\vect_0,\vect_1\in 2^{\lvf}$ such that
\begin{enumerate}
\item $\vect=\vect_0+\vect_1$, and
\item $(\vf,\vect_0,\vectU\plusa\vect_1)$ is satisfiable in $\clF$, and
\item $(\vf,\vect_1,\vectU)$ is satisfiable in $\clG$.
\end{enumerate}
\end{lemma}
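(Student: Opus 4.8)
The plan is to translate Lemma \ref{lem:satsumm-decomp-poly} into the vector notation for ties, exactly as Lemma \ref{psp:lem:satsumm-decomp-wide-poly} translates Proposition \ref{prop:truth-pres-for-cs}. Throughout, I set $\Phi=\fv{\vf}{\vect}$ and let $\oGamma$ be the condition represented by $\vectU$. I would first fix the dictionary between the set operations on conditions and the vector operations: $\vectU\plusa\vect_1$ represents $\oGamma\cupa\fv{\vf}{\vect_1}$, and $\vect_0+\vect_1$ represents $\fv{\vf}{\vect_0}\cup\fv{\vf}{\vect_1}$. The structural observation carrying the argument is that every model $\mM$ on a frame in $\clF\plusa\clG$ can be written as $\mM_0\plusa\mM_1$, where $\mM_0$ is a model on a frame in $\clF$ and $\mM_1$ a model on a frame in $\clG$ (the valuation of $\mM$ restricts to the two summands), and conversely every such pair yields a model on a frame in $\clF\plusa\clG$.

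For the direction from right to left, I would start from $\vect_0,\vect_1$ satisfying (1)--(3). Condition (3) supplies a model $\mM_1$ on a frame in $\clG$ with $\SFMG{\vf}{\mM_1}{\oGamma}=\fv{\vf}{\vect_1}$, and condition (2), read through the dictionary above, supplies a model $\mM_0$ on a frame in $\clF$ with $\SFMG{\vf}{\mM_0}{\oGamma\cupa\fv{\vf}{\vect_1}}=\fv{\vf}{\vect_0}$. Forming $\mM_0\plusa\mM_1$ and applying Lemma \ref{lem:satsumm-decomp-poly} with the condition $\oGamma$, the right-hand side of its identity becomes $\fv{\vf}{\vect_0}\cup\fv{\vf}{\vect_1}=\fv{\vf}{\vect_0+\vect_1}=\Phi$ by (1); hence $(\vf,\vect,\vectU)$ is satisfiable in $\clF\plusa\clG$.

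For the converse, I would take a witnessing model and decompose it as $\mM=\mM_0\plusa\mM_1$ with $\SFMG{\vf}{\mM}{\oGamma}=\Phi$, and simply read off the summand characterizations: let $\vect_1$ represent $\SFMG{\vf}{\mM_1}{\oGamma}$ and $\vect_0$ represent $\SFMG{\vf}{\mM_0}{\oGamma\cupa\SFMG{\vf}{\mM_1}{\oGamma}}$. Then (3) holds because $\mM_1$ witnesses $(\vf,\vect_1,\vectU)$, and (2) holds because $\mM_0$ witnesses $(\vf,\vect_0,\vectU\plusa\vect_1)$, both by construction. Finally Lemma \ref{lem:satsumm-decomp-poly} forces $\Phi=\fv{\vf}{\vect_0}\cup\fv{\vf}{\vect_1}$, which is exactly $\vect=\vect_0+\vect_1$, giving (1).

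I do not expect a genuine obstacle: the mathematical content is entirely provided by Lemma \ref{lem:satsumm-decomp-poly}. The only points requiring care are the bookkeeping of the correspondence between the set-valued operations $\cup$, $\cupa$ and the vector operations $+$, $\plusa$, and the remark that all the characterizations $\SFMG{\vf}{\cdot}{\cdot}$ are subsets of $\SF{\vf}$ and all conditions involved stay within $\SF{\vf}^{\Al}$, so that the ties $(\vf,\vect_0,\vectU\plusa\vect_1)$ and $(\vf,\vect_1,\vectU)$ are legitimate objects of the conditional satisfiability problem. Once this dictionary is fixed, both directions are immediate.
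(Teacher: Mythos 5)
Your proposal is correct and follows essentially the same route as the paper's own proof: both directions rest on decomposing a model on $\clF\plusa\clG$ as $\mM_0\plusa\mM_1$ and invoking the identity of Lemma \ref{lem:satsumm-decomp-poly} with $\Psi=\SFMG{\vf}{\mM_1}{\oGamma}$, then translating between the set operations $\cup$, $\cupa$ and the vector operations $+$, $\plusa$ exactly as you describe. No gaps.
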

\begin{proof}
Let
$\mM_0$ be a model on a frame in $\clF$, and let $\mM_1$ be a model on a frame in $\clG$.
Let
$\oGamma$ be the condition represented by $\vectU$, i.e.,  $\oGamma=(\fv{\vf}{\vectU(a)})_{a\in\Al}$, and let  $\Psi=\SFMG{\vf}{\mM_1}{\oGamma}$.
By Lemma \ref{lem:satsumm-decomp-poly},
\begin{equation}\label{eq:satsumm-decomp-vect-1-pol}
\SFMG{\vf}{\mM_0\plusa\mM_1}{\oGamma}=\SFMG{\vf}{\mM_0}{\oGamma\cupa\Psi}\cup \Psi.
\end{equation}

For the ``only if'' part, assume that $\fv{\vf}{\vect}=\SFMG{\vf}{\mM_0\plusa\mM_1}{\oGamma}$.
Consider tuples  $\vect_0,\vect_1\in2^\lvf$ such that
\begin{eqnarray}
\label{eq:satsumm-decomp-vect-1-pol-2}
\fv{\vf}{\vect_0}&=&\SFMG{\vf}{\mM_0}{\oGamma\cupa\Psi}, \text{ and }\\
\label{eq:satsumm-decomp-vect-1-pol-3}
\fv{\vf}{\vect_1}&=&\SFMG{\vf}{\mM_1}{\oGamma}.
\end{eqnarray}
The identity (\ref{eq:satsumm-decomp-vect-1-pol-2}) says that $(\vf,\vect_0,\vectU\plusa\vect_1)$ is satisfiable in $\clF$, and
the identity (\ref{eq:satsumm-decomp-vect-1-pol-3}) says that $(\vf,\vect_1,\vectU)$ is satisfiable in $\clG$.
Since $\Psi=\SFMG{\vf}{\mM_1}{\oGamma}$, by
(\ref{eq:satsumm-decomp-vect-1-pol}) we obtain $\fv{\vf}{\vect}=\fv{\vf}{\vect_0}\cup\fv{\vf}{\vect_1}$, that is $\vect=\vect_0+\vect_1$.

For the ``if'' part, assume that for some $\vect_0,~ \vect_1$  with $\vect=\vect_0 + \vect_1$  we
have
(\ref{eq:satsumm-decomp-vect-1-pol-2})
(this can be assumed since $(\vf,\vect_0,\vectU\plusa\vect_1)$ is satisfiable in $\clF$), and
(\ref{eq:satsumm-decomp-vect-1-pol-3})
(this can be assumed since $(\vf,\vect_1,\vectU)$ is satisfiable in $\clG$).
Since $\vect=\vect_0+ \vect_1$, we have $\fv{\vf}{\vect}=\fv{\vf}{\vect_0}\cup\fv{\vf}{\vect_1}$.
Now by (\ref{eq:satsumm-decomp-vect-1-pol}) we obtain
that $\fv{\vf}{\vect}=\SFMG{\vf}{\mM_0+\mM_1}{\oGamma}$. The latter proves that
$(\vf,\vect,\vectU)$ is satisfiable in $\clF\plusa\clG$.
\end{proof}

\comm{Key lemma(?)}
\begin{lemma}\label{lem:psp:SatSumTree-poly}
Let $\clF$  be a class of $\AlM$-frame, $a\in \AlM$, and $0<h,b<\omega$.
A tie $(\vf,\vect,\vectU)$ is satisfiable in
$\LSuma{a}{\Tr(h+1,b)}{\clF}$  iff it is satisfiable  in $\clF$, or
there exist a positive  $k\leq b$  and  $\vectu,\vect_0,\dts,\vect_{k-1}\in 2^{\lvf}$ such that
\begin{enumerate}
\item
$\vect=\vectu+ \sum_{i<k}\vect_i$, and
\item
the tie $(\vf,\vectu,\vectU\plusa\sum_{i<k}\vect_i)$ is satisfiable in $\clF$, and
\item for all $i<k$, the tie $(\vf,\vect_i,\vectU)$ is satisfiable in $\LSuma{a}{\Tr(h,b)}{\clF}$.
\end{enumerate}
\end{lemma}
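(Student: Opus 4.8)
The plan is to reduce the statement to the two decomposition lemmas already proved, namely Lemma~\ref{psp:lem:satsumm-decomp-top-poly} (for $\plusa$) and Lemma~\ref{psp:lem:satsumm-decomp-wide-poly} (for disjoint unions), by first establishing a structural decomposition of sums over trees of height $h+1$. Throughout, I write $\clG=\LSuma{a}{\Tr(h,b)}{\clF}$ for the class appearing in clause~(3).

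The geometric heart of the argument, and the step I expect to be the main obstacle, is a recursive description of tree-sums. A finite tree $\frI\in\Tr(h+1,b)$ is either a single point, or it consists of a root $r$ together with its immediate successors $i_1,\dots,i_k$ (with $1\le k\le b$), below which sit the subtrees $\frI_1,\dots,\frI_k$, each lying in $\Tr(h,b)$. Since the tree order is transitive, in the $a$-sum the root component is $a$-connected to every point of every subtree-component, while distinct subtrees remain $a$-incomparable, and the other relations never connect distinct components. Hence, for any family of summands,
$$\LSuma{a}{\frI}{\frF_i}\ \isom\ \frF_r\plusa\Bigl(\textstyle\bigsqcup_{m<k}\LSuma{a}{\frI_m}{\frF_i}\Bigr),$$
which I would derive from the associativity of sums (Theorem~\ref{thm:basicTrPr}, item~\ref{item3:basicTrPr}) applied to the presentation of $\frI$ as a root sitting below the disjoint union of its subtrees. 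Reading this in both directions (gluing a fresh root below $k\le b$ trees of $\Tr(h,b)$ produces a tree of $\Tr(h+1,b)$) yields the class identity, up to isomorphism,
$$\LSuma{a}{\Tr(h+1,b)}{\clF}\ =\ \clF\ \cup\ \bigl(\clF\plusa\Disk{b}{\clG}\bigr).$$
Since isomorphic frames satisfy exactly the same ties, a tie is satisfiable in the left-hand class iff it is satisfiable in $\clF$ or in $\clF\plusa\Disk{b}{\clG}$; the first disjunct is precisely the ``or it is satisfiable in $\clF$'' alternative of the lemma.

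It then remains to unfold the second disjunct. Applying Lemma~\ref{psp:lem:satsumm-decomp-top-poly} with bottom class $\clF$ and top class $\Disk{b}{\clG}$, the tie $(\vf,\vect,\vectU)$ is satisfiable in $\clF\plusa\Disk{b}{\clG}$ iff there are $\vectu,\vect_1\in 2^\lvf$ with $\vect=\vectu+\vect_1$ such that $(\vf,\vectu,\vectU\plusa\vect_1)$ is satisfiable in $\clF$ and $(\vf,\vect_1,\vectU)$ is satisfiable in $\Disk{b}{\clG}$. Feeding the last requirement into Lemma~\ref{psp:lem:satsumm-decomp-wide-poly} replaces it by the existence of a positive $k\le b$ and $\vect_0,\dots,\vect_{k-1}\in 2^\lvf$ with $\vect_1=\sum_{i<k}\vect_i$ and each $(\vf,\vect_i,\vectU)$ satisfiable in $\clG=\LSuma{a}{\Tr(h,b)}{\clF}$. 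Substituting $\vect_1=\sum_{i<k}\vect_i$ turns the three surviving conditions into exactly clauses (1)--(3) of the statement, completing the proof.
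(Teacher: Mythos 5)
Your proposal is correct and follows essentially the same route as the paper: the paper likewise reduces the claim to the decomposition of $\Tr(h+1,b)$-sums as either a summand alone or a frame of the form $\clF\plusa\Disk{b}{\LSuma{a}{\Tr(h,b)}{\clF}}$, and then applies Lemma~\ref{psp:lem:satsumm-decomp-top-poly} followed by Lemma~\ref{psp:lem:satsumm-decomp-wide-poly} exactly as you do. The only difference is that the paper dismisses the structural root-plus-subtrees decomposition as immediate ``by the definition,'' whereas you spell it out via associativity of sums --- a correct and welcome elaboration, not a divergence.
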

\begin{proof}
By the definition of $\LSuma{a}{\Tr(h+1,b)}{\clF}$,
the tie  $\tau=(\vf,\vect,\vectU)$ is satisfiable in
$\LSuma{a}{\Tr(h+1,b)}{\clF}$ iff $\tau$ is satisfiable in $\clF$ or
in $\clF\plusa\clG$, where $\clG=\Disk{b}{\LSuma{a}{\Tr(h,b)}{\clF}}$.

By Lemma \ref{psp:lem:satsumm-decomp-top-poly}, $\tau$  is satisifable in   $\clF\plusa\clG$  iff there exist
$\vectu,\vectu'\in 2^{\lvf}$ such that
$\vect=\vectu+\vectu'$,
$(\vf,\vectu,\vectU\plusa\vectu')$ is satisfiable in $\clF$, and
$(\vf,\vectu',\vectU)$ is satisfiable in $\clG$.
By Lemma  \ref{psp:lem:satsumm-decomp-wide-poly}, $(\vf,\vectu',\vectU)$ is satisfiable in  $\clG$
iff  there exist
 $0<k\leq b$ and
 $\vect_0,\dts,\vect_{k-1}\in 2^{\lvf}$ such that
$\vectu'=\sum_{i<k}\vect_i$, and for all  $i<k$ the tie $(\vf,\vect_i,\vectU)$  is satisfiable in  $\LSuma{a}{\Tr(h,b)}{\clF}$.
\end{proof}

This lemma
allows to describe   the  procedure  $\CondSatSumFa$ (Algorithm \ref{alg:SatSumFa-poly})
which using an oracle for\improve{\todo{for?}} $\CSat{\clF}$ decides whether a
given tie is satisfiable
in
$\LSuma{a}{\Tr(h,b)}{\clF}$.
Namely, we have:
\begin{theorem}\label{Thm:CorrectAlgTree}
Let $a<\Al<\omega$,
 $\clF$ a class of $\Al$-frames,
 $(\vf,\vect,\vectU)$ an $\Al$-tie, and $0<h,b<\omega$. Then
 $$
 (\vf,\vect,\vectU) \text{ is satisfiable in }
\LSuma{a}{\Tr(h,b)}{\clF} \tiff  \CondSatSumFa(\vf,\vect,\vectU,h,b)
\text{ returns }\true.
$$
\hide{
 $$
 $(\vf,\vect,\vectU)$ is satisfiable in
$\CSat \LSuma{a}{\Tr(h,b)}{\clF}$ iff  $\CondSatSumFa(\vf,\vect,\vectU,h,b)$
(Algorithm \ref{alg:SatSumFa-poly})
returns $\true$.
}
\end{theorem}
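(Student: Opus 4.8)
The plan is to prove the equivalence by induction on the height parameter $h$, reading the procedure $\CondSatSumFa$ as the direct recursive implementation of the characterization in Lemma \ref{lem:psp:SatSumTree-poly}. On input $(\vf,\vect,\vectU,h,b)$ the procedure should first query the oracle whether $(\vf,\vect,\vectU)$ is satisfiable in $\clF$ itself; when $h>1$ it should additionally search, over all positive $k\leq b$ and all Boolean vectors $\vectu,\vect_0,\dts,\vect_{k-1}\in 2^{\lvf}$ with $\vect=\vectu+\sum_{i<k}\vect_i$, for a decomposition satisfying the oracle test $(\vf,\vectu,\vectU\plusa\sum_{i<k}\vect_i)\in\CSat{\clF}$ together with recursive successes $\CondSatSumFa(\vf,\vect_i,\vectU,h-1,b)=\true$ for every $i<k$. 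The task is then to show that this Boolean combination of oracle answers and recursive calls equals the truth value of satisfiability of the tie in $\LSuma{a}{\Tr(h,b)}{\clF}$.

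For the base case $h=1$, I would observe that a tree of height $1$ is a single root whose relation is empty, so the sum $\LSuma{a}{\Tr(1,b)}{\clF}$ coincides with $\clF$ (the unique summand, with no sum-edges added, since the second disjunct in the definition of $R^\Sigma_a$ can never fire). Hence satisfiability of the tie in $\LSuma{a}{\Tr(1,b)}{\clF}$ is by definition the membership $(\vf,\vect,\vectU)\in\CSat{\clF}$, which is exactly the value returned by the single oracle query made at height $1$.

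For the inductive step, assuming the claim for height $h$, I would apply Lemma \ref{lem:psp:SatSumTree-poly} with $h+1$ in place of $h$. The lemma asserts that the tie is satisfiable in $\LSuma{a}{\Tr(h+1,b)}{\clF}$ iff it is satisfiable in $\clF$, or there exist a positive $k\leq b$ and vectors $\vectu,\vect_0,\dts,\vect_{k-1}$ realizing conditions (1)--(3). Condition (2) is precisely the oracle test on the top summand, and condition (3) asks satisfiability of each $(\vf,\vect_i,\vectU)$ in $\LSuma{a}{\Tr(h,b)}{\clF}$, which by the inductive hypothesis is decided correctly by $\CondSatSumFa(\vf,\vect_i,\vectU,h,b)$. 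Since the algorithm's search ranges over exactly the same $k$ and the same decompositions $\vect=\vectu+\sum_{i<k}\vect_i$ and combines exactly the same tests, it returns $\true$ iff one of these witnessing tuples exists, i.e., iff the tie is satisfiable in $\LSuma{a}{\Tr(h+1,b)}{\clF}$.

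The real content of the theorem is carried by Lemma \ref{lem:psp:SatSumTree-poly}, so the only genuine care needed is bookkeeping: matching the algorithm's control flow (the disjunction between the direct oracle query and the branching search, the positivity constraint $k>0$, and the exact decomposition $\vect=\vectu+\sum_{i<k}\vect_i$) to the two disjuncts of the lemma, and treating the $h=1$ base case separately. I expect the main subtlety to be purely organizational --- ensuring the quantifier alternation in the algorithm (existential over decompositions, universal over the $k$ subtrees) mirrors the lemma faithfully --- rather than any new mathematical difficulty; the polynomial-space resource bound is a separate matter, addressed where $\CondSatSumFa$ is used to establish Theorem \ref{thm:main_compl_res}.
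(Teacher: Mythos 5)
Your proposal is correct and is exactly the argument the paper intends: the paper gives no explicit proof of Theorem \ref{Thm:CorrectAlgTree}, merely noting that Lemma \ref{lem:psp:SatSumTree-poly} ``allows to describe the procedure,'' and your induction on $h$ --- with the base case $h=1$ reducing to the single oracle query because a height-$1$ tree is a singleton index, and the inductive step matching the algorithm's search to conditions (1)--(3) of the lemma --- is the intended filling-in of that gap. No issues.
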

This theorem will be our main technical tool for complexity results.
For the first of its corollaries, we show how
to reduce the conditional satisfiability on sums to the conditional satisfiability on summands.
\improve{Eng}
\begin{theorem}\label{thm:main_compl_resCSat}
Let $a <\AlM<\omega$, $\clF$ a class of $\AlM$-frames, and
  $\clI$  a class of Noetherian orders containing all finite trees.
  Then:
  \begin{enumerate}
\item \label{thm:main_compl_resCSat_i1}
$\Sat{\LSuma{a}{\clI}{\clF}} \RedPSP   \CSat{\clF}$.
\item   \label{thm:main_compl_resCSat_i2}
If also $\clI$ is closed under finite disjoint unions, then
$\CSat{\LSuma{a}{\clI}{\clF}} \RedPSP    \CSat{\clF}$.
\end{enumerate}
\end{theorem}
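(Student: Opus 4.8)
The plan is to chain together the three tools developed above, with no genuinely new mathematics beyond the bookkeeping needed for the space bound. The reduction of satisfiability over an arbitrary Noetherian class $\clI$ to satisfiability over the small trees $\Tr(\lvf,\lvf)$ is supplied by Theorem \ref{thm:smll-tree-for-Csat}; the decision of satisfiability in $\LSuma{a}{\Tr(h,b)}{\clF}$ relative to the oracle $\CSat{\clF}$ is supplied by the procedure $\CondSatSumFa$ together with its correctness, Theorem \ref{Thm:CorrectAlgTree}; and, for the second item, the passage through a bounded disjoint union is handled by Lemma \ref{psp:lem:satsumm-decomp-wide-poly}. I set $h=b=\lvf$ throughout, so that the tree parameters are polynomial in the input.

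For item \ref{thm:main_compl_resCSat_i1}, given an input formula $\vf$ I would first compute $\lvf$. Since $\Trf\subseteq\clI\subseteq\NPO$, Theorem \ref{thm:smll-tree-for-Csat}(\ref{thm:smll-tree-for-Csat-itemSat}) yields $\Log\LSuma{a}{\clI}{\clF}=\Log\LSuma{a}{\NPO}{\clF}=\Log\LSuma{a}{\Trf}{\clF}$, so, satisfiability of a formula being the complement of validity of its negation, $\vf$ is satisfiable in $\LSuma{a}{\clI}{\clF}$ iff it is satisfiable in $\LSuma{a}{\NPO}{\clF}$, which by the ``moreover'' clause of that item holds iff $\vf$ is satisfiable in $\LSuma{a}{\Tr(\lvf,\lvf)}{\clF}$. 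By Proposition \ref{prop:sat_via_cSat-vect} the latter holds iff the tie $(\vf,\vect,\zervf)$ is satisfiable in $\LSuma{a}{\Tr(\lvf,\lvf)}{\clF}$ for some $\vect\in 2^\lvf$ with $\vect(0)=1$, and by Theorem \ref{Thm:CorrectAlgTree} this is decided by the call $\CondSatSumFa(\vf,\vect,\zervf,\lvf,\lvf)$. The reduction therefore enumerates all such $\vect$ one at a time, runs $\CondSatSumFa$ with oracle $\CSat{\clF}$, and accepts as soon as one call returns $\true$.

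For item \ref{thm:main_compl_resCSat_i2}, with $\clI$ additionally closed under finite disjoint unions, I would instead invoke Theorem \ref{thm:smll-tree-for-Csat}(\ref{thm:smll-tree-for-Csat-itemCSat}): the interchangeability $\LSuma{a}{\NPO}{\clF}\equiv\LSuma{a}{\clI}{\clF}$ and the equivalence (\ref{eq:main}) together show that a given tie $(\vf,\vect,\vectU)$ is satisfiable in $\LSuma{a}{\clI}{\clF}$ iff it is satisfiable in $\Disk{\lvf}{\clG}$, where $\clG=\LSuma{a}{\Tr(\lvf,\lvf)}{\clF}$. By Lemma \ref{psp:lem:satsumm-decomp-wide-poly} this amounts to the existence of a positive $k\leq\lvf$ and $\vect_0,\dots,\vect_{k-1}\in 2^\lvf$ with $\vect=\sum_{i<k}\vect_i$ such that every $(\vf,\vect_i,\vectU)$ is satisfiable in $\clG$, i.e. such that $\CondSatSumFa(\vf,\vect_i,\vectU,\lvf,\lvf)$ returns $\true$. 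As any such $\vect_i$ satisfies $\vect_i\leq\vect$, I would check this covering condition by forming the disjunction $\vect^*$ of all $v\leq\vect$ for which $\CondSatSumFa(\vf,v,\vectU,\lvf,\lvf)$ returns $\true$ and testing whether $\vect^*=\vect$, with the proviso that at least one such $v$ exists (to force $k\geq 1$); since each such $v$ has at most $\lvf$ one-bits, one satisfiable covering vector per coordinate yields a decomposition with $k\leq\lvf$.

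The step I expect to be the main obstacle is not correctness, which follows from the cited results, but the polynomial space bound. In both reductions the enumerations, over root-vectors $\vect$ and over sub-vectors $v\leq\vect$, range over exponentially many candidates, but they are tried sequentially and each candidate occupies only $O(\lvf)$ bits, so reusing the space keeps the outer loops within polynomial space. For $\CondSatSumFa$ itself, called with $h=b=\lvf$, the recursion structure dictated by Lemma \ref{lem:psp:SatSumTree-poly} decreases the height parameter by one at each level, so the recursion depth is at most $\lvf$, while each stack frame stores only boundedly many length-$\lvf$ vectors (the current $\vect$ and $\vectU$, the guessed root part, and a running disjunction) together with loop counters, i.e.\ $O(\Al\cdot\lvf)$ bits. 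As $\Al$ is fixed and finite, the total working space is polynomial in $\lvf$, and every query written on the oracle tape is a tie $(\vf,\cdot,\cdot)$ over $\SF{\vf}$ and hence of polynomial size. This establishes both polynomial space Turing reductions.
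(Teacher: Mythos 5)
Your proposal is correct and follows essentially the same route as the paper: reduce to small trees via Theorem \ref{thm:smll-tree-for-Csat}, decide satisfiability there with $\CondSatSumFa$ and the oracle $\CSat{\clF}$ (Theorem \ref{Thm:CorrectAlgTree}), handle item \ref{thm:main_compl_resCSat_i1} through Proposition \ref{prop:sat_via_cSat-vect} and item \ref{thm:main_compl_resCSat_i2} through Lemma \ref{psp:lem:satsumm-decomp-wide-poly}, and then bound the recursion depth by $\lvf$. Your ``running disjunction over all $v\leq\vect$'' test for the covering condition in item \ref{thm:main_compl_resCSat_i2} is a harmless implementation variant of the paper's enumeration of decompositions $\vect=\sum_{i<k}\vect^{(i)}$, and both analyses land on the same polynomial space bound.
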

\improve{
\todo{Somewhere, after the proof of the main Theorem, put a remark on rooted case; say, why it is so important (and confess about the annotated formulation in AiML)
}
}
\improve{
\todo{The proof of this version is based on Lemma 11 [WRONG NUMBER] in \cite{AiML2018-sums} (in Diser it was proved in more details). Exclude. Give a comment. And give a comment (just before publishing) for erratum in the announced in \cite{AiML2018-sums} formulation. }
}
\comm{A simplified version:

\begin{theorem}\label{thm:CSatPspace}\comm{$\AlM<\omega$?}
Let $a \in \AlM<\omega$ and $\clF$ a class of $\AlM$-frames.
If $\clI$ is a class of Noetherian orders containing $\Trf$ and
closed under finite disjoint sums, 
then 
$\CSat{\LSuma{a}{\clI}{\clF}}\in \PSPACE(\CSat \clF).$
\end{theorem}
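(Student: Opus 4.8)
The plan is to obtain both reductions on top of Theorem~\ref{Thm:CorrectAlgTree}, which already supplies a \emph{correct} $\CSat{\clF}$-oracle procedure $\CondSatSumFa$ for tie-satisfiability in the finite-tree sums $\LSuma{a}{\Tr(h,b)}{\clF}$. What remains is threefold: (i) collapse the arbitrary index class $\clI$ to trees of size bounded by $\lvf$ via Theorem~\ref{thm:smll-tree-for-Csat}; (ii) wrap $\CondSatSumFa$ in a short outer search that supplies the missing existential layer; and (iii) check that the whole computation, oracle tape included, runs in polynomial space. I would keep the two items apart: item~\ref{thm:main_compl_resCSat_i1} is about plain satisfiability, which localizes to a single tree-sum, whereas item~\ref{thm:main_compl_resCSat_i2} is about ties, whose global character forces me to pass through finite disjoint unions of tree-sums --- exactly where the extra hypothesis that $\clI$ is closed under finite disjoint unions is used.

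For item~\ref{thm:main_compl_resCSat_i1}, the inclusions $\LSuma{a}{\Tr(\lvf,\lvf)}{\clF}\subseteq\LSuma{a}{\clI}{\clF}\subseteq\LSuma{a}{\NPO}{\clF}$ (from $\Tr(\lvf,\lvf)\subseteq\Trf\subseteq\clI\subseteq\NPO$) together with Theorem~\ref{thm:smll-tree-for-Csat}(\ref{thm:smll-tree-for-Csat-itemSat}) show that $\vf$ is satisfiable in $\LSuma{a}{\clI}{\clF}$ iff it is satisfiable in $\LSuma{a}{\Tr(\lvf,\lvf)}{\clF}$. By Proposition~\ref{prop:sat_via_cSat-vect} the latter holds iff the tie $(\vf,\vect,\zervf)$ is satisfiable there for some $\vect \in 2^{\lvf}$ with $\vect(0)=1$. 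So the reduction enumerates all such $\vect$ with an $\lvf$-bit counter and, for each, calls $\CondSatSumFa(\vf,\vect,\zervf,\lvf,\lvf)$, accepting iff some call returns $\true$; correctness is immediate from Theorem~\ref{Thm:CorrectAlgTree}.

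For item~\ref{thm:main_compl_resCSat_i2}, closure of $\clI$ under finite disjoint unions lets me invoke Theorem~\ref{thm:smll-tree-for-Csat}(\ref{thm:smll-tree-for-Csat-itemCSat}): an input tie $(\vf,\vect,\vectU)$ with $\oGamma\subseteq\SF{\vf}^\Al$ (hence representable by $\lvf$-bit vectors) is satisfiable in $\LSuma{a}{\clI}{\clF}$ iff it is satisfiable in $\Disk{\lvf}{\LSuma{a}{\Tr(\lvf,\lvf)}{\clF}}$. Applying Lemma~\ref{psp:lem:satsumm-decomp-wide-poly} with $\clG=\LSuma{a}{\Tr(\lvf,\lvf)}{\clF}$ and $b=\lvf$ peels off this outer disjoint union: the tie holds there iff $\vect=\sum_{i<k}\vect_i$ for some positive $k\leq\lvf$ and some $\vect_0,\dots,\vect_{k-1}\in 2^{\lvf}$ with each $(\vf,\vect_i,\vectU)$ satisfiable in $\LSuma{a}{\Tr(\lvf,\lvf)}{\clF}$. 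So the reduction enumerates $k$ and the decomposition, calls $\CondSatSumFa(\vf,\vect_i,\vectU,\lvf,\lvf)$ on each component, and accepts iff some decomposition makes all calls return $\true$; correctness again follows from Theorem~\ref{Thm:CorrectAlgTree}.

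The main obstacle is confirming that these are genuinely $\RedPSP$ reductions, since Theorem~\ref{Thm:CorrectAlgTree} asserts only the correctness of $\CondSatSumFa$, not its space usage. The outer searches (over $\vect$, or over $k$ and the $\vect_i$) are brute force but run in polynomial space, as each candidate occupies $O(\lvf^2)$ bits and the enumeration needs only counters reused across iterations. The delicate point is $\CondSatSumFa$ itself: by Lemma~\ref{lem:psp:SatSumTree-poly} its height parameter drops by one per recursive level, so the recursion depth is at most $h=\lvf$, and at each level it need only store the current tie, the guessed root vector, the at most $b=\lvf$ guessed subtree vectors together with their running sum, and a few counters --- that is $O(\lvf^2)$ bits, the fixed finite $\Al$ contributing just a constant factor. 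Executing the recursion depth-first and reusing the stack frame for each subtree keeps the whole computation within $O(\lvf^3)$ bits; and since every oracle query is a tie over $\vf$ of size polynomial in $\lvf$, the oracle tape also stays polynomially bounded, as the definition of $\RedPSP$ demands. Composing the space-reusing outer search with these calls therefore yields the two $\RedPSP$ reductions.
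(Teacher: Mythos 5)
Your proposal is correct and follows essentially the same route as the paper: reduce to finite trees of parameters $\lvf$ via Theorem~\ref{thm:smll-tree-for-Csat}, peel off the bounded disjoint union with Lemma~\ref{psp:lem:satsumm-decomp-wide-poly}, enumerate the vector decompositions around calls to $\CondSatSumFa$ justified by Theorem~\ref{Thm:CorrectAlgTree}, and bound the space by recursion depth $\lvf$ times $O(\lvf^2)$ bits per frame, giving $O(\lvf^3)$ overall. Your explicit observation that Theorem~\ref{Thm:CorrectAlgTree} gives only correctness, so the space bound must come from inspecting the recursion, is exactly the (tersely stated) accounting the paper performs.
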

}
\begin{proof}
By Theorem \ref{thm:smll-tree-for-Csat}(\ref{thm:smll-tree-for-Csat-itemSat}), a formula
$\vf$
      is satisfiable in
      $\LSuma{a}{\clI}{\clF}$
iff
$\vf$  is satisfiable in  $\LSuma{a}{\Tr(\lvf,\lvf)}{\clF}$, and by
Proposition \ref{prop:sat_via_cSat-vect}, this means that there exists
a satisfiable in
$\LSuma{a}{\Tr(\lvf,\lvf)}{\clF}$
tie $(\vf,\vect,\zervf)$ with $\vect(0)=1$.
From
Theorem
\ref{Thm:CorrectAlgTree} we obtain
\begin{lemma}\label{Lem:nnSat}
A formula
      \improve{$\Al$-formula, $\Al$-tie -- see next item}
      $\vf$
      is satisfiable in
      $\LSuma{a}{\clI}{\clF}$
      iff
      there exists
      $\vect\in  2^\lvf$ such that $\vect(0)=1$
      and  
      $\CondSatSumFa(\vf,\vect,\zervf,\lvf,\lvf)$
      returns $\true$.
\end{lemma}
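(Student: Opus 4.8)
The plan is to obtain this lemma by chaining three results already established in the excerpt, each handling one layer of the reduction. The statement connects satisfiability of a plain formula over the (possibly infinite) class $\LSuma{a}{\clI}{\clF}$ with a concrete call to the recursive procedure $\CondSatSumFa$, so the work is simply to pass through two intermediate reformulations and compose the resulting biconditionals.

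First I would invoke Theorem \ref{thm:smll-tree-for-Csat}(\ref{thm:smll-tree-for-Csat-itemSat}) to cut the index class down to bounded finite trees: since $\clI$ contains all finite trees and lies between $\Trf$ and $\NPO$, a formula $\vf$ is satisfiable in $\LSuma{a}{\clI}{\clF}$ if and only if it is satisfiable in $\LSuma{a}{\Tr(\lvf,\lvf)}{\clF}$. The size bound $\lvf$ on both height and branching is exactly what makes the subsequent algorithmic analysis finitary and is the reason the procedure is called with parameters $h=b=\lvf$.

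Next I would translate ``formula satisfiability'' into ``tie satisfiability'' via Proposition \ref{prop:sat_via_cSat-vect}, applied to the class $\LSuma{a}{\Tr(\lvf,\lvf)}{\clF}$: the formula $\vf$ is satisfiable in this class exactly when there is a vector $\vect\in 2^\lvf$ with $\vect(0)=1$ such that the tie $(\vf,\vect,\zervf)$ is satisfiable there, where $\zervf$ encodes the empty condition. Finally I would apply Theorem \ref{Thm:CorrectAlgTree} with $h=b=\lvf$, which states precisely that the tie $(\vf,\vect,\zervf)$ is satisfiable in $\LSuma{a}{\Tr(\lvf,\lvf)}{\clF}$ if and only if the call $\CondSatSumFa(\vf,\vect,\zervf,\lvf,\lvf)$ returns $\true$. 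Composing these three equivalences yields the lemma.

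Since every step is a direct appeal to a prior result, there is no genuine obstacle in the mathematics; the only thing to watch is the correct instantiation of parameters, namely that the height/branching arguments passed to $\CondSatSumFa$ match the $\Tr(\lvf,\lvf)$ bound coming from Theorem \ref{thm:smll-tree-for-Csat}, and that $\zervf$ is indeed the condition for which Proposition \ref{prop:sat_via_cSat-vect} reduces satisfiability of $\vf$ to satisfiability of a tie. The real content lives in Theorem \ref{Thm:CorrectAlgTree} (correctness of the recursive procedure) and in Theorem \ref{thm:smll-tree-for-Csat} (the small-tree property for sums); this lemma merely records their composition in the form needed for the $\PSPACE$ reduction of Theorem \ref{thm:main_compl_resCSat}.
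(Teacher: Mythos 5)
Your proposal matches the paper's own argument exactly: the paper derives Lemma \ref{Lem:nnSat} by the same three-step chain, first applying Theorem \ref{thm:smll-tree-for-Csat}(\ref{thm:smll-tree-for-Csat-itemSat}) to pass to $\LSuma{a}{\Tr(\lvf,\lvf)}{\clF}$, then Proposition \ref{prop:sat_via_cSat-vect} to restate satisfiability as satisfiability of a tie $(\vf,\vect,\zervf)$ with $\vect(0)=1$, and finally Theorem \ref{Thm:CorrectAlgTree} with $h=b=\lvf$ to replace tie satisfiability by the corresponding call to $\CondSatSumFa$. The parameter instantiation you flag is exactly the one the paper uses, so the proof is correct and essentially identical.
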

Assume that $\clI$ is closed under finite disjoint unions.
By  Theorem \ref{thm:smll-tree-for-Csat}(\ref{thm:smll-tree-for-Csat-itemCSat}),
a tie  $(\vf,\vect,\vectU)$ is satisfiable in
 $\LSuma{a}{\clI}{\clF}$
 iff
    it is satisfiable in   ${\Disk{\lvf}\LSuma{a}{\Tr(\lvf,\lvf)}{\clF}}$.   By  Lemma
    \ref{psp:lem:satsumm-decomp-wide-poly}, this means that
    there exists $k\leq \lvf$ and tuples $\vect^{(0)},\dts,\vect^{(k-1)}\in 2^\lvf$ such that
$\vectu=\sum_{i<k}\vect^{(i)}$ and
$(\vf,\vect^{(i)},\vectU,\lvf,\lvf)$
is satisfiable in
$\LSuma{a}{\Tr(\lvf,\lvf)}{\clF}$
 for every $i<k$.
Using Theorem
\ref{Thm:CorrectAlgTree} again, we obtain

\begin{lemma}\label{Lem:nnCSat}
      If
       $\clI$ is closed under finite disjoint unions,
       then a tie
       $(\vf,\vect,\vectU)$
       is satisfiable in
      $\LSuma{a}{\clI}{\clF}$
      iff
        there exist $k\leq \lvf$ and tuples $\vect^{(0)},\dts,\vect^{(k-1)}\in 2^\lvf$ such that
$\vectu=\sum_{i<k}\vect^{(i)}$ and
$\CondSatSumFa(\vf,\vect^{(i)},\vectU,\lvf,\lvf)$  returns $\true$  for every $i<k$.
\end{lemma}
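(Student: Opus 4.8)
The plan is to obtain Lemma \ref{Lem:nnCSat} by composing three results already in hand, in direct parallel with the derivation of Lemma \ref{Lem:nnSat} but now carrying the condition $\vectU$ through the argument and using, in addition, the closure of $\clI$ under finite disjoint unions. Throughout, let $\tau=(\vf,\vect,\vectU)$ denote the tie in question and $\oGamma$ the condition represented by $\vectU$.

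First I would reduce the unbounded index frame to a small disjoint union of small tree-sums. Since $\clI$ contains all finite trees and is closed under finite disjoint unions, part (\ref{thm:smll-tree-for-Csat-itemCSat}) of Theorem \ref{thm:smll-tree-for-Csat} applies, and its sharp form (\ref{eq:main}) gives that $\tau$ is satisfiable in $\LSuma{a}{\clI}{\clF}$ iff $\tau$ is satisfiable in $\Disk{\lvf}{\LSuma{a}{\Tr(\lvf,\lvf)}{\clF}}$. This is the only step that uses the disjoint-union closure of $\clI$: it is exactly what licenses the passage from an arbitrary Noetherian index to at most $\lvf$ copies of a tree-sum of bounded height and branching.

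Next I would apply the disjoint-union decomposition, Lemma \ref{psp:lem:satsumm-decomp-wide-poly}, with $\clG=\LSuma{a}{\Tr(\lvf,\lvf)}{\clF}$ and $b=\lvf$. Because a disjoint union adds no edges across components, the condition $\oGamma$ (hence $\vectU$) is inherited unchanged by every summand, and the lemma yields that $\tau$ is satisfiable in $\Disk{\lvf}{\clG}$ iff there are a positive $k\leq\lvf$ and vectors $\vect^{(0)},\dots,\vect^{(k-1)}\in 2^\lvf$ with $\vect=\sum_{i<k}\vect^{(i)}$ such that each $(\vf,\vect^{(i)},\vectU)$ is satisfiable in $\clG$.

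Finally I would discharge the remaining semantic side conditions algorithmically: instantiating Theorem \ref{Thm:CorrectAlgTree} at $h=b=\lvf$ replaces, for each $i<k$, the clause ``$(\vf,\vect^{(i)},\vectU)$ is satisfiable in $\LSuma{a}{\Tr(\lvf,\lvf)}{\clF}$'' by ``$\CondSatSumFa(\vf,\vect^{(i)},\vectU,\lvf,\lvf)$ returns $\true$''. Chaining the three equivalences gives the lemma. I do not anticipate a genuine obstacle: all the difficulty is delegated to the cited results---the semantic bounding of the index frame (Theorem \ref{thm:smll-tree-for-Csat}) and the correctness of the recursion (Theorem \ref{Thm:CorrectAlgTree}, itself proved from Lemma \ref{lem:psp:SatSumTree-poly} by induction on $h$). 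The only care needed is bookkeeping: the vector being split in clause (1) is the characteristic vector $\vect$ of $\Phi$ (not the condition vector), and the positivity of $k$ supplied by Lemma \ref{psp:lem:satsumm-decomp-wide-poly} is retained, so the bound $k\leq\lvf$ in the statement is met with $k\geq 1$.
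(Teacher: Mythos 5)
Your proposal is correct and follows exactly the paper's own derivation: Theorem \ref{thm:smll-tree-for-Csat}(\ref{thm:smll-tree-for-Csat-itemCSat}) to pass to $\Disk{\lvf}{\LSuma{a}{\Tr(\lvf,\lvf)}{\clF}}$, then Lemma \ref{psp:lem:satsumm-decomp-wide-poly} to split the disjoint union, then Theorem \ref{Thm:CorrectAlgTree} to replace each satisfiability clause by a call to $\CondSatSumFa$. The bookkeeping remarks (which vector is split, positivity of $k$) are accurate and consistent with the paper.
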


Set $n=\lvf$. Let us estimate the amount of space used by $\CondSatSumFa$ for the case $n=h=b$.
At each call $\CondSatSumFa$ needs $\BigO(n^2)$ space
to store new variables.
The depth of recursion is bounded by $n$.
Thus, 
we can reduce
$\Sat{\LSuma{a}{\clI}{\clF}}$ to the conditional satisfiability problem on $\clF$ in $\BigO(n^3)$ space by  Lemma \ref{Lem:nnSat}. This proves the first statement of the theorem.
In the case when   $\clI$ is closed under finite disjoint unions,
$\CSat{\LSuma{a}{\clI}{\clF}}$ is also reducible to the conditional satisfiability problem on $\clF$ in $\BigO(n^3)$ space by Lemma \ref{Lem:nnCSat}. This proves the second statement of the theorem.
\end{proof}

\begin{algorithm}[t]
\SetAlgoNoLine
\caption{Decision procedure for $\CSat \LSuma{a}{\Tr(h,b)}{\clF}$
with an oracle for $\CSat{\clF}$}
\label{alg:SatSumFa-poly}

\SetKwFunction{mCsat}{$\CondSatSumFa$}
\mCsat{$\vf,\vect,\vectU,h,b$}:${boolean}$
\\ \Indp
\KwIn{A tie $(\vf,\vect,\vectU)$; positive integers $h,b$}

\lIf*{$(\vf,\vect,\vectU)$ is satisfiable in $\clF$}{ \Return ${\true}$\;}
{
\lIf*{$h>1$}
{
\\ \Indp
\lFor*{$k$ such that $1\leq k\leq b$}
{
\lFor*{$\vectu,\vect_0,\dts,\vect_{k-1}\in 2^{\lvf}$ such that $\vect=\vectu+ \sum_{i<k} \vect_{i}$}
{
\\ \Indp
\lIf*{$(\vf,\vectu, \vectU\plusa  \sum_{i<k} \vect_{i})$ is satisfiable in $\clF$}
{
\\ \Indp
\lIf*{$\bigwedge\limits_{i<k}\CondSatSumFa(\vf,\vect_i,\vectU,h-1,b)$}
{
\Return ${\true}$\;
}
}
}
}
}
}
\Return ${\false}$.
\end{algorithm}

\hide{

\begin{algorithmic}
\Require  формула $\vf$, булев вектор  $\vect$ длины $\lvf$, кортеж ${\vectU=(\vectu_0,\ldots,\vectu_{N-1})}$, состоящий из булевых векторов длины $\lvf$,
$h>0$, $b\geq 0$,
\Function{$\CondSatSumFa$}{$\vf$,$\vect$,$\vectU$, $h$,$b$}:{boolean}
\If{$\CondSatF(\vf,\vect,\vectU)$} \Return \textbf{True}
\EndIf
\If{$h>1$}
\ForAll{$k$ such that $1\leq k\leq b$}
\ForAll{$\vectu,\vect_0,\dts,\vect_{k-1}\in 2^{\lvf}$ such that
$\vect=\vectu+ \vect_0 +\dts +\vect_{k-1}$
 }
\If{
$\CondSatF(\vf,\vectu, \vectU\plusa(\vect_0 +\dts +\vect_{k-1}))$
}
\If{$\bigwedge\limits_{i<k}\CondSatSumFa(\vf,\vect_i,\vectU,h-1,b)$}
\State\textbf{return True}
\EndIf
\EndIf
\EndFor
\EndFor
\EndIf
\State\textbf{return False}
\EndFunction
\end{algorithmic}
\end{algorithm}
}

\subsection{Reductions between  $\Sat{\clF}$, $\CSat{\clF}$, and $\Sat{\univ{\clF}}$}\label{Sec:SatviaCSat}
By Proposition \ref{prop:sat_via_cSat-vect},
the satisfiability problem on a class $\clF$ is
polynomial space Turing reducible to the conditional satisfiability problem on $\clF$.
Let $\RedKarp$ denote the polynomial-time many-to-one reduction.
Below we show that in many cases $\CSat{\clF}$ is polynomial time reducible to $\Sat{\clF}$, that is
$\CSat{\clF}\RedKarp \Sat{\clF}$;
hence, in these cases,
the above two problems are equivalent with respect to $\RedPSP$.
This fact is close to a result
obtained in  \cite{SpaanUniv}, where it was shown that in many situations there exists
a (stronger than $\RedPSP$, but weaker\footnote{I am using ``stronger'' and ``weaker'' in a non-strict sense.}  than $\RedKarp$) reduction of $\Sat{\univ{\clF}}$ to $\Sat{\clF}$.
Let us discuss reductions between the above two problems in more details.
\improve{Reread; what is the role of $\Sat{\univ{\clF}}$?}

For a binary relation $R$ on a set $W$, put $R^{\leq m}=  \bigcup_{n \leq m} R^n$, where $R^0$ is the identity relation on $W$, $R^{n+1}=R\circ R^n$
($\circ$ denotes the composition of relations).
Recall that  $R^*$ denotes the transitive reflexive closure of $R$: $R^*=\bigcup_{n <\omega} R^n$.
$R$ is said to be {\em $m$-transitive} if $R^{\leq m}$ includes $R^{m+1}$,  or equivalently, $R^{\leq m}=R^*$  (e.g., every transitive relation is 1-transitive).
For a frame $\frF=(W,(R_a)_{a\in \Al})$, put $R_\frF=\bigcup_{a\in \Al} R_a$.
We say that $\frF$
is {\em $m$-transitive} if the relation $R_\frF$ is.
In particular, if one of the relations of $\frF$ is universal (i.e., is equal to $W\times W$), then
 $\frF$ is 1-transitive.

For $w$ in $\frF$, let $\cone{\frF}{w}$ denote the subframe of $\frF$ generated by the singleton $\{w\}$ (that is,
$\cone{\frF}{w}$ is the restriction of $\frF$ on the set $\{v\mid w R^*_\frF v\}$); such frames are called {\em cones}).
A class $\clF$ of frames is {\em closed under taking cones} if
for every $\frF$ in $\clF$  and for every $w$ in $\frF$, the cone $\cone{\frF}{w}$ is in $\clF$.

A class $\clF$ of $\AlA$-frames is said to be  {\em preconical},  \improve{\todo{improve: a better term?}}
if 
\begin{itemize}
\item $\clF$ is closed under taking cones, and
\item there exists $m<\omega$ such that every frame in $\clF$ is $m$-transitive, and
\item
for every $\frF$ in $\clF$,
the relation $R_\frF^*$ is downward directed (i.e.,
for every $w,v$ in $\frF$ there exists $u$
such that $u R_\frF^* w$ and $u R_\frF^* v$).
\end{itemize}

\begin{proposition}\label{prop:CsatToSat-precon}
Let $\Al$ be finite and $\clF$ a class of $\Al$-frames.
If $\clF$ is preconical, then
 $\CSat{\clF}\RedKarp\Sat{\clF}$.
\end{proposition}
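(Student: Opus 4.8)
The plan is to reduce the conditional satisfiability problem to ordinary satisfiability in two conceptual moves: first eliminate the syntactic condition $\oGamma$ by absorbing it into the formula, and then simulate the implicit ``global'' quantifier hidden in the requirement $\Phi=\SFMG\vf\mM\oGamma$ by a bounded modality along $R_\frF=\bigcup_{a<\Al}R_a$, exploiting all three clauses in the definition of a preconical class. Both moves are manifestly polynomial, so their composition will be the desired $\RedKarp$-reduction.

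First I would fix a condition $\oGamma=(\Gamma_a)_{a<\Al}$ with $\Gamma_a\se\SF\vf$ and define a translation $\chi\mapsto\chi^{\oGamma}$ that commutes with Boolean connectives, fixes variables and $\bot$, and sets $(\Di_a\chi)^{\oGamma}=\top$ whenever $\chi\in\Gamma_a$ and $(\Di_a\chi)^{\oGamma}=\Di_a(\chi^{\oGamma})$ otherwise. A routine induction on $\chi$ mirroring Definition \ref{def:sat-mod} then gives $\cmo\mM w\oGamma\chi \tiff \mM,w\mo\chi^{\oGamma}$ for every model $\mM$ and world $w$; consequently $\SFMG\vf\mM\oGamma=\{\psi_i\in\SF\vf\mid \mM,w\mo\psi_i^{\oGamma}\text{ for some }w\}$. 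Since $\oGamma$ consists of subformulas of $\vf$, the map is computable in polynomial time with $|\chi^{\oGamma}|=O(|\chi|)$, so the condition is now entirely encoded syntactically.

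Next, let $\Boxm$ and $\Di^{\le m}$ abbreviate the modal formulas expressing, respectively, that a formula holds at all / at some world reachable by $R_\frF$ in at most $m$ steps; as $m$ is fixed for $\clF$ and $\Al$ is finite, both have size $O(|\chi|)$, and $R_\frF^{\le m}$ contains the identity so $\Boxm\chi\imp\chi$ is valid. Listing $\SF\vf=\{\psi_0,\dots,\psi_{\lvf-1}\}$, the reduction would output
\[ \Xi\;:=\;\bigwedge_{\psi_i\in\Phi}\Di^{\le m}(\psi_i^{\oGamma})\;\wedge\;\bigwedge_{\psi_i\notin\Phi}\Boxm\,\neg(\psi_i^{\oGamma}). \]
For the forward direction, take a witnessing model $\mM$ on $\frF\in\clF$ with $\SFMG\vf\mM\oGamma=\Phi$, and for each $\psi_i\in\Phi$ pick $w_i$ with $\mM,w_i\mo\psi_i^{\oGamma}$. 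By downward directedness of $R_\frF^*$, iterated over the finite set $\{w_i\}$ and using transitivity of $R_\frF^*$, there is a common ancestor $u$; by closure under cones $\cone\frF u\in\clF$, and passing to the generated submodel $\mM'$ on $\cone\frF u$ preserves truth of the plain formulas $\psi_i^{\oGamma}$. By $m$-transitivity, $\dom{\cone\frF u}=R_\frF^*(u)=R_\frF^{\le m}(u)$, so each $w_i$ is $R_\frF^{\le m}$-reachable from $u$, giving $\mM',u\mo\Di^{\le m}(\psi_i^{\oGamma})$; for $\psi_i\notin\Phi$ the formula $\psi_i^{\oGamma}$ holds nowhere in $\frF$, a fortiori nowhere in $\cone\frF u$, so $\mM',u\mo\Boxm\neg(\psi_i^{\oGamma})$. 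Thus $\mM',u\mo\Xi$.

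Conversely, if $\mM,u\mo\Xi$ on some $\frF\in\clF$, I would pass to the generated submodel $\mM'$ on $\cone\frF u\in\clF$, whose domain is exactly $R_\frF^{\le m}(u)$ by $m$-transitivity; the two conjunctions of $\Xi$ then say precisely that $\psi_i^{\oGamma}$ is satisfied somewhere in $\mM'$ for $\psi_i\in\Phi$ and nowhere for $\psi_i\notin\Phi$, i.e.\ $\SFMG\vf{\mM'}\oGamma=\Phi$, so the tie is satisfiable in $\clF$. The main obstacle is exactly this second move: expressing the global statement ``$\psi_i^{\oGamma}$ is (is not) satisfied somewhere in the model'' by a local modal formula. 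This is where all of preconicality is consumed — downward directedness supplies a single root below the finitely many witnesses, $m$-transitivity guarantees that $\Di^{\le m}/\Boxm$ evaluated at that root sweep the whole generated cone, and closure under cones keeps us inside $\clF$; coordinating these three properties is the delicate point, whereas the conditionalization step and the polynomial-size bookkeeping are routine.
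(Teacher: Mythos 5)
Your proposal is correct and follows essentially the same route as the paper's proof: the translation $\chi\mapsto\chi^{\oGamma}$ is exactly the paper's $\trc{\chi}{\oGamma}$, your formula $\Xi$ coincides with the paper's $\delta_m(\vf,\vect,\vectU)$ (writing $\Boxm\neg$ for $\neg\Di^{\leq m}$), and the three clauses of preconicality are consumed in the same places (directedness for the common ancestor, $m$-transitivity so that $\Di^{\leq m}$ sweeps the cone, closure under cones for the converse). The only cosmetic difference is that you also pass to the generated cone in the forward direction, which is harmless but unnecessary since the original frame already lies in $\clF$.
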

\begin{proof}

Given a condition $\oGamma$ and a formula $\vf$,
we define the formula $\trc{\vf}{\oGamma}$ as follows:
${\trc{\bot}{\oGamma}=\bot}$,  $\trc{p}{\oGamma}=p$ for variables, $\trc{\vf_1\imp\vf_2}{\oGamma} =\trc{\vf_1}{\oGamma}\imp\trc{\vf_2}{\oGamma}$, and
\[
\trc{\Di_a \vf}{\oGamma}=\left\{
\begin{array}{ll}
\top,& \text{ if } \vf\in \oGamma(a), \\
\Di_a\trc{\vf}{\oGamma} &\text{ otherwise.}
\end{array}\right.
\]
For every $\Al$-model $\mM$, we have:
\begin{equation}\label{eq:gammaTr-model}
\cmo{\mM}{w}{\oGamma}{\vf} \;\tiff\; \mM,w\mo\trc{\vf}{\oGamma}
\end{equation}
The proof is straightforward, see \cite[Lemma 4.7]{AiML2018-sums} for the details.

Let $\Di\vf$ abbreviate the $\Al$-formula $\bigvee_{a\in \Al} \Di_a \vf$, and let
$\Di^0\vf =\vf$, $\Di^{m+1}\vf=\Di\Di^m\vf$,
$\Di^{\leq m}\vf=\bigvee_{n\leq m} \Di^n \vf$.

For a tie $(\vf,\vect, \vectU)$ with $\vectU=(\vectu_a)_{a\in\Al}$, we put
\begin{equation}
\delta_m(\vf,\vect,\vectU)=
\bigwedge_{\psi\in \fv{\vf}{\vect}} \Di^{\leq m} \trc{\psi}{\oGamma}
\con
\bigwedge_{\psi\in \SF{\vf}{\setminus}\fv{\vf}{\vect}}  \neg \Di^{\leq m} \trc{\psi}{\oGamma},
\end{equation}
where $\oGamma$ is the condition represented by $\vectU$, i.e., $\oGamma=(\fv{\vf}{\vectu_a})_{a\in\Al}$.

Since $\clF$ is preconical, there exists a finite $m$ such that every frame in $\clF$ is $m$-transitive. We claim that
\begin{equation}
(\vf,\vect,\vectU)\in \CSat{\clF} \quad \tiff \quad \delta_m(\vf,\vect,\vectU)\in \Sat{\clF}.
\end{equation}

First, assume that $(\vf,\vect,\vectU)$ is satisfiable in a frame $\frF\in\clF$. This means that
for a model $\mM$ based on $\frF$ we have $\fv{\vf}{\vect}=\SFMG{\vf}{\mM}{\oGamma}$,
where $\oGamma$ is the condition represented by $\vectU$.
For every $\psi\in \fv{\vf}{\vect}$ we choose a point $w_\psi$ such that $\cmo{\mM}{w_\psi}{\oGamma}{\vf}$, and then put
$V=\{w_\psi\mid \psi\in \fv{\vf}{\vect} \}$.
The relation $R_\frF^*$ is downward directed, hence there exists a point $w$ in $\mM$ such that
$w R_\frF^* v$ for all $v$ in $V$; by $m$-transitivity, $w R_\frF^{\leq m} v$.
It follows that if $\psi\in \fv{\vf}{\vect}$, then ${\mM,w \mo\Di^{\leq m} \trc{\psi}{\oGamma}}$: indeed,
we have
${\mM,w_\psi \mo \trc{\psi}{\oGamma}}$ by (\ref{eq:gammaTr-model}) and $w R_\frF^{\leq m} w_\psi$.
On the other hand, if a subformula $\psi$ of $\vf$ is not in $\fv{\vf}{\vect}$, then
 $\trc{\psi}{\oGamma}$ is false at every point in $\mM$ by (\ref{eq:gammaTr-model}), and so
$\mM,w\mo \neg \Di^{\leq m}\trc{\psi}{\oGamma}$.
It follows that
the formula $\delta_m(\vf,\vect,\vectU)$ is satisfiable in $\clF$.\improve{\todo{Read}}

Now assume that $\delta_m(\vf,\vect,\vectU)$ is true at a point $w$ in a model $\mM$ over a frame $\frF\in\clF$.
Since $\clF$ is closed under taking cones, we may assume that $\frF=\cone{\frF}{w}$ (recall that
if a formula is true at a point in a model, then it is true at this point in the model generated by this point).
Let $\psi$ be a subformula of $\vf$. Suppose that $\psi\in \fv{\vf}{\vect}$.
Then $\mM,w\mo \Di^{\leq m}\trc{\psi}{\oGamma}$. Hence,
the formula $\trc{\psi}{\oGamma}$ is true at a point $u$ of $\mM$, which means that
$\cmo{\mM}{u}{\oGamma}{\psi}$
by (\ref{eq:gammaTr-model}). Thus,  $\psi\in \SFMG{\vf}{\mM}{\oGamma}$.
On the other hand, if $\psi\nin \fv{\vf}{\vect}$, then
$\mM,w\mo \neg \Di^{\leq m} \trc{\psi}{\oGamma}$; by $m$-transitivity, we obtain
that $\trc{\psi}{\oGamma}$ is false at every point of $\mM$; using (\ref{eq:gammaTr-model}) again, we obtain
$\psi\nin \SFMG{\vf}{\mM}{\oGamma}$. Thus, $\fv{\vf}{\vect}=\SFMG{\vf}{\mM}{\oGamma}$.
Since $\oGamma=(\fv{\vf}{\vectu_a})_{a\in\Al}$,  the tie
$(\vf,\vect,\vectU)$ is satisfiable in $\clF$.
\end{proof}

\begin{proposition}\label{prop:CsatToSatUniv}
If $\Al$ is finite and $\clF$ is a class of $\Al$-frames, then
$\CSat{\clF}\RedKarp\Sat{\univ{\clF}}$.
\end{proposition}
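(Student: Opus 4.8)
The plan is to reduce $\CSat{\clF}$ to $\Sat{\univ{\clF}}$ by turning a tie into a single $(1+\Al)$-formula that exploits the universal modality $\Di_0$ of $\univ{\frF}$ to quantify over all points of a model, thereby recording, for each subformula of $\vf$, whether it is realized under the condition $\oGamma$. First I would reuse the condition-eliminating translation $\trc{\cdot}{\oGamma}$ from the proof of Proposition \ref{prop:CsatToSat-precon}, composing it with a shift of modalities so that the $\Al$ modalities of a summand frame become the \emph{non}-universal modalities of its universal extension.

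Concretely, given a tie $(\vf,\vect,\vectU)$ with $\oGamma=(\fv{\vf}{\vectu_a})_{a\in\Al}$, I would define a translation $\psi\mapsto\psi^{*}$ from $\Al$-formulas to $(1+\Al)$-formulas by $\bot^{*}=\bot$, $p^{*}=p$ for variables, $(\vf_1\imp\vf_2)^{*}=\vf_1^{*}\imp\vf_2^{*}$, and
\[
(\Di_a\psi)^{*}=\begin{cases}\top,&\text{if }\psi\in\oGamma(a),\\[2pt] \Di_{a+1}\psi^{*},&\text{otherwise.}\end{cases}
\]
This is exactly $\trc{\cdot}{\oGamma}$ followed by the renaming $\Di_a\mapsto\Di_{a+1}$. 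The key correspondence I would then establish is that for every model $\mM$ on a frame $\frF$ in $\clF$, with $\univ{\mM}$ the model on $\univ{\frF}$ having the same valuation, and for every subformula $\psi$ of $\vf$ and every point $w$,
\[
\cmo{\mM}{w}{\oGamma}{\psi}\quad\tiff\quad \univ{\mM},w\mo\psi^{*}.
\]
This follows from (\ref{eq:gammaTr-model}) together with a routine induction verifying that the $(a+1)$-st relation of $\univ{\frF}$ is the $a$-th relation of $\frF$, so that $\Di_{a+1}$ in $\univ{\mM}$ behaves as $\Di_a$ does in $\mM$.

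With this in hand I would set the target formula to be
\[
\chi=\bigwedge_{\psi\in\fv{\vf}{\vect}}\Di_0\psi^{*}\;\con\;\bigwedge_{\psi\in\SF{\vf}\setminus\fv{\vf}{\vect}}\neg\Di_0\psi^{*}.
\]
Since the $0$-th relation of $\univ{\frF}$ is $W\times W$, at any point $\Di_0\alpha$ holds iff $\alpha$ is true somewhere in the model; hence $\univ{\mM},w\mo\Di_0\psi^{*}$ iff $\psi\in\SFMG{\vf}{\mM}{\oGamma}$, independently of $w$. Therefore $\chi$ is satisfiable in some $\univ{\mM}$ iff $\fv{\vf}{\vect}=\SFMG{\vf}{\mM}{\oGamma}$, i.e.\ iff the tie is satisfiable in $\clF$; this yields both directions of ${(\vf,\vect,\vectU)\in\CSat{\clF}}$ iff ${\chi\in\Sat{\univ{\clF}}}$, using that the models on $\univ{\clF}$ are precisely the universal extensions $\univ{\mM}$ of models $\mM$ on $\clF$.

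Finally I would check that the reduction is polynomial time: each $\psi^{*}$ has size $\BigO(|\psi|)$ (the recursion only renames or deletes modalities), and $\chi$ is a conjunction of at most $\lvf$ such formulas prefixed by $\Di_0$ or $\neg\Di_0$, so $|\chi|=\BigO(\lvf\cdot|\vf|)$ and $\chi$ is computable from the tie in polynomial time. The only point demanding care is the correspondence between $\cmo{\cdot}{\cdot}{\oGamma}{\cdot}$ and $\mo$ under the simultaneous condition-elimination and modality shift; everything else is bookkeeping, and the global behaviour of $\Di_0$ is immediate from the universality of $W\times W$.
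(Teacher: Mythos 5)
Your proof is correct and is essentially the paper's own argument: the paper first shifts modality indices to get $\CSat{\clF}\RedKarp\CSat{\univ{\clF}}$ and then observes that $\univ{\clF}$ is preconical (with $m=1$, cones trivial, and $R^*_{\frF}$ universal) so that Proposition \ref{prop:CsatToSat-precon} applies, and your formula $\chi$ is exactly the resulting $\delta_1$ with $\Di^{\leq 1}$ simplified to $\Di_0$ using the universality of the $0$-th relation. You merely inline the specialization of Proposition \ref{prop:CsatToSat-precon} rather than citing it, which changes nothing of substance.
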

\begin{proof}
It is trivial that $\CSat{\clF}\RedKarp\univ{\CSat{\clF}}$
(the reduction increases by one the indexes of modalities occurring in formulas of a given tie).

The class $\univ{\clF}$ is preconical: if $\frF\in\clF$, then $\frF$ is 1-transitive,
$\cone{\frF}{w}=\frF$ for every $w\in \frF$, and $R_\frF^*$ is downward-directed, since $R_\frF$ is the universal relation on $\frF$.
Hence,  $\CSat{\univ{\clF}}\RedKarp \Sat{\univ{\clF}}$ by Proposition \ref{prop:CsatToSat-precon}.
\end{proof}

Now let us describe a reduction of $\Sat{\univ{\clF}}$ to $\CSat{\clF}$.
This reduction is based on the following construction proposed in
\cite{SpaanUniv}.
Let $\vf$ be a formula in the language $\ML(1+\Al)$.
For subformulas $\Di_0\psi$ of $\vf$ starting with $\Di_0$, we choose distinct variables  $p_\psi$ not occurring in $\vf$,
and put for subformulas of $\vf$:
$\trS{\bot}=\bot$;  $\trS{p}=p$ for variables; $\trS{(\vf_1\imp\vf_2)}=\trS{\vf_1}\imp\trS{\vf_2}$;
$\trS{(\Di_a\psi)}=\Di_a\trS{\psi}$ for $a>0$; and
$\trS{(\Di_0\psi)}=p_{\psi}$.
We have for a class $\clF$ of $\Al$-frames:
\begin{equation}\label{eq:spaan-univ}
\vf\in\Sat{\univ{\clF}} \tiff
\trS{\vf}\wedge \bigwedge_{\Di_0\psi\in \SF{\vf}} \left(
(\Di_0\trS{\psi}\leftrightarrow \Box_0 p_{\psi})
\con (\Box_0 p_\psi \vee \Box_0 \neg p_\psi)
\right) \in \Sat{\univ{\clF}},
\end{equation}
see \cite[Lemma 4.5]{SpaanUniv} for details.\footnote{
The formula in (\ref{eq:spaan-univ}) is an equivalent form of the formula $\vf_{flat}$ used in
\cite[Lemma 4.5]{SpaanUniv}.}

\improve{\todo{No one cares about the difference between this formulation and the one given by Spaan (in \cite{SpaanUniv}, ``closure'' is used). The above is more similar to my AiML06).
So double check for yourself and forget about this boring step.
}}

Let $\eta$ denote the formula in the right-hand side of 
the above equivalence.
Let us show how the satisfiability of $\eta$ in  $\univ{\clF}$ can be expressed as the satisfiability of a tie in $\clF$.
Consider the formula
$$\xi_\vf=\trS{\vf} \con \bigwedge_{\Di_0\psi\in\SF{\vf}}(\trS{\psi} \wedge \neg p_\psi)$$
(the only role of the second conjunct is to have
$\trS{\psi}$, $\neg p_\psi$, $p_\psi$ that occur in $\eta$ as subformulas of $\xi_\vf$).
Let $\mM$ be a model   on a frame in $\univ{\clF}$, and let
$\Phi$ be the set of
subformulas of $\xi_\vf$ that are satisfiable in $\mM$, that is, $\Phi=\SFMG{\xi_\vf}{\mM}{(\emp)_{1+\Al}}$.
Then $\eta$ is true at a point in $\mM$ iff
\begin{equation}\label{eq:spaan-to-tie}
\trS{\vf}\in \Phi \text{ and for every }\psi \text{ with }  \Di_0\psi\in\SF{\vf},~\trS{\psi}\in \Phi \tiff  p_\psi\in \Phi \tiff \neg p_\psi\notin \Phi.
\end{equation}
\todo{DC}
It follows that
$\eta$ is satisfiable in $\univ{\clF}$ iff there exists
$\Phi\subseteq\SF{\xi_\vf}$ satisfying (\ref{eq:spaan-to-tie}) and the tie $(\xi_\vf, \Phi,(\emp)_{1+\Al})$ is satisfiable in $\univ{\clF}$.
The formula $\xi_\vf$ and so, the formulas in $\Phi$, do not contain $\Di_0$.
Hence, the satisfiability of  $(\xi_\vf, \Phi,(\emp)_{1+\Al})$ in $\univ{\clF}$ is equivalent to
the satisfiability of $(\trDec{\xi_\vf}, \trDec{\Phi},(\emp)_{\Al})$ in $\clF$,
where $\trDec{\xi_\vf}$ and $\trDec{\Phi}$ are obtained by decreasing indexes of modalities by 1.
\hide{
Putting everything together,
we have proven
\begin{proposition}\label{prop:satU_to_CSat}
Let $\Al$ be finite and $\clF$ a class of $\Al$-frames. Then
$$\vf\in\Sat{\univ{\clF}} \tiff \text{ there exists }
\Phi\subseteq\SF{\xi_\vf}  \text{ satisfying (\ref{eq:spaan-to-tie}) } \text{ such that }
(\trDec{\xi}, \trDec{\Phi},(\emp)_{\Al})  \text{ is satisfiable in }  \clF.
$$
\end{proposition}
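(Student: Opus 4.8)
The plan is to chain the three equivalences already assembled in the discussion above, so the argument is essentially bookkeeping once each link is in place. First I would invoke the Spaan translation recorded in (\ref{eq:spaan-univ}): it reduces $\vf\in\Sat{\univ{\clF}}$ to $\eta\in\Sat{\univ{\clF}}$, where $\eta$ is the right-hand side of that equivalence. This link is quoted from \cite[Lemma 4.5]{SpaanUniv} and needs no further argument.

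Next I would pass from the modal formula $\eta$ to the language of ties. For a model $\mM$ on a frame in $\univ{\clF}$, set $\Phi=\SFMG{\xi_\vf}{\mM}{(\emp)_{1+\Al}}$, the set of subformulas of $\xi_\vf$ satisfiable in $\mM$. The auxiliary formula $\xi_\vf$ was chosen precisely so that $\trS{\vf}$, and each $\trS{\psi}$, $p_\psi$, $\neg p_\psi$ occurring in $\eta$, appear as subformulas of $\xi_\vf$ and are therefore tracked by $\Phi$. The point to verify is that $\eta$ is true at some point of $\mM$ iff $\Phi$ satisfies the combinatorial condition (\ref{eq:spaan-to-tie}). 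Here the universal modality does the work: since $\Di_0$ is interpreted by $W\times W$, truth of $\Di_0\chi$ at one point is the global statement ``$\chi$ is satisfiable in $\mM$'', i.e.\ $\chi\in\Phi$, while $\Box_0\chi$ says ``$\chi$ holds everywhere''. Thus the conjunct $\Box_0 p_\psi\vee\Box_0\neg p_\psi$ forces $p_\psi$ to take a constant truth value across $\mM$, and then $\Di_0\trS{\psi}\leftrightarrow\Box_0 p_\psi$ collapses to $\trS{\psi}\in\Phi \tiff p_\psi\in\Phi \tiff \neg p_\psi\notin\Phi$, whereas $\trS{\vf}$ being satisfied is simply $\trS{\vf}\in\Phi$. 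Since the two $\Di_0/\Box_0$ clauses are global, they hold everywhere once they hold anywhere, so they may be combined with the point witnessing $\trS{\vf}$. Consequently $\eta$ is satisfiable in $\univ{\clF}$ iff some $\Phi\subseteq\SF{\xi_\vf}$ satisfying (\ref{eq:spaan-to-tie}) makes the tie $(\xi_\vf,\Phi,(\emp)_{1+\Al})$ satisfiable in $\univ{\clF}$.

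Finally I would remove the auxiliary universal relation. Since $\xi_\vf$ contains no occurrence of $\Di_0$, neither does any formula in $\Phi\subseteq\SF{\xi_\vf}$, so the $0$-th relation is never consulted in evaluating the tie $(\xi_\vf,\Phi,(\emp)_{1+\Al})$ over a frame $\univ{\frF}$. Decreasing every modality index by one therefore gives a faithful translation, and $(\xi_\vf,\Phi,(\emp)_{1+\Al})$ is satisfiable in $\univ{\clF}$ iff $(\trDec{\xi_\vf},\trDec{\Phi},(\emp)_{\Al})$ is satisfiable in $\clF$. Composing the three equivalences yields the proposition.

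I expect the middle step to be the only real obstacle: one must be disciplined about the fact that $\Phi$ records truth \emph{somewhere}, which under the universal modality coincides with the truth of $\Di_0$ \emph{everywhere}, so that the two global clauses of $\eta$ translate exactly into the membership conditions (\ref{eq:spaan-to-tie}) on $\Phi$. The first and last steps are routine: the former is cited, and the latter is the observation that an index shift is a satisfiability-preserving bijection on formulas once the shifted modality is inert.
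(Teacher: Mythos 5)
Your proposal is correct and follows essentially the same route as the paper: the Spaan flattening (\ref{eq:spaan-univ}), the translation of the global $\Di_0/\Box_0$ conjuncts of $\eta$ into the membership conditions (\ref{eq:spaan-to-tie}) on $\Phi=\SFMG{\xi_\vf}{\mM}{(\emp)_{1+\Al}}$ (using that $\Box_0 p_\psi\vee\Box_0\neg p_\psi$ forces $p_\psi$ to be constant), and the index shift justified by the fact that $\xi_\vf$ contains no $\Di_0$. The paper's argument is exactly this chain, laid out in the discussion preceding the proposition, so there is nothing to add.
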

}
Putting everything together,
we have shown that
$$\vf\in\Sat{\univ{\clF}} \tiff \text{ there exists }
\Phi\subseteq\SF{\xi_\vf}  \text{ satisfying (\ref{eq:spaan-to-tie}) } \text{ s. t.  }
(\trDec{\xi}, \trDec{\Phi},(\emp)_{\Al})  \text{ is satisfiable in }  \clF.
$$
This proves
\begin{proposition}\label{prop:satU_to_CSat}
If $\Al$ is finite and $\clF$ is a class of $\Al$-frames, then
$\Sat{\univ{\clF}} \RedPSP \CSat{\clF}$.
\end{proposition}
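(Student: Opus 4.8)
The plan is to package the equivalence established in the discussion immediately preceding the proposition into a polynomial-space oracle computation. That discussion shows that $\vf\in\Sat{\univ{\clF}}$ holds if and only if there is some $\Phi\subseteq\SF{\xi_\vf}$ satisfying condition~(\ref{eq:spaan-to-tie}) for which the tie $(\trDec{\xi_\vf},\trDec{\Phi},(\emp)_{\Al})$ is satisfiable in $\clF$, i.e.\ lies in $\CSat{\clF}$. So almost all of the real work is done, and what remains is a complexity bookkeeping argument. First I would record the size bounds underlying this equivalence. Given a formula $\vf$ of $\ML(1+\Al)$, the formula $\trS{\vf}$ is obtained by a straightforward recursive relabeling, and $\xi_\vf$ is the conjunction of $\trS{\vf}$ with one bounded-size conjunct per subformula of $\vf$ of the form $\Di_0\psi$; hence $\xi_\vf$, and therefore $\trDec{\xi_\vf}$, is computable from $\vf$ in polynomial time and has length polynomial in $\lvf$. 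In particular $\SF{\xi_\vf}$ has polynomially many elements, so every $\Phi\subseteq\SF{\xi_\vf}$ is representable by a bit vector of polynomial length, and likewise for $\trDec{\Phi}\subseteq\SF{\trDec{\xi_\vf}}$.

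Next I would describe the deterministic oracle machine. On input $\vf$ it computes $\trDec{\xi_\vf}$ and then enumerates, one at a time, all subsets $\Phi$ of $\SF{\xi_\vf}$, using a single polynomial-length bit vector as a counter. For each $\Phi$ it first checks condition~(\ref{eq:spaan-to-tie}), which is a conjunction of polynomially many membership tests over $\Phi$ and is therefore decidable in polynomial time. If $\Phi$ fails the test, the machine advances to the next subset; otherwise it writes the tie $(\trDec{\xi_\vf},\trDec{\Phi},(\emp)_{\Al})$ on the oracle tape and queries $\CSat{\clF}$. This query is a legitimate instance of the conditional satisfiability problem, since $\trDec{\Phi}\subseteq\SF{\trDec{\xi_\vf}}$ and the condition $(\emp)_{\Al}$ trivially satisfies $(\emp)_{\Al}\subseteq\SF{\trDec{\xi_\vf}}^\Al$. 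The machine returns $\true$ as soon as some query is answered affirmatively, and returns $\false$ once all subsets have been exhausted without a positive answer. Correctness is then immediate from the displayed equivalence: the machine accepts precisely when some admissible $\Phi$ produces a tie in $\CSat{\clF}$, which is exactly the condition $\vf\in\Sat{\univ{\clF}}$.

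The main obstacle I anticipate is the space analysis, and it is the one step I would pin down carefully rather than wave away. Although the enumeration ranges over exponentially many subsets $\Phi$, all of them are to be processed with the same reusable work space: the counter/bit vector encoding $\Phi$, the scratch storage for checking~(\ref{eq:spaan-to-tie}), and the oracle query string $(\trDec{\xi_\vf},\trDec{\Phi},(\emp)_{\Al})$ are each of length polynomial in $\lvf$ and are overwritten between iterations, so no cumulative blow-up occurs. Consequently every tape of the machine, including the oracle tape, stays within a polynomial space bound as required by the definition of $\RedPSP$, and we obtain $\Sat{\univ{\clF}}\RedPSP\CSat{\clF}$.
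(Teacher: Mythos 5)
Your proposal is correct and follows essentially the same route as the paper: the paper's proof consists precisely of the displayed equivalence between $\vf\in\Sat{\univ{\clF}}$ and the existence of a $\Phi\subseteq\SF{\xi_\vf}$ satisfying~(\ref{eq:spaan-to-tie}) with $(\trDec{\xi_\vf},\trDec{\Phi},(\emp)_{\Al})$ satisfiable in $\clF$, with the oracle machine left implicit. Your explicit enumeration of the subsets $\Phi$ with reusable polynomial work space is exactly the intended reduction, just spelled out in more detail.
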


\begin{remark}
In fact,  Proposition \ref{prop:satU_to_CSat} provides a stronger
reduction than $\RedPSP$.
\end{remark}

 \improve{mention Undecidability}

\subsection{Proof of Theorem \ref{thm:main_compl_res}}\label{sec:proofOfMain}

In view of the above propositions, Theorem \ref{thm:main_compl_res} is an easy corollary of Theorem \ref{thm:main_compl_resCSat}.
\begin{proof}[Proof of Theorem \ref{thm:main_compl_res}]
We obtain $\Sat{\LSuma{a}{\clI}{\clF}} \RedPSP \CSat{\clF}$ by Theorem \ref{thm:main_compl_resCSat}(\ref{thm:main_compl_resCSat_i1}),
and then \mbox{$\CSat{\clF} \RedKarp \Sat{\univ{\clF}}$}
 by Proposition \ref{prop:CsatToSatUniv}. Since $\RedKarp$ is stronger than $\RedPSP$,
 we obtain $\CSat{\clF} \RedPSP \Sat{\univ{\clF}}$. Hence, \mbox{$\Sat{\LSuma{a}{\clI}{\clF}}  \RedPSP \Sat{\univ{\clF}}$},
which proves the first statement of the theorem.

To prove the second statement, we start with
Proposition \ref{prop:satU_to_CSat} and obtain
$\Sat{\univ{(\LSuma{a}{\clI}{\clF})}} \RedPSP \CSat{(\LSuma{a}{\clI}{\clF})}$.
Now by  Theorem \ref{thm:main_compl_resCSat}(\ref{thm:main_compl_resCSat_i2}) and Proposition \ref{prop:CsatToSatUniv}, we obtain
$\CSat{\LSuma{a}{\clI}{\clF}} \RedPSP  \Sat{\univ{\clF}}$.
So $\Sat{\univ{(\LSuma{a}{\clI}{\clF})}} \RedPSP \Sat{\univ{\clF}}$, which completes the proof.
\end{proof}

Recall that in the preconical case, the conditional satisfiability problem is reducable to the standard modal satisfiability problem (Proposition \ref{prop:CsatToSat-precon}), and so $\CSat\clF$, $\Sat\clF$, and
$\Sat{\univ{\clF}}$ are $\RedPSP$-equivalent by Propositions \ref{prop:CsatToSatUniv}, \ref{prop:satU_to_CSat} (in general,
the decidability of $\Sat{\clF}$ does not imply the decidability of $\Sat{\univ{\clF}}$, see \cite[Theorem 4.2.1]{Spaan1993complexity}).
 In this case, Theorem \ref{thm:main_compl_res}
can be reformulated in the following way:
\begin{corollary}\label{thm:main_compl_resConical}
Let $a <\AlM<\omega$, $\clF$ a class of $\AlM$-frames, and
  $\clI$  a class of Noetherian orders containing all finite trees.
If $\clF$ is preconical,  then:
  \begin{enumerate}
\item   $\Sat{\LSuma{a}{\clI}{\clF}} \RedPSP   \Sat{\clF}$. \improve{\todo{Why? Is it true? Yes! Trivially.}}
\item   If also $\clI$ is closed under finite disjoint unions, then
$\Sat{\univ{(\LSuma{a}{\clI}{\clF})}} \RedPSP    \Sat{\clF}$.
\end{enumerate}
\end{corollary}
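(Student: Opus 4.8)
The plan is to obtain both statements as direct compositions of reductions already established, using only that $\RedPSP$ is transitive and that $\RedKarp$ is stronger than $\RedPSP$. The role of preconicality is purely to replace the target $\Sat{\univ{\clF}}$ in Theorem \ref{thm:main_compl_res} by the simpler $\Sat{\clF}$, via Proposition \ref{prop:CsatToSat-precon}.

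For the first statement I would start from Theorem \ref{thm:main_compl_resCSat}(\ref{thm:main_compl_resCSat_i1}), which gives $\Sat{\LSuma{a}{\clI}{\clF}} \RedPSP \CSat{\clF}$. Since $\clF$ is preconical, Proposition \ref{prop:CsatToSat-precon} yields $\CSat{\clF} \RedKarp \Sat{\clF}$, and hence $\CSat{\clF} \RedPSP \Sat{\clF}$. Composing the two reductions gives $\Sat{\LSuma{a}{\clI}{\clF}} \RedPSP \Sat{\clF}$, as required.

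For the second statement, assuming in addition that $\clI$ is closed under finite disjoint unions, I would chain three reductions: first $\Sat{\univ{(\LSuma{a}{\clI}{\clF})}} \RedPSP \CSat{(\LSuma{a}{\clI}{\clF})}$ from Proposition \ref{prop:satU_to_CSat}; then $\CSat{\LSuma{a}{\clI}{\clF}} \RedPSP \CSat{\clF}$ from Theorem \ref{thm:main_compl_resCSat}(\ref{thm:main_compl_resCSat_i2}); and finally $\CSat{\clF} \RedPSP \Sat{\clF}$ from Proposition \ref{prop:CsatToSat-precon} (using preconicality) together with the fact that $\RedKarp$ is stronger than $\RedPSP$. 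Transitivity of $\RedPSP$ then yields $\Sat{\univ{(\LSuma{a}{\clI}{\clF})}} \RedPSP \Sat{\clF}$.

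The only point requiring care is the bookkeeping of composing space-bounded oracle reductions, and I expect this to be routine rather than an obstacle. I would note, exactly as in the proof of Theorem \ref{thm:main_compl_res}, that $\RedPSP$ is transitive (a polynomial-space oracle machine whose oracle is itself decided in polynomial space relative to a third problem can be simulated within polynomial space) and that a $\RedKarp$-reduction is a special case of a $\RedPSP$-reduction (compute the image in polynomial time, then query the oracle once). Both facts have already been invoked in the proof of Theorem \ref{thm:main_compl_res}, so the corollary is genuinely immediate once preconicality is used to collapse $\Sat{\univ{\clF}}$ to $\Sat{\clF}$.
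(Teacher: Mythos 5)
Your proof is correct and follows essentially the same route as the paper: the paper justifies this corollary by observing that in the preconical case $\CSat{\clF}$, $\Sat{\clF}$, and $\Sat{\univ{\clF}}$ are $\RedPSP$-equivalent (via Propositions \ref{prop:CsatToSat-precon}, \ref{prop:CsatToSatUniv}, \ref{prop:satU_to_CSat}) and then substitutes $\Sat{\clF}$ for $\Sat{\univ{\clF}}$ in Theorem \ref{thm:main_compl_res}. Your version merely re-composes the same chain starting from Theorem \ref{thm:main_compl_resCSat}, replacing the final link $\CSat{\clF}\RedKarp\Sat{\univ{\clF}}$ by $\CSat{\clF}\RedKarp\Sat{\clF}$ from Proposition \ref{prop:CsatToSat-precon}, which is exactly where preconicality is used in the paper as well.
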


\subsection{PSPACE-hardness: some corollaries of Ladner's construction}

\improve{
\todef{$\LS{4.1}$, $\Grz.2$}
\todef{$\vL$-satisfiable !!! According \cite{BRV-ML}, we speak about sat in models (consistency); add a remark that
for a Kr-compl logic, $\vL$-sat=$\clF$-sat; or use $\vL$-consistency instead? --- good idea!
\\
$\vL$  is $\PSPACE$-hard.}

}

According to Ladner's theorem, every unimodal logic contained in $\LS{4}$ is $\PSPACE$-hard  \cite{Ladner77}.\improve{Engl}
In fact, Ladner's proof yields
$\PSPACE$-hardness for a wider class of modal logics (e.g., for logics contained in the   G\"odel-L\"ob logic $\GL$ or in the
 Grzegorczyk logic $\Grz$), see \cite{Spaan1993complexity}.
With minor modifications, Ladner's construction also works for logics contained in $\LS{4.1}$, $\Grz.2$ etc, for the polymodal case, and in particular -- for sums.

To illustrate this, let us briefly discuss the proof.
Consider a quantified Boolean formula
$\eta=Q_1 p_1 \ldots Q_{m} p_{m} \,\theta$, where $Q_1,\ldots, Q_{m}\ \in \{\EE,\AA\}$, and
$\theta$ is a propositional Boolean formula in variables $p_1,\ldots,p_{m}$.  Choose fresh variables $q_0,\ldots,q_m$.
Let $\trLad{\eta}$ be the
 following unimodal formula:\footnote{This variant of reduction is a slight modification of the one used in \cite{BRV-ML}.}
\def\Bi{
(q_i\imp \Di (q_{i+1}\wedge p_{i+1})\con \Di (q_{i+1}\wedge \neg p_{i+1}))
}
\def\Si{
(
(p_i\imp \Box p_ i)\con(\neg p_i\imp \Box \neg p_ i)
)
}
\begin{align*}
q_0\;\con\; & \bigwedge\nolimits_{i<m} \Boxm (q_i\imp \Di q_{i+1})  \;\con\;  \bigwedge\nolimits_{\substack{i \neq j\\ i,j\leq m}}\Boxm (q_i\imp \neg  q_{j})   \;\con \;
\Box^m(q_m\imp \theta) \;\con
\\
& \bigwedge\nolimits_{\{i<m \mid Q_{i+1}=\AA\}} \Box^i  \Bi  \;\con
\\
& \bigwedge\nolimits_{1\leq i\leq m-1}\Box^{i}(q_i\imp \Box^{\leq m} p_i \vee \Box^{\leq m} \neg p_i)
\end{align*}
where $\Box^m =\neg \Di^m\neg$; likewise for $\Box^{\leq m}$.

Let $\frT_\eta=(W_\eta,R_\eta)$ be the {\em quantifier tree} of $\eta$:
$W_\eta=\bigcup_{k\leq m} \{\sigma\in 2^k\mid \sigma(i)=0\text{ if } Q_{i+1}=\EE \} $;
$\sigma_1 R_\eta \sigma_2$ iff $\sigma_1\subset\sigma_2$ and $\dom{\sigma_2}=\dom{\sigma_1}+1$.
We remark that $\frT_\eta$ is antitransitive (that is, $x R y R z \Rightarrow \neg (xR z)$ holds in $\frT_\eta$), and so is irreflexive.
We have:
\begin{eqnarray}
\eta  \text{ is valid } &\quad \Rightarrow \quad& \trLad{\eta} \text{ is  satisfiable in } (W_\eta,R_\eta^*),\label{eq:Ladn1}\\
\trLad{\eta} \text{ is  satisfiable in a Kripke frame} &\quad \Rightarrow \quad & \eta \text{ is valid;} \label{eq:Ladn2}
\end{eqnarray}
see \cite[Section 6.7]{BRV-ML} for the details.

Consider a unimodal logic $\vL\subseteq \LS{4}$.
Then $(W_\eta,R_\eta^*)$ is an $\vL$-frame, hence every satisfiable in this frame formula is $\vL$-consistent.
So we have:
$$
\eta  \text{ is valid } \quad \Rightarrow \quad \trLad{\eta} \text{ is  satisfiable in } (W_\eta,R_\eta^*)
\quad \Rightarrow \quad \trLad{\eta} \text{ is  $\vL$-consistent} \quad \Rightarrow \quad  \eta \text{ is valid},
$$
that is, $\eta$ is valid iff $\trLad{\eta}$ is  $\vL$-consistent. Thus, the $\vL$-consistency problem is $\PSPACE$-hard;
synonymously, the  (provability problem for the) logic $\vL$ is $\PSPACE$-hard.

This proves  Ladner's theorem, in its classical formulation. And, in fact, it proves more.
Let $\GrzB$ be the logic of   the class of all finite transitive reflexive trees with branching $\leq 2$. We
have the following proper inclusions
$$\LS{4}\subsetneq \LS{4.1} \subsetneq  \Grz  \subsetneq  \GrzB,$$
see, e.g., \cite{CZ}, and \cite{gabbay_deJongh_trees_1974} for the latter inclusion.
Observe that  $(W_\eta,R_\eta^*)$ is a finite transitive reflexive tree. It immediately follows from the above reasonings
that every logic contained in $\Grz$ is  $\PSPACE$-hard. Moreover, the branching of $(W_\eta,R_\eta^*)$ is $\leq 2$; thus,
the result holds for logics contained in $\GrzB$:  \comm{History? Details on this logic?}
$$ \vL\subseteq \GrzB \quad \Rightarrow \quad  \vL \textrm{ is $\PSPACE$-hard}.$$

This formulation does not include an important  logic $\GL$: its frames are irreflexive. However, we can reformulate (\ref{eq:Ladn1}) in the following way:
if $R_\eta\subseteq R\subseteq R_\eta^*$, then
\begin{equation}\label{eq:Ladn3}
\eta  \text{ is valid }  \quad \Rightarrow \quad \trLad{\eta} \text{ is  satisfiable in } (W_\eta,R);
\end{equation}
the proof is straightforward and is an immediate analog of the proof of (\ref{eq:Ladn1}) given in \cite{BRV-ML}.

The condition $R_\eta\subseteq R\subseteq R_\eta^*$ can be reformulated as $R^*\,=\,R_\eta^*$, because $\frT_\eta$ is a tree.
\begin{definition}\label{def:thick-uni}
A class $\clF$ of unimodal frames is {\em thick} if for every  finite transitive reflexive tree $\frT=(T,\leq)$ 
whose branching is bounded by 2 there exist a relation $R$ on $T$ and a frame $\frF\in\clF$ such that
$R^*\;=\;\leq$ and $\frF$ is isomorphic to $(T,R)$.
\hide{
A class $\clF$ of unimodal frames is {\em thick} if for every  finite transitive reflexive tree $\frT=(T,\leq)$ with
branching $\leq 2$ there exists a relation $R$ on $T$ such $R^*=\leq$ and $(T,R)$ is isomorphic to a frames $\frF$ in $\clF$.}
\hide{A class $\clF$ of unimodal frames is {\em thick} if for every  finite transitive reflexive tree $\frT=(T,\leq)$ with
branching $\leq 2$
 there exists a frame $(W,R)\in\clF$ whose transitive reflexive closure $(W,R^*)$ is isomorphic to $\frT$.}
\end{definition}
From (\ref{eq:Ladn3}) we obtain that if $\clF$ is thick then
\begin{equation}\label{eq:thick0}
\eta  \text{ is valid }  \quad \Rightarrow \quad \trLad{\eta} \text{ is  satisfiable in } \clF.
\end{equation}
Thus, for a class $\clF$ of unimodal frames and a unimodal logic $\vL$,
from (\ref{eq:thick0}) and (\ref{eq:Ladn2}) we have:
\begin{equation}\label{eq:thick}
\clF \text{ is thick and }\vL\subseteq \Log\clF \quad \Rightarrow \quad  \vL \textrm{ is $\PSPACE$-hard}.
\end{equation}
In particular, this formulation allows to apply Ladner's construction for logics below $\GL$, since $\GL$ is the logic
of a thick class (consisting of  finite irreflexive transitive trees).

The logic $\LS{4.2}$
is not the logic of a thick class: it does not have trees of height $>1$ within its frames. However, trees are contained as subframes in $\LS{4.2}$-frames. And this is also enough for $\PSPACE$-hardness due to the following relativization argument proposed by E. Spaan \cite{Spaan1993complexity}.

\improve{Check the preliminaries for the subframe notion}
Recall that {\em a subframe} of a frame $\frF=(W,(R_a)_{a\in\AlM})$ is the restriction $\frF\restr V=(V,(R_a\cap (V\times V))_{a\in\AlM})$, where $V\neq\emp$.
For a class $\clF$ of frames, let $\SubFrs{\clF}$ be its closure under the subframe operation:
  $\SubFrs{\clF}=\{\frF\restr V\mid \frF\in \clF \text{ and } \emp\neq V\subseteq \dom{\frF}\}$.
The satisfiability in $\SubFrs{\clF}$ can be reduced
to the satisfiability in $\clF$ (see the proof of Theorem 2.2.1 in \cite{Spaan1993complexity}). Namely,
for an $\AlM$-formula $\vf$ and a variable $q$, let $\trSub{\vf}$  be the $\AlM$-formula inductively defined as follows:
$\trSub{\bot}=\bot$, $\trSub{p}=p$ for variables, $\trSub{\psi_1\imp \psi_2}=\trSub{\psi_1}\imp \trSub{\psi_2}$,
and $\trSub{\Di_a \psi}= \Di_a (\trSub{\psi}\con q)$ for $a\in\AlM$. We put
$\trRel{\vf}= q\con \trSub{\vf},$  where $q$ is the first variable not occurring in $\vf$.
If  $\mM=(\frF,\theta)$ is
 a model and  $V=\theta(q)$, then for every $v\in V$ we have:
$\mM\restr V,v\mo \vf \;\tiff \; \mM,v\mo \trRel{\vf}$; the proof is by induction on $\vf$. This immediately yields
\hide{
\begin{proposition}
Suppose that a variable $q$ does not occur in a $\AlM$-formula $\vf$, $\frF$ is an $\AlM$-frame. Then
$$
q\con \trSub{\vf}\in \Sat{\frF} \quad \tiff \quad  \vf\in \CSat{\frF\restr V} \text{ for some non-empty }V\subseteq \dom{\frF}
$$
\end{proposition}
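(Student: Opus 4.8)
The statement asserts that $\trRel{\cdot}$ faithfully relativizes truth to the subset $V = \theta(q)$, and, as the text indicates, the natural approach is induction on the structure of $\vf$. The plan is as follows. Since $\trRel{\vf} = q \con \trSub{\vf}$ inserts the guard $q$ only at the top level, the translation $\trRel{\cdot}$ is not compositional, so I would first prove the compositional auxiliary claim
$$\mM\restr V, v \mo \vf \tiff \mM, v \mo \trSub{\vf} \quad \text{for every } v \in V,$$
and then recover the stated equivalence. This recovery is immediate: as $V = \theta(q)$, every $v \in V$ satisfies $\mM, v \mo q$, so $\mM, v \mo \trRel{\vf}$ holds iff $\mM, v \mo \trSub{\vf}$, that is, iff $\mM\restr V, v \mo \vf$.

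For the auxiliary claim, the base and Boolean cases are routine. For $\vf = \bot$ both sides are false; for a variable $p$, the valuation of $\mM\restr V$ is $\theta(p) \cap V$, so for $v \in V$ the condition $v \in \theta(p) \cap V$ collapses to $v \in \theta(p)$, which is exactly $\mM, v \mo \trSub{p}$. The implication case follows by applying the induction hypothesis to both immediate subformulas at the same point $v$.

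The only step requiring care is the modal case $\vf = \Di_a \psi$. Unfolding truth in the submodel, $\mM\restr V, v \mo \Di_a \psi$ holds iff there is a $u$ with $v\, R_a\, u$, $u \in V$, and $\mM\restr V, u \mo \psi$, where $u \in V$ is forced because the domain of $\mM\restr V$ is $V$ and its $a$-relation is $R_a \cap (V \times V)$. As $u \in V$, the induction hypothesis applies at $u$ and rewrites the last conjunct as $\mM, u \mo \trSub{\psi}$; together with $u \in V$, i.e.\ $\mM, u \mo q$, this says precisely $\mM, u \mo \trSub{\psi} \con q$. Hence the whole statement becomes $\mM, v \mo \Di_a(\trSub{\psi} \con q) = \mM, v \mo \trSub{\Di_a\psi}$, as required. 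The main (and essentially only) obstacle is this bookkeeping: one must observe that restricting the accessibility relation to $V \times V$ in $\mM\restr V$ is mirrored exactly by the conjunct $q$ placed inside the diamond in $\trSub{\cdot}$, and that the induction hypothesis is invoked only at points of $V$ --- which is guaranteed, since those are the only points accessible within the submodel. This closes the induction and yields the claim.
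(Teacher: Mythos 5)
Your proof is correct and follows essentially the same route as the paper: the paper likewise reduces the proposition to the pointwise claim that for $V=\theta(q)$ and $v\in V$, $\mM\restr V,v\mo\vf$ iff $\mM,v\mo q\con\trSub{\vf}$, proved by induction on $\vf$, with the modal case hinging on the guard $q$ inside the diamond mirroring the restriction of the relations to $V\times V$. Your observation that one should run the induction on the compositional map $\trSub{\cdot}$ rather than on $q\con\trSub{\cdot}$ is a small but sound refinement of the paper's phrasing, not a different argument.
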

}
\begin{proposition}[\cite{Spaan1993complexity}]\label{prop:sat-thick-sub}
$\vf$ is satisfiable in $\SubFrs{\clF}$   \;iff  \;
 $\trRel{\vf}$ is satisfiable in  $\clF$.
\end{proposition}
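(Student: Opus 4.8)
The plan is to read off both directions of the equivalence directly from the fact already established just above the statement: for a model $\mM=(\frF,\theta)$ with $V=\theta(q)$ and any point $v\in V$, one has $\mM\restr V, v\mo\vf$ iff $\mM, v\mo\trRel{\vf}$. Everything else is bookkeeping about the fresh variable $q$ and the passage between a model on a subframe and a model on the ambient frame; no genuinely new argument is needed.

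For the ``only if'' direction I would start from a witness that $\vf$ is satisfiable in $\SubFrs{\clF}$: a frame $\frF\in\clF$, a non-empty $V\subseteq\dom{\frF}$, a valuation $\vartheta$ on $\frF\restr V$, and a point $v\in V$ with $(\frF\restr V,\vartheta), v\mo\vf$. I would then lift $\vartheta$ to a valuation $\theta$ on all of $\frF$ by setting $\theta(p)=\vartheta(p)$ for the variables $p$ occurring in $\vf$ (the values of $\theta$ on the remaining variables being irrelevant) and, crucially, $\theta(q)=V$; this is legitimate precisely because $q$ does not occur in $\vf$. Writing $\mM=(\frF,\theta)$, the restricted model $\mM\restr V$ agrees with $(\frF\restr V,\vartheta)$ on every variable of $\vf$, so $\mM\restr V, v\mo\vf$. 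Since $V=\theta(q)$ and $v\in V$, the quoted equivalence gives $\mM, v\mo\trRel{\vf}$, which witnesses satisfiability of $\trRel{\vf}$ in $\frF\in\clF$.

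For the ``if'' direction I would start from a model $\mM=(\frF,\theta)$ on a frame $\frF\in\clF$ and a point $w$ with $\mM, w\mo\trRel{\vf}$. Because $\trRel{\vf}=q\con\trSub{\vf}$ carries $q$ as a conjunct, $\mM, w\mo q$, so $w\in V:=\theta(q)$; in particular $V\neq\emp$, whence $\frF\restr V\in\SubFrs{\clF}$. Applying the same equivalence at the point $w\in V$ now yields $\mM\restr V, w\mo\vf$, so $\vf$ is satisfiable in $\frF\restr V\in\SubFrs{\clF}$, as required.

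I do not anticipate a real obstacle: the proposition is essentially a repackaging of the induction on $\vf$ already performed for the translation $\trSub{\cdot}$. The only point demanding care is the construction of $\theta$ in the forward direction, where one must arrange simultaneously that $\theta(q)$ equals exactly the domain $V$ of the chosen subframe and that the restriction to $V$ reproduces the given subframe valuation on the variables of $\vf$; this is exactly the place where the freshness of $q$ is used.
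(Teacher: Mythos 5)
Your argument is correct and is exactly the route the paper takes: the paper states the pointwise equivalence $\mM\restr V,v\mo\vf \tiff \mM,v\mo\trRel{\vf}$ for $V=\theta(q)$ (proved by induction on $\vf$) and then declares that the proposition ``immediately'' follows, which is precisely the bookkeeping you carry out. Your explicit handling of the lifted valuation with $\theta(q)=V$ in the forward direction, and of $V=\theta(q)\neq\emp$ via the conjunct $q$ in the backward direction, fills in the details the paper leaves implicit.
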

Thus, $\Sat{\SubFrs\clF}$ is polynomial time \improve{\todo{not precise}} reducible to $\Sat\clF$.

It immediately follows from (\ref{eq:thick})  and  Proposition \ref{prop:sat-thick-sub} that
if $\SubFrs{\clF}$ is thick, then the logic of $\clF$ is $\PSPACE$-hard.
Moreover, this holds for every $\vL\subseteq \Log\clF$:   a quantified Boolean formula
$\eta$ is valid iff $\trRel{\trLad{\eta}}$ is  $\vL$-consistent.
Indeed, if $\eta$ is valid,
then  $\trLad{\eta}$ is satisfiable in $\SubFrs{\clF}$ by  (\ref{eq:thick0}), so
$\trRel{\trLad{\eta}}$ is satisfiable in $\clF$ by Proposition \ref{prop:sat-thick-sub}, hence $\trRel{\trLad{\eta}}$  is $\vL$-consistent.
The converse implication follows from (\ref{eq:Ladn2}). Thus, we have
\begin{theorem}[A corollary of \cite{Ladner77} and \cite{Spaan1993complexity}]
Let $\clF$ be a class of unimodal frames. If $\SubFrs{\clF}$ is thick, then
every unimodal $\vL\subseteq \Log\clF$ is $\PSPACE$-hard.
\end{theorem}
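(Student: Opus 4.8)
The plan is to package the translations $\trLad{\cdot}$ and $\trRel{\cdot}$ assembled above into a single polynomial-time reduction from the validity problem for quantified Boolean formulas to the $\vL$-consistency problem. Explicitly, I would send each quantified Boolean formula $\eta$ to $\trRel{\trLad{\eta}}$ and prove the equivalence
$$\eta \text{ is valid} \quad\tiff\quad \trRel{\trLad{\eta}}\text{ is }\vL\text{-consistent}.$$
Both $\trLad{\cdot}$ and $\trRel{\cdot}$ are computable in polynomial time and output formulas of size polynomial in $|\eta|$, so their composition is a genuine polynomial-time many-one reduction; since validity of quantified Boolean formulas is $\PSPACE$-complete and $\PSPACE$ is closed under complement, this equivalence makes the $\vL$-consistency problem, equivalently the provability problem for $\vL$, $\PSPACE$-hard.

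For the forward implication I would simply chain the three facts already prepared. Assume $\eta$ is valid. By hypothesis $\SubFrs{\clF}$ is thick, so (\ref{eq:thick0}), read for the class $\SubFrs{\clF}$, gives that $\trLad{\eta}$ is satisfiable in $\SubFrs{\clF}$. Proposition \ref{prop:sat-thick-sub} then converts this into satisfiability of $\trRel{\trLad{\eta}}$ in $\clF$. Finally, a formula satisfiable in some model over a frame of $\clF$ cannot have a valid negation on $\clF$, so $\neg\trRel{\trLad{\eta}}\notin\Log\clF$ and therefore $\neg\trRel{\trLad{\eta}}\notin\vL$ as $\vL\subseteq\Log\clF$; that is, $\trRel{\trLad{\eta}}$ is $\vL$-consistent.

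For the converse I would route through the canonical model rather than through Proposition \ref{prop:sat-thick-sub}. Because $\vL$ is a normal modal logic, $\vL$-consistency of $\trRel{\trLad{\eta}}$ means that it holds at some point $v$ of a genuine Kripke model $\mM=(\frF,\theta)$, for instance the canonical model of $\vL$; no Kripke completeness with respect to $\clF$ is invoked here, only that the canonical model is an actual Kripke model. Since $q$ is the leading conjunct of $\trRel{\trLad{\eta}}$, the point $v$ lies in $V=\theta(q)$, and the relativization equivalence $\mM\restr V,v\mo\psi\tiff\mM,v\mo\trRel{\psi}$ (for $v\in V$) yields $\mM\restr V,v\mo\trLad{\eta}$. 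Thus $\trLad{\eta}$ is satisfiable in the Kripke frame $\frF\restr V$, and (\ref{eq:Ladn2}) gives that $\eta$ is valid.

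The two implications establish the displayed equivalence and hence the theorem. The step I expect to require the most care is the converse: the natural instinct is to reuse Proposition \ref{prop:sat-thick-sub}, but that proposition only concerns models over $\clF$, whereas here the satisfying model is merely some Kripke model supplied by canonicity; the argument must instead relativize inside that abstract model before appealing to (\ref{eq:Ladn2}). Everything else is a bookkeeping composition of results already in hand, and the polynomiality of $\trLad{\cdot}$ and $\trRel{\cdot}$ is routine to read off from their inductive definitions.
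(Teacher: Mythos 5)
Your proposal is correct and follows essentially the same route as the paper: the forward direction chains (\ref{eq:thick0}) for $\SubFrs{\clF}$ with Proposition \ref{prop:sat-thick-sub} and the inclusion $\vL\subseteq\Log\clF$, and the converse is exactly the paper's appeal to (\ref{eq:Ladn2}), which you merely spell out (correctly) via the canonical model and the relativization equivalence $\mM\restr V,v\mo\psi\tiff\mM,v\mo\trRel{\psi}$. Your remark that the converse cannot go through Proposition \ref{prop:sat-thick-sub} itself, but must relativize inside an arbitrary Kripke model before invoking (\ref{eq:Ladn2}), is precisely the detail the paper leaves implicit.
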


For an $\AlM$-frame $\frF=(W,(R_a)_{a\in\AlM})$ and $a\in \AlA$, let $\reduct{\frF}{\{a\}}$ be its reduct $(W,R_a)$.
A class $\clF$ of $\AlM$-frames is said to be {\em thick} if for some $a\in \AlM$ the class $\{\reduct{\frF}{\{a\}}\mid \frF\in\clF\}$ is thick.
\begin{proposition}
Let $\clF$ and $\clI$ be classes of $\AlM$-frames. If $\clF$ is non-empty and
$\SubFrs{\clI}$ is thick, then $\SubFrs{\LSum{\clI}{\clF}}$ is thick.
\end{proposition}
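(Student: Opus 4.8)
The plan is to show that the very modality $a\in\AlM$ witnessing the thickness of $\SubFrs{\clI}$ also witnesses the thickness of $\SubFrs{\LSum{\clI}{\clF}}$. So I would fix $a$ such that the unimodal class $\{\reduct{\frJ}{\{a\}}\mid \frJ\in\SubFrs{\clI}\}$ is thick in the sense of Definition~\ref{def:thick-uni}, and, using $\clF\neq\emp$, fix a frame $\frG\in\clF$ together with a point $u\in\dom{\frG}$. The task is then to produce, for an arbitrary finite transitive reflexive tree $(T,\leq)$ of branching $\leq 2$, a relation $R'$ on $T$ and a frame in $\SubFrs{\LSum{\clI}{\clF}}$ whose $a$-reduct is isomorphic to $(T,R')$ with $(R')^*={\leq}$.

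First I would unpack thickness of $\SubFrs{\clI}$ at $(T,\leq)$: it gives a relation $R_I$ on $T$ and a frame $\frJ\in\SubFrs{\clI}$ with $R_I^*={\leq}$ and $\reduct{\frJ}{\{a\}}\isom (T,R_I)$. Writing $\frJ=\frI\restr U$ for some $\frI\in\clI$ and $\emp\neq U\subseteq\dom{\frI}$, and transporting everything along the isomorphism, I may assume $U=T$ and that the $a$-relation $S_a$ of $\frI$ satisfies $S_a\cap(T\times T)=R_I$.

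Next I would build the sum over the whole index frame with a constant family of summands, $\frH=\LSum{i\in\frI}{\frG}$, which lies in $\LSum{\clI}{\clF}$, and then select exactly one point per relevant component: $V=\{(i,u)\mid i\in T\}$, so that $\frH\restr V\in\SubFrs{\LSum{\clI}{\clF}}$ (and $V\neq\emp$ since $T\neq\emp$). Identifying $V$ with $T$ via $(i,u)\mapsto i$ and reading off $R^\Sigma_a$ restricted to $V$, I obtain a relation $R'$ on $T$ which off the diagonal coincides with $S_a$ on $T\times T$, hence with $R_I$, while on the diagonal it consists only of the self-loops forced by whether $u R_a^\frG u$ holds. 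The decisive observation is that the reflexive transitive closure of a relation depends only on its off-diagonal part (adding or deleting self-loops changes neither reachability nor the identity part that $R^*$ always contains), so $(R')^*=R_I^*={\leq}$. This gives $\reduct{(\frH\restr V)}{\{a\}}\isom(T,R')$ with $(R')^*={\leq}$, and since $(T,\leq)$ was arbitrary, $\SubFrs{\LSum{\clI}{\clF}}$ is thick, witnessed by $a$.

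The only real subtlety — the part to watch — is the bookkeeping around self-loops inside a single summand: picking just one point of $\frG$ per component kills all genuine within-component edges but may leave a reflexive loop $(i,u)\,R^\Sigma_a\,(i,u)$ precisely when $u R_a^\frG u$. I would dispose of this by the diagonal-insensitivity of $R^*$ noted above, which is exactly why Definition~\ref{def:thick-uni} constrains $R^*$ rather than $R$ itself. Everything else — that $\frH\restr V$ is a subframe of a sum from $\LSum{\clI}{\clF}$, and that the cross-component $a$-edges in the sum are governed precisely by $S_a$ — is immediate from the definitions of the sum and of the subframe operation.
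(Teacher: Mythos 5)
Your proof is correct, and it takes the only natural route: the paper's own ``proof'' is just the one-line remark that the claim follows from Definitions~\ref{def:sum-poly-extra} and~\ref{def:thick-uni}, and your construction (a constant family of summands over the witnessing index frame, then selecting one point per component and invoking the fact that $R^*$ is insensitive to self-loops) is exactly the argument that remark leaves implicit. The self-loop bookkeeping you flag is indeed the only non-trivial point, and you handle it correctly.
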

\begin{proof}
Follows from Definitions \ref{def:sum-poly-extra} and \ref{def:thick-uni}.
\improve{DC; give details}
\end{proof}

\begin{corollary}\label{cor:Hard}
Let $\clF$ and $\clI$ be classes of $\AlM$-frames. If $\clF$ is non-empty and
$\SubFrs{\clI}$  is thick, then $\Sat{\LSum{\clI}{\clF}}$ is $\PSPACE$-hard.
\end{corollary}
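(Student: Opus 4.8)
The plan is to obtain the statement as a direct composition of the two results just established: the Proposition that $\SubFrs{\LSum{\clI}{\clF}}$ is thick, and the Theorem (the corollary of \cite{Ladner77} and \cite{Spaan1993complexity}) that thickness of $\SubFrs{\clG}$ forces $\PSPACE$-hardness of the unimodal logic $\Log\clG$. Write $\clC=\LSum{\clI}{\clF}$. Since $\clF$ is non-empty and $\SubFrs{\clI}$ is thick, the preceding Proposition yields that $\SubFrs{\clC}$ is thick, and by the definition of thickness for $\AlM$-frames this furnishes a modality $a\in\AlM$ for which the unimodal class $\{\reduct{\frH}{\{a\}}\mid \frH\in\SubFrs{\clC}\}$ is thick.

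First I would record the trivial identity that reducts commute with the subframe operation: for every $\AlM$-frame $\frG$ and every non-empty $V\subseteq\dom{\frG}$ one has $\reduct{(\frG\restr V)}{\{a\}}=(\reduct{\frG}{\{a\}})\restr V$, since both sides equal $(V,R_a\cap(V\times V))$. Setting $\clC_a=\{\reduct{\frG}{\{a\}}\mid \frG\in\clC\}$, this identity gives $\{\reduct{\frH}{\{a\}}\mid \frH\in\SubFrs{\clC}\}=\SubFrs{\clC_a}$, so $\SubFrs{\clC_a}$ is thick. The Theorem then applies to the unimodal class $\clC_a$ and shows that $\Log\clC_a$ is $\PSPACE$-hard; as $\clC_a$ is a class of frames, $\Log\clC_a$ is Kripke complete and its consistency problem coincides with $\Sat{\clC_a}$, which is therefore $\PSPACE$-hard.

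It then remains to carry hardness from the single-modality class $\clC_a$ back to $\clC$. For a unimodal formula $\psi$, let $\psi^{(a)}$ be the $\AlM$-formula obtained by relabelling every $\Di$ as $\Di_a$. A routine induction on $\psi$ shows that $\frG,w\mo\psi^{(a)}$ iff $\reduct{\frG}{\{a\}},w\mo\psi$ for every $\AlM$-frame $\frG$, every valuation, and every point $w$, because $\psi^{(a)}$ refers only to the relation $R_a$. Hence $\psi\in\Sat{\clC_a}$ iff $\psi^{(a)}\in\Sat{\clC}$, and since $\psi\mapsto\psi^{(a)}$ is computable in linear time we obtain $\Sat{\clC_a}\RedKarp\Sat{\clC}$. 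Because $\PSPACE$-hardness is preserved under $\RedKarp$, it follows that $\Sat{\LSum{\clI}{\clF}}$ is $\PSPACE$-hard.

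There is no genuinely difficult step here, since the argument is an assembly of facts already in hand; the only points needing care are the bookkeeping identity relating $\SubFrs$ to the $a$-reduct and the standard observation that, for the Kripke-complete logic $\Log\clC_a$, $\PSPACE$-hardness of provability is the same as $\PSPACE$-hardness of $\Sat{\clC_a}$.
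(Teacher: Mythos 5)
Your proposal is correct and follows exactly the route the paper intends: the paper gives no explicit proof of Corollary~\ref{cor:Hard}, presenting it as an immediate consequence of the preceding Proposition (thickness of $\SubFrs{\LSum{\clI}{\clF}}$) and the Ladner--Spaan Theorem. You merely make explicit the routine bookkeeping the paper suppresses --- that reducts commute with $\SubFrs$, and that hardness transfers from the unimodal reduct class back to $\LSum{\clI}{\clF}$ via the relabelling $\Di\mapsto\Di_a$ --- all of which is sound.
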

\improve{ (in particular, if $\clI$ contains all finite trees)}

\subsection{Examples}\label{sub:examples}
For many logics, Theorem \ref{thm:main_compl_res}
gives a uniform proof of decidability in $\PSPACE$ ($\PSPACE$-completeness in view of Corollary \ref{cor:Hard}).
Let us illustrate it with certain examples.

\begin{example}\label{ex:S4andAround}
It is well-known that the logic $\LS{4}$ as well as its expansion with the universal modality are $\PSPACE$-complete  \cite{Ladner77},\cite{SpaanUniv}.

Let us prove it via sums.   Recall that clusters are frames of the form $(C,C\times C)$.
Every preorder $\frF$ is isomorphic to the sum $\LSum{C\in \clR{\frF}}(C,C\times C)$ of its clusters over its skeleton $\clR{\frF}$. The logic $\LS{4}$ has the finite model property,
so it is enough to consider only finite indices, and
hence $\LS{4}$ is the logic of the class
\begin{equation}\label{eq:s4viaSums}
\LSum{\text{finite partial orders}}{\mathrm{clusters}}
\end{equation}
Thus, we have:
$$\Sat{(\mathrm{preorders})}  \; \RedPSP  \; \Sat{(\mathrm{clusters})}$$
or dually
$$\LS{4}  \; \RedPSP  \; \LS{5}.$$

The satisfiability problem in clusters is in $\NP$ (one can easily check that
a formula $\vf$ is satisfiable in a cluster iff it is satisfiable in a cluster of size $\lvf$;
see, e.g., \cite[Section 18.3]{CZ}). 
Now $\PSPACE$-completeness of $\LS{4}$ follows from (\ref{eq:s4viaSums}): we have $\PSPACE$-upper bound by Theorem \ref{thm:main_compl_res}(\ref{thmItem:main_compl_res-1}); $\PSPACE$-hardness is given by Corollary \ref{cor:Hard}.
Moreover, Theorem \ref{thm:main_compl_res}(\ref{thmItem:main_compl_res-2}) gives $\PSPACE$-upper bound of the logic  \mbox{$\univL{\LS{4}}=\Log\{(W,W\times W,R)\mid (W,R) \text{ is a preorder}\}$}.

Changing the class of summands, we  obtain $\PSPACE$-completeness for other logics.
Let $\frS_0$ and $\frS_1$ be an irreflexive and a reflexive singleton, respectively.

If we add $\frS_0$ to the class of summands-clusters, then we obtain $\PSPACE$-completeness of $\LK{4}$ and $\univL{\LK{4}}$.

Letting the class of summands be $\{\frS_0\}$, we obtain $\PSPACE$-completeness for the logics $\GL$. If
the class of summands consists of a reflexive singleton $\frS_1$, the above reasonings give $\PSPACE$-completeness of the
Grzegorczyk logic, and if the class of summands is $\{\frS_0, \frS_1\}$  --- for the weak Grzegorczyk logic (recall that
the latter logic is characterized by frames whose reflexive closures are non-strict Noetherian orders \cite{LitakGrz2007}; this
class can be represented as $\LSum{\NPO}\{\frS_0, \frS_1\}$).
\end{example}
The results discussed in Example \ref{ex:S4andAround} are well-known \cite{Ladner77},\cite{Spaan1993complexity},\cite{SpaanUniv}. \improve{Ref on GL?}
To the best of our knowledge, the following result has never been published before.
\begin{example}\label{ex:wk4compl}
The {\em weak transitivity logic} $\wK4$ is the logic of frames satisfying the condition $x R z R  y \Rightarrow xR y\vee x=y.$
The {\em difference logic} $\DL$ is the logic of the frames such that $xR y$ whenever $x\neq y$; let $\clF_{\neq}$ be the class of such frames.
The logic $\wK4$  has the finite model property \cite{esakia2001weak}, \improve{DC the ref}
hence it is the logic of the class
\begin{equation}\label{eq:reprwK4}
\LSum{\text{finite partial orders}}\clF_{\neq},
\end{equation}
see Example \ref{ex:wk4semant}. It is not difficult to check that $\Sat{\clF_{\neq}}$ is in $\NP$  (like in the case of clusters, the size of a countermodel is linear in the length of
a formula), and by Theorem \ref{thm:main_compl_res} we obtain that \improve{ref?}
$$
\wK4\RedPSP \DL\in \PSPACE,
$$
so $\wK4$ is in $\PSPACE$ (and $\PSPACE$-complete: $\PSPACE$-hardness follows from \cite{Ladner77},
since $\wK4\subseteq \LS{4}$, or from the representation (\ref{eq:reprwK4}) and Corollary \ref{cor:Hard}). Moreover, since the class $\clF_{\neq}$ is preconical, $\univL{\wK4}$ is in $\PSPACE$.
\end{example}
\begin{example}
The logic $\wK4.2$ is the logic of weakly transitive frames (considered in the above example) satisfying the Church-Rosser property
$xRy_1\,\&\,xRy_2\;\Rightarrow \;\EE z\, (y_1Rz\,\&\,y_2Rz)$.
Observe that every frame validating $\wK4.2$ is either in $\clF_{\neq}$, or is isomorphic to the sum of a frame validating $\wK4$ and a frame in $\clF_{\neq}$.
$$
\Frames{(\wK4.2)} \quad = \quad \clF_{\neq} \cup (\Frames{(\wK4)}\;\plusV{0}\;\clF_{\neq}).
$$
Now that $\wK4.2$ is decidable in $\PSPACE$ follows from the previous example and the following theorem.
\end{example}


\begin{theorem}
Let $\clF$ and $\clG$ be classes of $\AlM$-frames, and $a\in \AlM$.
If $\Sat{\univ{\clG}}\RedPSP\Sat{\univ{\clF}}$ (in particular, if $\Sat{\univ{\clG}}$ is in $\PSPACE$), then
$\Sat{\univ{(\clF\plusa\clG)}}\RedPSP\Sat{\univ{\clF}}$, and consequently
$\Sat{(\clF\plusa\clG)}\RedPSP\Sat{\univ{\clF}}$.
\improve{First sat - weak; DC!!!!}
\end{theorem}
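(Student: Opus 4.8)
The plan is to route everything through the conditional satisfiability problem and then invoke the decomposition Lemma~\ref{psp:lem:satsumm-decomp-top-poly}. First I would apply Proposition~\ref{prop:satU_to_CSat} to the class $\clF\plusa\clG$ itself, which gives $\Sat{\univ{(\clF\plusa\clG)}}\RedPSP\CSat{(\clF\plusa\clG)}$. Thus it suffices to establish $\CSat{(\clF\plusa\clG)}\RedPSP\Sat{\univ{\clF}}$, after which transitivity of $\RedPSP$ yields the first assertion.

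For this key reduction, Lemma~\ref{psp:lem:satsumm-decomp-top-poly} describes exactly how membership in $\CSat{(\clF\plusa\clG)}$ splits: a tie $(\vf,\vect,\vectU)$ is satisfiable in $\clF\plusa\clG$ iff there are $\vect_0,\vect_1\in 2^{\lvf}$ with $\vect=\vect_0+\vect_1$ such that $(\vf,\vect_0,\vectU\plusa\vect_1)\in\CSat{\clF}$ and $(\vf,\vect_1,\vectU)\in\CSat{\clG}$. I would therefore describe a polynomial-space oracle machine that, on input $(\vf,\vect,\vectU)$, enumerates all pairs $\vect_0,\vect_1\in 2^{\lvf}$ with $\vect=\vect_0+\vect_1$ (each pair stored in $\BigO(\lvf)$ space, reusing the space across iterations) and accepts as soon as it finds one for which both membership tests succeed. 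Correctness is immediate from the lemma.

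It remains to realise the two membership tests as polynomial-space computations relative to the single oracle $\Sat{\univ{\clF}}$. For the first, Proposition~\ref{prop:CsatToSatUniv} gives $\CSat{\clF}\RedKarp\Sat{\univ{\clF}}$. For the second, Proposition~\ref{prop:CsatToSatUniv} gives $\CSat{\clG}\RedKarp\Sat{\univ{\clG}}$, and the hypothesis $\Sat{\univ{\clG}}\RedPSP\Sat{\univ{\clF}}$ then yields $\CSat{\clG}\RedPSP\Sat{\univ{\clF}}$ by composition (a polynomial-time many-one reduction followed by a polynomial-space Turing reduction is again a polynomial-space Turing reduction). Inlining these two subroutines into the enumeration above, and noting that the whole search runs with recursion depth one and reuses $\BigO(\lvf)$ bookkeeping space, produces a polynomial-space oracle machine with oracle $\Sat{\univ{\clF}}$ deciding $\CSat{(\clF\plusa\clG)}$. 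Composing with the reduction from the first paragraph gives $\Sat{\univ{(\clF\plusa\clG)}}\RedPSP\Sat{\univ{\clF}}$.

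For the final clause I would use the elementary reduction $\Sat{\clH}\RedKarp\Sat{\univ{\clH}}$, obtained by reindexing every modality $\Di_b$ of an input $\AlM$-formula to $\Di_{b+1}$: since the extra $0$-th (universal) relation of $\univ{\frF}$ is simply unused, a formula is satisfiable over $\clH$ iff its reindexing is satisfiable over $\univ{\clH}$. Applying this with $\clH=\clF\plusa\clG$ and composing with the result just proved gives $\Sat{(\clF\plusa\clG)}\RedPSP\Sat{\univ{\clF}}$. I expect no conceptual difficulty here: the whole argument is an assembly of Lemma~\ref{psp:lem:satsumm-decomp-top-poly} and Propositions~\ref{prop:CsatToSatUniv}, \ref{prop:satU_to_CSat}; the only point requiring care is verifying that the composition of the various $\RedKarp$ and $\RedPSP$ reductions, together with the exponential-size but polynomial-space-enumerable search over decompositions, stays within the polynomial-space budget.
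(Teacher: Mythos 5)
Your proposal is correct and follows essentially the same route as the paper: both arguments pass through conditional satisfiability via Propositions~\ref{prop:CsatToSatUniv} and \ref{prop:satU_to_CSat} and use Lemma~\ref{psp:lem:satsumm-decomp-top-poly} to decompose ties over $\clF\plusa\clG$; the only (inessential) difference is that the paper first establishes $\CSat{(\clF\plusa\clG)}\RedPSP\CSat{\clF}$ as an intermediate step, whereas you target the oracle $\Sat{\univ{\clF}}$ directly. Your explicit treatment of the final clause via index shifting is a detail the paper leaves implicit, and it is fine.
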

\begin{proof}
By Propositions \ref{prop:CsatToSatUniv} and \ref{prop:satU_to_CSat},  from
$\Sat{\univ{\clG}}\RedPSP\Sat{\univ{\clF}}$ we obtain $\CSat{\clG}\RedPSP\CSat{\clF}$.
Using  Lemma \ref{psp:lem:satsumm-decomp-top-poly},
from the latter reduction we obtain
$\CSat{(\clF\plusa\clG)}\RedPSP\CSat{\clF}$.
Using Propositions \ref{prop:CsatToSatUniv} and \ref{prop:satU_to_CSat} again, we obtain
that $\Sat{\univ{(\clF\plusa\clG)}}\RedPSP\Sat{\univ{\clF}}$.
\end{proof}
\begin{remark}
This theorem can be strengthened in two aspects. First, it can be formulated for reductions stronger than $\RedPSP$.
Another observation is that instead of the class $\clF\plusa\clG$, i.e., a class of sums over $\frI=(2,<)$,
one can consider sums over an arbitrary finite indexing frame $\frI$; in this case, a reduction (with several oracles)
from sums to summands would follow from Lemma \ref{psp:lem:extrernal}.
\end{remark}

We conclude this section with the following example.
In topological semantics ($\Di$ is for the derivation),
$\wK4$ is the logic of all topological spaces \cite{esakia2001weak}. \improve{Shehtman}
In \cite{GuramEsakiaDavid2009} it was shown that the logic $\T0$
of all $T_0$-spaces is equal to the logic
of all finite weakly transitive frames where clusters contain at most one irreflexive point.
The satisfiability for such clusters is in $\NP$, and we obtain
\begin{corollary}
$\T0$ is $\PSPACE$-complete.
\end{corollary}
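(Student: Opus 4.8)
The plan is to exhibit $\T0$ as the logic of a class of sums over finite partial orders whose summands form a preconical class with satisfiability in $\NP$, and then to read off $\PSPACE$-membership from Corollary~\ref{thm:main_compl_resConical} and $\PSPACE$-hardness from Corollary~\ref{cor:Hard}, exactly as in Examples~\ref{ex:S4andAround} and~\ref{ex:wk4compl}. Let $\clC$ denote the class of \emph{clusters with at most one irreflexive point}, i.e. the frames $(W,R)$ with $x\neq y\Rightarrow xRy$ in which at most one point is irreflexive. By the characterization of \cite{GuramEsakiaDavid2009} quoted above, $\T0$ is the logic of the finite weakly transitive frames whose clusters all lie in $\clC$. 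By the decomposition of weakly transitive frames in Example~\ref{ex:wk4semant}, each such frame is isomorphic to the sum $\LSuma{0}{\frI}{\frF_i}$ of its clusters $\frF_i\in\clC$ over the skeleton $\frI$ of $(W,R^*)$, viewed as a finite strict partial order (by Remark~\ref{rem:independOfRefl} the sum is insensitive to reflexivity of the index relation); conversely, every such sum is again a finite weakly transitive frame with clusters in $\clC$. Hence $\T0=\Log\LSuma{0}{\clI}{\clC}$, where $\clI$ is the class of all finite strict partial orders, which is a class of Noetherian orders containing all finite trees.

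Next I would check that $\clC$ is preconical. Every $\frF=(W,R)\in\clC$ is $1$-transitive: for $|W|\geq2$ we have $R^{\leq1}=W\times W=R^*$ (a diagonal pair is supplied by $R^0$ and an off-diagonal pair by $R$, since $x\neq y\Rightarrow xRy$), and for the two one-point clusters $R^{\leq1}=R^*$ is trivial. Moreover, since $R^*(w)=W$, every cone $\cone{\frF}{w}$ equals $\frF$, so $\clC$ is closed under taking cones, and $R_\frF^*$ is downward directed. Thus $\clC$ is preconical.

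For the upper bound I would use that $\Sat{\clC}\in\NP$ (a formula is satisfiable in such a cluster iff it is satisfiable in one of size linear in $\lvf$, so a model can be guessed and verified in polynomial time). Since $\clC$ is preconical, Corollary~\ref{thm:main_compl_resConical}(1) yields $\Sat{\LSuma{0}{\clI}{\clC}}\RedPSP\Sat{\clC}$, and as $\Sat{\clC}\in\NP\subseteq\PSPACE$ we conclude $\Sat{\T0}\in\PSPACE$. (Equivalently, preconicality makes $\Sat{\clC}$, $\CSat{\clC}$ and $\Sat{\univ{\clC}}$ mutually $\RedPSP$-equivalent by Propositions~\ref{prop:CsatToSat-precon}, \ref{prop:CsatToSatUniv} and~\ref{prop:satU_to_CSat}, so one may instead invoke Theorem~\ref{thm:main_compl_res}(1) directly.)

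Finally, for $\PSPACE$-hardness I would apply Corollary~\ref{cor:Hard}: $\clC$ is non-empty, and the index class $\clI$ contains every finite transitive reflexive tree $(T,\leq)$ of branching $\leq2$ in the form required by Definition~\ref{def:thick-uni} (take $R$ to be the strict order, so that $R^*=\leq$ and $(T,R)\in\clI$); hence $\clI$, and a fortiori $\SubFrs{\clI}$, is thick, so $\Sat{\LSum{\clI}{\clC}}=\Sat{\LSuma{0}{\clI}{\clC}}$ is $\PSPACE$-hard. Together with the upper bound this gives $\PSPACE$-completeness of $\T0$. The step requiring the most care is the representation $\T0=\Log\LSuma{0}{\clI}{\clC}$: one must verify both inclusions between the characterizing class of \cite{GuramEsakiaDavid2009} and the class of these sums --- in particular that a sum of $\clC$-clusters over a finite strict partial order is indeed weakly transitive with all clusters in $\clC$. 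Once this identification is in place, both complexity bounds follow immediately from the general machinery of Section~\ref{sec:compl}.
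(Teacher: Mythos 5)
Your proposal is correct and follows essentially the same route as the paper, which states the corollary as an immediate consequence of the characterization of $\T0$ from \cite{GuramEsakiaDavid2009}, the decomposition of weakly transitive frames into sums of their clusters over finite partial orders (as in Examples \ref{ex:wk4semant} and \ref{ex:wk4compl}), the $\NP$ bound for satisfiability in the clusters, and the general machinery of Theorem \ref{thm:main_compl_res} and Corollary \ref{cor:Hard}. Your write-up merely makes explicit the steps (the identification $\T0=\Log\LSuma{0}{\clI}{\clC}$, preconicality of $\clC$, thickness of $\clI$) that the paper leaves implicit.
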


\section{Variations}\label{sec:variations}

In this section we are interested in polymodal logics
which can be characterized by models obtained via  \improve{engl}
multiple applications of the sum operation. An important example of such logic is {\em Japaridze's polymodal logic} \cite{JaparJapar,Bekl-Jap}; we also consider {\em lexicographic products of modal logics} introduced in \cite{Balb2009}
and {\em refinements of logics} introduced in \cite{BabRyb2010}; see Sections \ref{subs:Japar} and \ref{sub:refandlexProduct} below.
We anticipate them with some general observations.

\subsection{Iterated sums over unimodal indexes}

Let $\clF$ be a class of $\Al$-frames and  $\clI$  a class of 1-frames.
Let
$\vct{a}=(a_0,\ldots, a_{s-1})$ be a finite sequence of elements of $\Al$.
If $\vct{a}$ is the empty sequence, let
{\em $\vct{a}$-iterated sums of $\clF$} be the elements of $\clF$. For
$0<s<\omega$, let
{\em $\vct{a}$-iterated sums of $\clF$} be  $a_0$-sums of $(a_1,\ldots,a_{s-1})$-iterated sums:
that is, an {\em $\vct{a}$-iterated sum of frames in $\clF$ over frames in $\clI$} is a frame of form
$\LSuma{a_0}{\frI}{\frH_i}$, where $\frI\in \clI$ and every $\frH_i$ is an $(a_1,\ldots,a_{s-1})$-iterated sum of frames in $\clF$ over frames in $\clI$.
The class of all such sums is denoted by
$\LSuma{\vct{a}}{\clI}{\clF}$.

Since $\LSuma{\vct{a}}{\clI}{\clF}$ is $\LSuma{a_0}{\clI}{\LSuma{(a_1,\ldots,a_{s-1})}{\clI}{\clF}}$, \improve{Bad notation?}
by Theorem \ref{thm:main_compl_res} we obtain
\begin{corollary}\label{cor:iteratedPSpace}
Let $\AlM<\omega$, $\clF$ a class of $\AlM$-frames,  and $\vct{a}$ a finite sequence of elements of $\Al$.
If
  $\clI$  is a class of Noetherian orders that contains all finite trees and is closed under finite disjoint unions,
  then
$\Sat{\univ{(\LSuma{\vct{a}}{\clI}{\clF})}} \RedPSP    \Sat{\univ{\clF}}$.
\end{corollary}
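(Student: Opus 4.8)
The plan is to argue by induction on the length $s$ of the sequence $\vct{a}=(a_0,\dts,a_{s-1})$, exploiting the crucial feature of Theorem \ref{thm:main_compl_res}(\ref{thmItem:main_compl_res-2}): it reduces $\Sat{\univ{(\LSuma{a}{\clI}{\clF})}}$ to $\Sat{\univ{\clF}}$, i.e., it keeps both sides in the \emph{same} universal-modality form and hence can be fed into itself. This is exactly the self-reproducing shape that makes iteration possible, and it is the reason the corollary uses the $\univ{(\cdot)}$-version of the main theorem rather than the plain one.

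For the base case $s=0$ the sequence is empty, so by the definition of iterated sums $\LSuma{\vct{a}}{\clI}{\clF}=\clF$, and the claim $\Sat{\univ{\clF}}\RedPSP\Sat{\univ{\clF}}$ holds trivially via the identity reduction. For the inductive step I would split off the head of the sequence: write $\vct{a}=(a_0,a_1,\dts,a_{s-1})$, let $\vct{a}'=(a_1,\dts,a_{s-1})$ be the tail, and set $\clG=\LSuma{\vct{a}'}{\clI}{\clF}$. Since an $a$-sum of $\AlM$-frames over a $1$-frame is again an $\AlM$-frame (Definition \ref{def:asum}), a routine check along the inductive definition shows that $\clG$ is a class of $\AlM$-frames and that $\LSuma{\vct{a}}{\clI}{\clF}=\LSuma{a_0}{\clI}{\clG}$. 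The inductive hypothesis applied to the shorter sequence $\vct{a}'$ then yields $\Sat{\univ{\clG}}\RedPSP\Sat{\univ{\clF}}$, and, because $a_0<\AlM<\omega$, $\clI$ contains all finite trees, and $\clI$ is closed under finite disjoint unions, Theorem \ref{thm:main_compl_res}(\ref{thmItem:main_compl_res-2}) applies with $\clG$ in place of $\clF$ and gives $\Sat{\univ{(\LSuma{a_0}{\clI}{\clG})}}\RedPSP\Sat{\univ{\clG}}$. Composing the two reductions produces $\Sat{\univ{(\LSuma{\vct{a}}{\clI}{\clF})}}\RedPSP\Sat{\univ{\clF}}$, closing the induction.

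The only genuinely delicate point, and the part I would be careful to spell out, is the legitimacy of chaining the reductions: I am relying on transitivity of $\RedPSP$. This is fine because the paper insists that every tape of the oracle machine, including the oracle tape, is polynomial-space bounded, so oracle queries have polynomially bounded size and the standard composition of polynomial-space Turing reductions goes through (the same transitivity is already used in the proof of Theorem \ref{thm:main_compl_res}). I would also note that $s=|\vct{a}|$ is a fixed constant, independent of the input formula, so the composition of $s$ polynomial-space reductions is still a single polynomial-space reduction. No additional semantic work is required beyond the main theorem; the whole content of the corollary is the observation that its universal-modality version reproduces its own hypothesis, and the rest is bookkeeping about composing constantly many reductions.
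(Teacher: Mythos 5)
Your proof is correct and follows essentially the same route as the paper: the paper's own (one-line) argument likewise rewrites $\LSuma{\vct{a}}{\clI}{\clF}$ as $\LSuma{a_0}{\clI}{\LSuma{(a_1,\ldots,a_{s-1})}{\clI}{\clF}}$ and applies Theorem \ref{thm:main_compl_res}(\ref{thmItem:main_compl_res-2}) iteratively, leaving the induction and the composition of the constantly many $\RedPSP$-reductions implicit. Your write-up merely makes those implicit steps explicit, which is a faithful elaboration rather than a different proof.
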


Theorem \ref{thm:smll-tree-for-Csat} reduces satisfiability on sums over Noetherian orders to sums over finite trees.
This theorem can also be extended for the case of iterated sums \cite{AiML2018-sums}. Namely, let
$\vct{a}=(a_0,\ldots a_{s-1})\in \Al^s$, $0<s<\omega$.
Then
for every $\Al$-tie $\tau=(\vf,\Phi,\oGamma)$ we have:
\begin{equation}\label{eq:polymod-smll-tree-for-Csat}
\tau \text{ is satisfiable in }  \LSuma{\vct{a}}{\NPO}{\clF} \tiff \tau \text{ is satisfiable in }  {\Disk{\lvf}\LSuma{\vct{a}}{\Tr(\lvf,\lvf)}{\clF}}.
\end{equation}
This can be proven by induction of the length of $\vct{a}$  with the help of the following lemma (see the proof of \cite[Theorem 5.2]{AiML2018-sums} for the details):

\begin{lemma}\label{psp:lem:sums-disums-e34}
Let
$\clF$ be a class of $\AlM$-frames and $a\in\AlM$. Then
every frame in $\LSuma{a}{\NPO}{\bigsqcup{\clF}}$
is isomorphic to a frame in $\LSuma{a}{\NPO}{\clF}$.
\end{lemma}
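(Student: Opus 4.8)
The plan is to flatten the two-layer construction. A frame in $\LSuma{a}{\NPO}{\bigsqcup\clF}$ has the form $\LSuma{a}{\frI}{\frG_i}$, where $\frI=(I,S)$ is a non-empty Noetherian order and each summand $\frG_i=\bigsqcup_{j\in J_i}\frF_{ij}$ is a disjoint union of frames $\frF_{ij}\in\clF$ over a non-empty index set $J_i$. By Definition~\ref{def:asum}, this $a$-sum is the genuine sum $\LSum{i\in\frI'}{\frG_i}$ over the $\AlM$-frame $\frI'$ whose $a$-th relation is $S$ and whose other relations are empty. First I would rewrite it as a \emph{single} sum of the frames $\frF_{ij}$ over a suitable indexing frame, using the distributivity of sums over disjoint unions already recorded in Theorem~\ref{thm:basicTrPr}(\ref{item3c:basicTrPr}).

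Applying that item with the indexing frame $\frI'$ and the families $(J_i)_{i\in I}$, $(\frF_{ij})$ gives
$$\LSuma{a}{\frI}{\frG_i}=\LSum{i\in\frI'}{{\bigsqcup}_{j\in J_i}\frF_{ij}}\isom\LSum{(i,j)\in\frK}{\frF_{ij}},\qquad \frK=\LSum{k\in\frI'}{(J_k,\empc)}.$$
The next step is to identify $\frK$ explicitly. Since every relation of $\frI'$ other than the $a$-th is empty, and each summand $(J_k,\empc)$ carries only empty relations, all relations of $\frK$ except the $a$-th vanish, and its $a$-th relation $S'$ is given by $(k,j)\,S'\,(k',j')$ iff $k\neq k'$ and $kSk'$. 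Hence, up to its empty relations, $\frK$ is the unimodal frame $(D,S')$ with $D=\bigsqcup_{k\in I}J_k$, and the right-hand side above is precisely the $a$-sum $\LSuma{a}{(D,S')}{\frF_{ij}}$.

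It then remains to check that $(D,S')\in\NPO$, so that the resulting frame lies in $\LSuma{a}{\NPO}{\clF}$. Non-emptiness is clear (both $I$ and every $J_k$ are non-empty), and irreflexivity is immediate from the clause $k\neq k'$; transitivity and asymmetry of $S'$ transfer directly from the corresponding properties of the strict partial order $S$. I expect the one point needing care — the main, if modest, obstacle — to be converse well-foundedness: an infinite $S'$-ascending chain $(k_0,j_0)\,S'\,(k_1,j_1)\,S'\,\dots$ projects onto $k_0\,S\,k_1\,S\,\dots$, and asymmetry of $S$ forces the $k_n$ to be pairwise distinct, whence this would be an infinite ascending chain in $\frI$, contradicting its Noetherianity. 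With $(D,S')$ thus shown to be a non-empty Noetherian order and each $\frF_{ij}$ in $\clF$, the displayed isomorphism exhibits the original frame as a member (up to isomorphism) of $\LSuma{a}{\NPO}{\clF}$, completing the argument.
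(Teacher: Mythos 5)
Your proposal is correct and follows essentially the same route as the paper: both invoke Theorem~\ref{thm:basicTrPr}(\ref{item3c:basicTrPr}) to flatten the sum of disjoint unions into a single sum over $\LSum{k\in\frI}{(J_k,\emp)}$, and then observe that this index frame is again a non-empty Noetherian order. You merely spell out the verification of Noetherianity (irreflexivity, transitivity, asymmetry, converse well-foundedness) that the paper leaves as an observation.
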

\begin{proof}
By Theorem \ref{thm:basicTrPr}(\ref{item3c:basicTrPr}),
a sum of form
$\LSuma{a}{i\in\frI}{{\bigsqcup}_{j\in J_i}\frF_{ij}}$ is
isomorphic to the sum
$\LSuma{a}{(i,j)\in \LSum{k\in \frI}{(J_k,\emp)}}{\frF_{ij}}$.
It remains to observe that if $\frI=(I,<)\in \NPO$ and  $(J_i)_{i\in I}$ is a family of non-empty sets, then
$\LSum{\frI}{(J_i,\emp)}\in \NPO$. \improve{Improve!}
\end{proof}
In view of Proposition \ref{prop:sat_via_cSat}, from  (\ref{eq:polymod-smll-tree-for-Csat}) we obtain
\begin{corollary}\label{psp:thm:itersums-trees}
Let
$\clF$ be a class of $\Al$-frames, $s<\omega$, and $\vct{a}=(a_0,\ldots a_{s-1})\in \Al^s$.
Then for every $\Al$-formula $\vf$ we have:
$$\vf \text{ is satisfiable in }  \LSuma{\vct{a}}{\NPO}{\clF} \tiff \vf \text{ is satisfiable in }  \LSuma{\vct{a}}{\Tr(\lvf,\lvf)}{\clF}.$$
\end{corollary}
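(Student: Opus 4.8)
The plan is to derive the statement mechanically from the tie-level equivalence (\ref{eq:polymod-smll-tree-for-Csat}), using Proposition \ref{prop:sat_via_cSat} to pass between ordinary satisfiability of a formula and satisfiability of a tie with the empty condition $\empc$.

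First I would treat the degenerate case $s=0$ separately: by the definition of $\vct{a}$-iterated sums, the empty sequence gives $\LSuma{\vct{a}}{\NPO}{\clF}=\clF=\LSuma{\vct{a}}{\Tr(\lvf,\lvf)}{\clF}$, so both sides of the claimed equivalence coincide and there is nothing to prove. Thus I may assume $0<s<\omega$, which is exactly the range for which (\ref{eq:polymod-smll-tree-for-Csat}) has been established.

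For $0<s$ I would chain equivalences. By Proposition \ref{prop:sat_via_cSat}, $\vf$ is satisfiable in $\LSuma{\vct{a}}{\NPO}{\clF}$ iff there is some $\Phi\subseteq\SF{\vf}$ with $\vf\in\Phi$ such that the tie $(\vf,\Phi,\empc)$ is satisfiable in $\LSuma{\vct{a}}{\NPO}{\clF}$. Applying (\ref{eq:polymod-smll-tree-for-Csat}) to each such tie $\tau=(\vf,\Phi,\empc)$ replaces ``satisfiable in $\LSuma{\vct{a}}{\NPO}{\clF}$'' by ``satisfiable in $\Disk{\lvf}\LSuma{\vct{a}}{\Tr(\lvf,\lvf)}{\clF}$'', and then Proposition \ref{prop:sat_via_cSat} applied to this latter class shows that the existence of such a $\Phi$ is equivalent to $\vf$ being satisfiable in $\Disk{\lvf}\LSuma{\vct{a}}{\Tr(\lvf,\lvf)}{\clF}$.

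Finally, I would collapse the outer bounded disjoint union. A formula is satisfiable in a disjoint union $\bigsqcup_{i\in I}\frF_i$ iff it is satisfiable in one of the summands $\frF_i$; this is Proposition \ref{prop:truth-pres-for-cs}(2) specialized to the empty condition (the witnessing point generates a submodel contained in a single component). Since $\lvf\geq 1$, every member of $\LSuma{\vct{a}}{\Tr(\lvf,\lvf)}{\clF}$ is itself a one-summand member of $\Disk{\lvf}\LSuma{\vct{a}}{\Tr(\lvf,\lvf)}{\clF}$, and conversely satisfiability in the bounded union reduces to satisfiability in one summand. Hence $\vf$ is satisfiable in $\Disk{\lvf}\LSuma{\vct{a}}{\Tr(\lvf,\lvf)}{\clF}$ iff it is satisfiable in $\LSuma{\vct{a}}{\Tr(\lvf,\lvf)}{\clF}$, which closes the chain. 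No genuine obstacle arises; the only point requiring a moment's attention is this last passage from $\Disk{\lvf}(\cdot)$ back to a single iterated tree-sum, which is precisely where the distinction between formula-satisfiability and tie-satisfiability is exploited.
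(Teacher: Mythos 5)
Your proof is correct and follows essentially the same route as the paper, which derives the corollary in one line from the tie-level equivalence (\ref{eq:polymod-smll-tree-for-Csat}) together with Proposition \ref{prop:sat_via_cSat}; you merely make explicit the two details the paper leaves implicit, namely the trivial $s=0$ case and the collapse of the bounded disjoint union $\Disk{\lvf}(\cdot)$ via Proposition \ref{prop:truth-pres-for-cs}, both of which are handled correctly.
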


\improve{
\todo{Explain that this corollary is a more subtle instrument than the previous one (about PSPACE-red on iterated sums).
This allows to describe the analog of the algorithm \ref{alg:SatSumFa-poly}. Announce that is will be used to prove complexity of Japar.}

\todo{-  Iterated versions of fmp and dec \\
- No, let us formulate it for lex only in this paper? \\
- No, we can't: we need to apply theorem \ref{Thm:CorrectAlgTree} for Japar; To it is DONE.}

}

\medskip
\begin{remark}
The operation of iterated sum can result in very tangled structures.
In this remark, we expand the formal definition of this operation.


Let $0\leq s<\omega$. Consider a tree $(T,<)$ such that every maximal chain in $T$ has $s+1$ elements.
Let $I$ be the set of maximal elements of $(T,<)$, $J=T\setminus I$,
and for $i\in J$, let $\scc{i}$ be the set of immediate successors of $i$.
A structure $\frT=(T,<,(S_i)_{i\in J})$ such that $S_i$ is a binary relation on $\scc{i}$ is called an {\em indexing tree}.
If also
$(\scc{i},S_i)\in \clI$ for every $i\in J$, then $\frT$ is an {\em indexing tree for $\clI$}.
For 
a sequence $\vct{a}=(a_0,\ldots, a_{s-1})\in \Al^s$,
and a family $(\frF_i)_{i\in I}$ of $\Al$-frames, we define
{\em the $\vct{a}$-sum of $(\frF_i)_{i\in I}$ over the indexing tree $\frT$}, in symbols $\LSuma{\vct{a}}{\frT}{\frF_i}$,
as the following $\Al$-frame $(W,(R_a)_{a\in \Al})$.
The domain of this structure is the disjoint union of the domains of $\frF_i$:
$$W=\{(i,w)\mid i \text{ is maximal in } \frT \text{ and } w \text{ is in } \frF_i
\}
$$
To define the relations $R_a$, consider $(i,w),(j,v)\in W$. If $i=j$, then, as usual, we
put  $(i,w)R_a(j,v)$ iff $w R_{i,a} v$, where $R_{i,a}$ denotes the $a$-th relation in $\frF_i$.
Assume that $i\neq j$. Let $k$ be the infimum $\inf{\{i,j\}}$, and let $h$ be the height of $k$ in $\frT$, that is the number of elements in $\{l\in T\mid l<k\}$.
There are unique immediate successors $i'$ and $j'$ of $k$ such that
$i'\leq i$ and $j'\leq j$.
Hence $0\leq h <s$, $i'\neq j'$, and $i',j' \in \scc{k}$.
We put $(i,w)R_a(j,v)$  iff
$i'S_kj'$
and
$a$ is the $h$-th element of $\vct{a}$.

The class $\LSumAll{\clI}{\clF}$ of {\em iterated
sums of frames in $\clF$ over frames in $\clI$} consists of such structures where $\frT$ is an indexing tree for $\clI$ and
all $\frF_i$ are in $\clF$. One can see that for a fixed $\vct{a}$, the elements of $\LSumAll{\clI}{\clF}$
are (up to isomorphisms) $\vct{a}$-iterated sums $\LSuma{\vct{a}}{\clI}{\clF}$.
\end{remark}

\subsection{Lexicographic sums}
The sum operation does not change the signature.
In many cases (see below) it is convenient to characterize a polymodal logic via the following modification of $a$-sums.
\begin{definition}
Let $\frI=(I,S)$ be a unimodal frame, $(\frF_i)_{i\in I}$ a family of $\Al$-frames,
and $\frF_i=(W_i,(R_{i,a})_{a\in \Al})$.
The {\em lexicographic sum} $\LSuml{\frI}{\frF_i}$ is the $(1+\Al)$-frame
$\left(\bigsqcup_{i \in I} W_i, S^{\lex}, (R_a)_{a<N}\right)$,  where
\begin{eqnarray*}
(i,w) S^\lex  (j,u) &\quad\tiff\quad&   i S j, \\
(i,w) R_a  (j,u) &\quad\tiff\quad&   i = j \;\&\;w R_{i,a}  u.
\end{eqnarray*}
For a class  $\clF$ of $\Al$-frames and a class  $\clI$ of 1-frames, we define $\LSuml{\clI}{\clF}$
as the class of all sums
$\LSuml{\frI}{\frF_i}$, where $\frI \in \clI$ and all $\frF_i$ are in $\clF$.
\end{definition}

Notice that lexicographic sums depend on reflexivity of the indexing frame (unlike sums considered in previous sections, see Remark \ref{rem:independOfRefl}.) This difference is explained in Propositions \ref{prop:lex-asums1} and \ref{prop:lex-asums2} below.

Recall that for an $\Al$-frame $\frF=(W,(R_i)_{i\in \Al})$,
$\univ{\frF}$ is the $(1+\Al)$-frame $(W,W\times W, (R_i)_{i\in \Al})$.
Let
$\empL{\frF}$ be the $(1+\Al)$-frame  $(W,\emp, (R_a)_{a<\Al})$; for a class $\clF$ of frames, let
$\empL{\clF}=\{\empL{\frF}\mid \frF\in\clF\}$.

The following is immediate from the definitions: \comm{DChecked}
\begin{proposition}\label{prop:lex-asums1}
For a unimodal frame $\frI=(I,S)$ and a family  $(\frF_i)_{i\in I}$  of $\Al$-frames,
$$
\LSuml{\frI}{\frF_i}\; =
\;\LSum{\frI'}{\frF'_i}\;=\;\LSuma{0}{\frI}{\frF'_i},
$$
where
$\frI'$ is the $(1+\Al)$-frame $(I,S,\empc)$, and for $i\in I$,
$\frF_i'=\univ{\frF_i}$ whenever
$i$ is reflexive in $\frI$, and $\frF_i'=\empL{\frF_i}$ otherwise.
\end{proposition}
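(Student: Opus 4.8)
The plan is to prove the two equalities separately; since all three frames in question share the domain $\bigsqcup_{i\in I} W_i$, in each case it suffices to check that the $(1+\Al)$ accessibility relations agree coordinate by coordinate.

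I would dispose of the second equality first, as it is essentially definitional. By Definition \ref{def:asum}, the $0$-sum $\LSuma{0}{\frI}{\frF'_i}$ is the sum $\LSum{\frI''}{\frF'_i}$ over the $(1+\Al)$-frame $\frI''$ whose domain is $I$, whose $0$-th relation is $S$, and whose remaining $\Al$ relations are empty. Reading $\empc$ as the length-$\Al$ block of empty relations occupying coordinates $1,\ldots,\Al$, this frame $\frI''$ is literally $(I,S,\empc)=\frI'$, so $\LSuma{0}{\frI}{\frF'_i}=\LSum{\frI'}{\frF'_i}$.

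For the first equality I would expand the sum relations $R^\Sigma_a$ of $\LSum{\frI'}{\frF'_i}$ via Definition \ref{def:sum-poly-extra} and match them against the relations $S^\lex,R_0,\ldots,R_{\Al-1}$ of the lexicographic sum. At a coordinate $1+b$ with $b<\Al$, the relation of $\frI'$ is empty, while the corresponding relation of each $\frF'_i$ (be it $\univ{\frF_i}$ or $\empL{\frF_i}$) is the original $R_{i,b}$; hence the sum relation reduces to $(i,w)R^\Sigma_{1+b}(j,v)$ iff $i=j$ and $wR_{i,b}v$, which is exactly the lexicographic relation at that coordinate. At coordinate $0$, Definition \ref{def:sum-poly-extra} gives $(i,w)R^\Sigma_0(j,v)$ iff either $i\neq j$ and $iSj$, or $i=j$ and $w$ is related to $v$ by the $0$-th relation of $\frF'_i$. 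The off-diagonal case $i\neq j$ already coincides with $S^\lex$, which holds iff $iSj$.

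The only point requiring care — and the whole reason reflexivity of $\frI$ enters — is the diagonal case $i=j$ at coordinate $0$. Here $S^\lex$ is all-or-nothing on the block $\{i\}\times W_i$: it equals $W_i\times W_i$ if $iSi$ and is empty otherwise. This is matched precisely by the design of $\frF'_i$: when $i$ is reflexive we take $\frF'_i=\univ{\frF_i}$, whose $0$-th relation is the universal $W_i\times W_i$, and when $i$ is irreflexive we take $\frF'_i=\empL{\frF_i}$, whose $0$-th relation is empty. Thus the diagonal part of $R^\Sigma_0$ agrees with $S^\lex$ in both cases, completing the coordinatewise comparison and hence the first equality. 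I expect this diagonal bookkeeping to be the main (and essentially only) obstacle; everything else is a routine unpacking of the two definitions.
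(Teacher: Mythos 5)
Your verification is correct and is exactly the argument the paper has in mind: the paper states this proposition as ``immediate from the definitions'' and gives no proof, and your coordinatewise comparison --- including the key observation that the choice of $\univ{\frF_i}$ versus $\empL{\frF_i}$ is precisely what makes the diagonal part of the $0$-th relation match $S^\lex$ on each block --- is the routine unpacking being alluded to. Nothing is missing.
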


\begin{proposition}\label{prop:lex-asums2}
Let $\clF$  be a class of $\Al$-frames.
\begin{enumerate}\label{prop:lex-asums2:item1}
\item If $\clI$ is a class of irreflexive 1-frames, then
$\LSuml{\clI}{\clF}=\LSuma{0}{\clI}{\empL{\clF}}$.
\item If $\clI$ is a class of reflexive 1-frames, then   $\LSuml{\clI}{\clF}=\LSuma{0}{\clI}{\univ{\clF}}$.
\end{enumerate}
\end{proposition}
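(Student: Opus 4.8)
The plan is to deduce both items directly from Proposition \ref{prop:lex-asums1}, which already expresses a single lexicographic sum $\LSuml{\frI}{\frF_i}$ as a $0$-sum of modified summands, the modification at each index $i$ being governed by whether $i$ is reflexive in $\frI$. Both statements are then the two homogeneous specializations (all indices irreflexive, all indices reflexive), and what remains is routine bookkeeping to pass from a frame-wise identity to an identity of classes.

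First I would fix $\frI=(I,S)\in\clI$ together with a family $(\frF_i)_{i\in I}$ with each $\frF_i\in\clF$, and apply Proposition \ref{prop:lex-asums1}: it gives $\LSuml{\frI}{\frF_i}=\LSuma{0}{\frI}{\frF'_i}$, where $\frF'_i=\univ{\frF_i}$ at reflexive indices and $\frF'_i=\empL{\frF_i}$ at irreflexive ones. In item 1 the hypothesis that every frame of $\clI$ is irreflexive means no index is reflexive, so uniformly $\frF'_i=\empL{\frF_i}$ and we obtain the frame-wise identity $\LSuml{\frI}{\frF_i}=\LSuma{0}{\frI}{\empL{\frF_i}}$. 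In item 2 every index is reflexive, so uniformly $\frF'_i=\univ{\frF_i}$ and $\LSuml{\frI}{\frF_i}=\LSuma{0}{\frI}{\univ{\frF_i}}$.

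It then remains to lift these frame-wise identities to the claimed class equalities. For item 1, an arbitrary element of $\LSuml{\clI}{\clF}$ is some $\LSuml{\frI}{\frF_i}$ with $\frI\in\clI$, $\frF_i\in\clF$, and by the above it equals $\LSuma{0}{\frI}{\empL{\frF_i}}$, which lies in $\LSuma{0}{\clI}{\empL{\clF}}$ since each $\empL{\frF_i}\in\empL{\clF}$; conversely, every element $\LSuma{0}{\frI}{\frG_i}$ of $\LSuma{0}{\clI}{\empL{\clF}}$ has $\frG_i=\empL{\frF_i}$ for some $\frF_i\in\clF$, so it equals $\LSuml{\frI}{\frF_i}\in\LSuml{\clI}{\clF}$. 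The only point to verify is that $\frF\mapsto\empL{\frF}$ is a bijection $\clF\to\empL{\clF}$, which is immediate from the definition of $\empL{\clF}$ (one recovers $\frF$ from $\empL{\frF}$ by deleting the empty $0$-th relation); the identical argument with $\univ{\cdot}$ in place of $\empL{\cdot}$ settles item 2.

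I do not expect a genuine obstacle here, since the result is bookkeeping on top of Proposition \ref{prop:lex-asums1}. Were a self-contained argument wanted instead, I would compare the two $(1+\Al)$-frames relation by relation, which makes transparent where reflexivity is used: on the diagonal component $i=j$ the $0$-th relation of $\LSuml{\frI}{\frF_i}$ relates two points exactly when $iSi$, whereas in $\LSuma{0}{\frI}{\frF'_i}$ the diagonal part of the $0$-th relation is inherited from the summand $\frF'_i$ — empty for $\empL{\frF_i}$, universal for $\univ{\frF_i}$. Matching these forces $iSi$ to be false in the irreflexive case (use $\empL{\cdot}$) and true in the reflexive case (use $\univ{\cdot}$), while the relations $R_a$ and the off-diagonal part of the $0$-th relation agree automatically; this is precisely the content already packaged in Proposition \ref{prop:lex-asums1}.
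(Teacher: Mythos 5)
Your proposal is correct and follows exactly the route the paper takes: the paper's proof of this proposition is the single line ``Immediate from Proposition \ref{prop:lex-asums1},'' and your argument is just that deduction spelled out, specializing the frame-wise identity of Proposition \ref{prop:lex-asums1} to the all-irreflexive and all-reflexive cases and then doing the routine passage from frames to classes.
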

\begin{proof}
Immediate from Proposition \ref{prop:lex-asums1}.
\end{proof}

\improve{
\todo{

1. Comment how we translate results for sums to lex. sums.
Certain formulations (or in 3?). Corollary for Balbiani? Ryb-Bab here?}

\todo{
2. Motivation of iterated sums.
}
}

Given a class $\clF$ of frames in an alphabet $\AlA$ \improve{\todo{Do we use this wording}}
and a class of unimodal frames $\clI$, let
{\em $0$-iterated lexicographic sums} be elements of $\clF$,
and for $n<\omega$, let
{\em $(n+1)$-iterated lexicographic sums} be  lexicographic sums of $n$-iterated sums.
The class of $n$-iterated lexicographic sums of frames in $\clF$ over frames in $\clI$ is denoted by $\LSumIt{n}{\clI}{\clF}$.

The next fact is the iterated version of the above proposition.
\improve{$\emp$-shift; $R$-shift; or: universal $\AlB$-lift; idle $\AlB$-lift; }
For a class $\clF$ of $\Al$-frames and $n<\omega$, let
$\empLit{\clF}{n}$ be the class of $(n+\Al)$-frames $(W,(\emp)_n,(R_a)_{a\in \Al})$
such that
$(W,(R_a)_{a\in \Al})\in \clF$. Likewise, let
$\univLit{\clF}{n}$ be the class of frames
$(W,(S_a)_{a\in n+\Al})$ such that $S_a=W\times W$ for $a<n$,
and $(W,(S_{n+a})_{a\in \Al})\in\clF$.
\begin{proposition}\label{prop:lex-asums3}
Let $\Al\leq \omega$, $0<n<\omega$, and let
$\clF$ be a class of $\Al$-frames.
\begin{enumerate}
\item \label{prop:item1-lex-asums3}
If $\clI$ is a class of irreflexive 1-frames, then
$\LSumIt{n}{\clI}{\clF}=\LSuma{(0,\ldots,n-1)}{\clI}{\empLit{\clF}{n}}$.
\item
If $\clI$ is class of reflexive 1-frames, then
$\LSumIt{n}{\clI}{\clF}=\LSuma{(0,\ldots,n-1)}{\clI}{\univLit{\clF}{n}}.
$
\end{enumerate}
\end{proposition}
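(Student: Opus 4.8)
The plan is to argue by induction on $n$, peeling off one lexicographic layer at a time and converting it to an $a$-sum via Proposition \ref{prop:lex-asums2}. The base case $n=1$ is exactly that proposition, since $\empLit{\clF}{1}=\empL{\clF}$, $\univLit{\clF}{1}=\univ{\clF}$, and $\LSuma{(0)}{\clI}{-}=\LSuma{0}{\clI}{-}$. For the inductive step I would use the definitional unfolding $\LSumIt{n+1}{\clI}{\clF}=\LSuml{\clI}{(\LSumIt{n}{\clI}{\clF})}$ and apply Proposition \ref{prop:lex-asums2} to this single outermost lexicographic sum, with the whole class $\LSumIt{n}{\clI}{\clF}$ serving as the family of summands. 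In the irreflexive case this rewrites it as $\LSuma{0}{\clI}{\empL{(\LSumIt{n}{\clI}{\clF})}}$, and the inductive hypothesis then replaces $\LSumIt{n}{\clI}{\clF}$ by $\LSuma{(0,\ldots,n-1)}{\clI}{\empLit{\clF}{n}}$.

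What is left is a purely combinatorial reindexing: lifting an iterated $a$-sum should amount to incrementing every summation index and lifting the innermost summands. The clean ingredient is the single-frame identity
$$\empL{(\LSuma{a}{\frI}{\frF_i})}\;=\;\LSuma{a+1}{\frI}{\empL{\frF_i}},$$
which holds because inserting an empty relation at index $0$ shifts every relation up by one, while an $a$-sum whose index is $a+1\geq 1$ neither adds any pair at index $0$ nor is affected by the freshly inserted empty bottom relation (recall also that $a$-sums ignore reflexivity of the index, Remark \ref{rem:independOfRefl}). Iterating this identity along $(0,\ldots,n-1)$ yields $\empL{(\LSuma{(0,\ldots,n-1)}{\clI}{\clG})}=\LSuma{(1,\ldots,n)}{\clI}{\empL{\clG}}$ for any class $\clG$, and since $\empL{(\empLit{\clF}{n})}=\empLit{\clF}{n+1}$ we obtain
$$\LSumIt{n+1}{\clI}{\clF}=\LSuma{0}{\clI}{\empL{(\LSumIt{n}{\clI}{\clF})}}=\LSuma{0}{\clI}{(\LSuma{(1,\ldots,n)}{\clI}{\empLit{\clF}{n+1}})}=\LSuma{(0,1,\ldots,n)}{\clI}{\empLit{\clF}{n+1}},$$
the last step being the definition of the iterated $a$-sum; this closes case (1).

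For case (2) I would run the same three steps, using the second part of Proposition \ref{prop:lex-asums2} and the universal lift $\univ$ in place of $\empL$. This is where I expect the real difficulty to lie: the universal lift does \emph{not} ride along under reindexing as innocently as the empty lift does. An empty relation placed at the bottom simply stays empty and is carried upward, but a universal relation introduced by a reflexive layer relates all points of an entire component, and one must check that threading it through the incremented $a$-sums $\LSuma{(1,\ldots,n)}{\clI}{\univLit{\clF}{n+1}}$ faithfully reproduces the within-component universality created by each reflexive lexicographic layer, together with its interaction with the reflexive indices below it. Pinning down this universal-lift commutation against the exact meaning of $\univLit{\clF}{n}$ — that is, confirming the within-component universality of every reflexive layer is genuinely recovered from the universal relations carried by the summands — is the delicate point on which case (2) hinges; once it is verified, the induction closes exactly as in case (1).
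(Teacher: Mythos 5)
Your argument for item~(1) is correct and coincides with the paper's: the paper proves the proposition by exactly the induction you describe (peel the outermost lexicographic layer with Proposition~\ref{prop:lex-asums2}, push $\empL{(\cdot)}$ inward with the shift identity $\empL{(\LSuma{a}{\frI}{\frG_i})}=\LSuma{a+1}{\frI}{\empL{\frG_i}}$, then reassemble by the definition of iterated $a$-sums), so there is nothing to add on that side.

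The difficulty you flag in item~(2), however, is not a verification you merely postponed; the step actually fails, and the induction cannot be closed as written. The universal analogue of the shift identity is false: $\univ{(\LSuma{a}{\frI}{\frG_i})}$ carries the \emph{globally} universal relation on $\bigsqcup_i\dom{\frG_i}$ in position $0$, whereas $\LSuma{a+1}{\frI}{\univ{\frG_i}}$ carries in position $0$ only the \emph{blockwise} universal relation $\bigcup_i\bigl(\dom{\frG_i}\times\dom{\frG_i}\bigr)$, since the indexing frame of an $(a+1)$-sum has empty $0$-th relation and so contributes no cross-component pairs there. This is exactly the within-component universality created by a reflexive outer layer: it lives on an entire summand of the outer sum, i.e.\ on a union of innermost blocks, and is not reproduced by the universal relations of $\univLit{\clF}{n}$, which live only on single innermost blocks. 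Indeed item~(2) fails as stated for $n\geq 2$: take $\clF$ a single irreflexive singleton and $\clI=\{\frI_1,\frI_2\}$ with $\frI_1$ a reflexive singleton and $\frI_2$ a two-element reflexive antichain; then $\LSuml{\frI_1}{(\LSuml{\frI_2}{\clF})}$ is a two-point frame whose $0$-th relation is universal, while the $0$-th relation of every frame in $\LSuma{(0,1)}{\clI}{\univLit{\clF}{2}}$ is contained in the diagonal (neither index frame supplies cross-component pairs and the innermost blocks are singletons). The paper's own proof is the one-line ``straightforward induction on $n$'' and does not engage with this point; your instinct that the reflexive case is not ``entirely analogous'' to the irreflexive one is correct, and item~(2) would require either extra closure assumptions on $\clI$ (so that the outer universal block can be absorbed into a single, coarser index frame) or a weaker, logic-level formulation. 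Item~(2) is not used elsewhere in the paper, so the damage is local, but neither your proposal nor the paper's argument establishes it.
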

\begin{proof}
Follows from Propositon \ref{prop:lex-asums1} by a straightforward induction on $n$.
\end{proof}
\improve{
Notation: 
$\LSuml{\clI_0;\ldots;\clI_{\AlB-1}}{\clF})$

$\LSuma{0;\ldots;\AlB-1}{\clI_0;\ldots;\clI_{\AlB-1}}{\clF}$

Perhaps, it will complicate the reading...
}
\improve{
Expanded version; do we need it?
\begin{proof}
We only consider the irreflexive case, the
proof for the reflexive case is entirely analogous.

First, observe that $\empL{(\LSuma{a}{\clI}{\clF})}=
\LSuma{a+1}{\clI}{\empL{\clF}}$ for any $a\in \Al$. Hence,
\begin{equation}\label{eq:lex-asums3}
\empL{(
\LSuma{0}{\clI_0}{\ldots{\LSuma{\AlB-1}{\clI_{\AlB-1}}{
\clF
}}}
)}
=
\LSuma{1}{\clI_0}{\ldots{\LSuma{\AlB}{\clI_{\AlB-1}}{
\empL{\clF}
}}}
\end{equation}
(an easy induction on $\AlB$.)

Let $\clF_0=\clF$ and $\clF_{n+1}=\empL{\clF_n}$. By induction on $\AlB$, we show that
$\LSuml{\clI_0}{\ldots{\LSuml{\clI_{\AlB-1}}{
\clF
}}}$ is
$
\LSuma{0}{\clI_0}{\ldots{\LSuma{\AlB-1}{\clI_{\AlB-1}}{
\clF_\AlB
}}}.$
We have \improve{$\AlB=1$?}
\begin{multline}
\LSuml{\clI_0}{\ldots{\LSuml{\clI_{\AlB-1}}{
\clF
}}}
=
\LSuml{\clI_0}{
\LSuma{0}{\clI_1}{\ldots{\LSuma{\AlB-2}{\clI_{\AlB-1}}{
\clF_{\AlB-1}
}}}
}
=
\\
=
\LSuma{0}{\clI_0}{
\empL{(
\LSuma{0}{\clI_1}{\ldots{\LSuma{\AlB-2}{\clI_{\AlB-1}}{
\clF_{\AlB-1}
}}}
)}
}
=
\LSuma{0}{\clI_0}{\ldots{\LSuma{\AlB-1}{\clI_{\AlB-1}}{
\clF_\AlB
}}};
\end{multline}
the first equality holds by induction hypothesis; the second follows from Proposition \ref{prop:lex-asums2};
the third equality follows from  (\ref{eq:lex-asums3}).
\end{proof}
}

\improve{General corollaries on complexity? Words on the general machinery we described earlier?}
\improve{

\todo{
4. FMP-result for iterated sums, complexity.
}
}
Propositions \ref{prop:lex-asums2}  and \ref{prop:lex-asums3} allow to expand our results on the finite model property and complexity to
the logics of (iterated) lexicographic sums.

\subsection{Japaridze's polymodal logic}\label{subs:Japar}
In this section we show that Japaridze's polymodal provability logic $\GLP$ is decidable in $\PSPACE$.
First, this theorem was proven in \cite{ShapJapar}. Here we
provide a version of the proof based on Theorem \ref{Thm:CorrectAlgTree}.

\improve{Perhaps, some steps (finite signature conservativity) can be simplified}

$\GLP$ is a normal modal logic in the language $\ML(\omega)$. This system was introduced in \cite{JaparJapar} and plays an important
role in proof theory (see, e.g., \cite{Bekl2004Ordinal}).
 $\GLP$ is known to be Kripke incomplete, so we cannot directly apply our tools to analyze it.
\improve{More history: decidability - Japar? Incompleteness- Lev?}
However,
in \cite{Bekl-Jap}, L. Beklemishev introduced a modal logic $\J$, a Kripke complete approximation of $\GLP$.
Semantically, $\J$ is characterised as the logic of frames called {\em stratified},
or {\em hereditary partial orderings} (\cite[Section 3]{Bekl-Jap}).
They are defined as follows.

\begin{definition}
For $\AlA\leq \omega$, let
$\frS_\Al$ be $(\{0\},(\emp)_\Al)$, a singleton $\Al$-frame with empty relations.
For $\Al<\omega$, let $\clJ(\Al)$ be the class
of $\Al$-iterated lexicographic sums
$$\LSumIt{\Al}{\NPO}\{\frS_\omega\}=\ILSum{\Al}{\NPO}{\{\frS_\omega\}}.$$
The class $\clJ$
of {\em hereditary partial orderings} is the class $\bigcup_{\Al<\omega}\clJ(\Al)$.
The {\em logic $\J$} is defined as the logic of the class $\clJ$.
\improve{Do we define a class-formula by induction?
(No problem? Similarly to the transitive closure?...)

If this is indeed a problem, we can remove the definition of $\clJ$ --- we do not need it}

\hide{
In particular, $\clJ(0)=\{\frS_0\}$, and $\clJ(1)$ (up to isomorphisms) is $\NPO$.
The class $\clJ$
of {\em hereditary partial orderings}
is the class of $\omega$-frames $\frF=(W,(R_a)_{a<\omega})$
such that for some $\Al<\omega$, the reduct $(W,(R_a)_{a<\Al})$ of $\frF$ is in $\clJ(\Al)$, and $R_a=\emp$ for $a\geq \Al$. }
\end{definition}
In \cite{Bekl-Jap}, it was shown that
$\GLP$ is
polynomial time many-to-one reducible to
$\J$:
there exists a polynomial time computable  $f:\ML(\omega)\to \ML(\omega)$ such that
$\vf\in \GLP \; \tiff \; f(\vf)\in \J$
(for an explicit description of $f$, see \cite[Lemma 3.4]{BeklFernJoost-Lin}).

\medskip

Our aim is to show that $\J$ is in $\PSPACE$. For our purposes, it is more convenient to work with $a$-sums.
Since ${\frS_\omega}^{(\emp)_\Al}=\frS_\omega$, from Proposition  \ref{prop:lex-asums3}(\ref{prop:item1-lex-asums3}) we have:
\begin{proposition}\label{prop:JhatItem1} For every $\Al<\omega$,
$\clJ(\Al)=\LSuma{0}{\NPO}{\ldots\LSuma{\Al-1}{\NPO}{\{\frS_\omega\}}}$.
\end{proposition}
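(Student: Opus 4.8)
The plan is to read this off directly from Proposition~\ref{prop:lex-asums3}(\ref{prop:item1-lex-asums3}), whose irreflexive clause is tailored to exactly this situation. First I would record the hypothesis check: by definition a Noetherian order is a strict partial order, hence irreflexive, so $\NPO$ is a class of irreflexive $1$-frames and the irreflexive clause applies with $\clI=\NPO$.

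Next, for the main range $0<\Al<\omega$, I would instantiate Proposition~\ref{prop:lex-asums3}(\ref{prop:item1-lex-asums3}) with $\clF=\{\frS_\omega\}$, $\clI=\NPO$, and $n=\Al$, which gives
$$\clJ(\Al)=\LSumIt{\Al}{\NPO}{\{\frS_\omega\}}=\LSuma{(0,\ldots,\Al-1)}{\NPO}{\empLit{\{\frS_\omega\}}{\Al}},$$
the nested $a$-sum on the right being exactly the notation $\LSuma{0}{\NPO}{\ldots\LSuma{\Al-1}{\NPO}{\cdot}}$ used in the statement. The remaining task is to see that the class of summands collapses back to $\{\frS_\omega\}$. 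This is the identity $\empLit{\frS_\omega}{\Al}=\frS_\omega$ recorded just above the statement: the frame $\frS_\omega=(\{0\},(\emp)_\omega)$ already carries $\omega$ empty relations on a singleton, so prepending $\Al$ further empty relations yields a singleton frame with $\Al+\omega=\omega$ empty relations, i.e.\ $\frS_\omega$ again. Hence $\empLit{\{\frS_\omega\}}{\Al}=\{\frS_\omega\}$, and the displayed equation becomes the desired
$$\clJ(\Al)=\LSuma{(0,\ldots,\Al-1)}{\NPO}{\{\frS_\omega\}}.$$

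Finally I would treat the boundary case $\Al=0$ separately, since Proposition~\ref{prop:lex-asums3} assumes $n>0$. Here the claim is immediate from the conventions: $0$-iterated lexicographic sums are by definition the elements of the base class, so $\clJ(0)=\{\frS_\omega\}$, and an empty-sequence iterated $a$-sum is likewise just the base class, so the right-hand side is $\{\frS_\omega\}$ as well. The only genuinely delicate point in the whole argument is the ordinal bookkeeping $\Al+\omega=\omega$ (valid precisely because $\Al$ is finite) that underlies $\empLit{\frS_\omega}{\Al}=\frS_\omega$; everything else is a direct substitution into an already-proven proposition, so I expect no real obstacle beyond getting the reindexing of the empty relations right.
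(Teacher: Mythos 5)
Your proposal is correct and follows exactly the paper's own route: the paper derives this proposition from Proposition~\ref{prop:lex-asums3}(\ref{prop:item1-lex-asums3}) together with the single observation that $\empLit{\frS_\omega}{\Al}=\frS_\omega$, which is precisely your argument. Your explicit check that $\NPO$ consists of irreflexive frames and your separate treatment of the degenerate case $\Al=0$ are sensible additions that the paper leaves implicit.
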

Formally, for every $\Al<\omega$, each frame in $\clJ(\Al)$ has infinitely many relations.
Next facts allow us to consider frames of finite signatures.
Let $$\hat{\clJ}(\Al)= \LSuma{0}{\NPO}{\ldots\LSuma{\Al-1}{\NPO}{\{\frS_\Al\}}}.$$
For an alphabet $\AlC$, a $\AlC$-frame $\frF=(W,(R_a)_{a\in\AlC})$, and $\AlB\leq\AlC$,
let $\frF^{\restr \AlB}$ be the reduct $(W,(R_a)_{a\in\AlB})$;
for a class $\clF$ of $\AlC$-frames, $\clF^{\restr \AlB}=\{\frF^{\restr\AlB} \mid \frF\in\clF\}$.

\begin{proposition}\label{prop:Jhat} For every $\Al<\omega$, we have:
\begin{enumerate}
\item  \label{item:Jhat2}
$\clJ(\Al)^{\restr \Al}=\hat{\clJ}(\Al)$;
\item \label{item:Jhat3}
If $\Al>0$, then every frame in $\clJ^{\restr \Al}$ is isomorphic to a frame in $\hat{\clJ}(\Al)$.
\end{enumerate}
\end{proposition}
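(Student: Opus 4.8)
The plan is to isolate a single commutation identity between the reduct and the $a$-sum and then drive both claims from it, using the absorption Lemma \ref{psp:lem:sums-disums-e34} for the harder direction. First I would record that whenever $a < \AlB \le \omega$, for any unimodal frame $\frI$ and any family $(\frF_i)$ of frames over an alphabet containing $\AlB$,
\[
(\LSuma{a}{\frI}{\frF_i})^{\restr \AlB} \;=\; \LSuma{a}{\frI}{\frF_i^{\restr \AlB}} .
\]
This is immediate from Definition \ref{def:asum}: the index frame feeds only coordinate $a < \AlB$, and for each $b < \AlB$ the relation $R^\Sigma_b$ of the sum refers to the summands only through their $b$-th relations, which $(\cdot)^{\restr \AlB}$ retains. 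Since $\frS_\omega^{\restr \Al} = \frS_\Al$, claim (\ref{item:Jhat2}) then follows by pushing $(\cdot)^{\restr \Al}$ through the nested sums of $\clJ(\Al) = \LSuma{0}{\NPO}{\ldots\LSuma{\Al-1}{\NPO}{\{\frS_\omega\}}}$ (Proposition \ref{prop:JhatItem1}): every sum sits at a coordinate $j < \Al$, so the identity applies at each level, and at the bottom $\{\frS_\omega\}^{\restr \Al} = \{\frS_\Al\}$; the result is exactly $\hat{\clJ}(\Al)$.

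For claim (\ref{item:Jhat3}), fix $\frF \in \clJ(n)$ with $n < \omega$ and split on $n \le \Al$ versus $n > \Al$. If $n \le \Al$, I would argue that $\clJ(n)$ sits inside $\clJ(\Al)$ up to isomorphism: in any $n$-iterated sum each leaf $\frS_\omega$ may be replaced by the chain of trivial sums $\LSuma{n}{(\{*\},\emp)}{\ldots\LSuma{\Al-1}{(\{*\},\emp)}{\frS_\omega}}$ over singleton Noetherian orders, which is isomorphic to $\frS_\omega$ while turning the $n$-iterated sum into an $\Al$-iterated one. Hence $\frF \isom \frF'$ for some $\frF' \in \clJ(\Al)$, and then $\frF^{\restr \Al} \isom (\frF')^{\restr \Al} \in \clJ(\Al)^{\restr \Al} = \hat{\clJ}(\Al)$ by claim (\ref{item:Jhat2}).

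The case $n > \Al$ is the crux and is where Lemma \ref{psp:lem:sums-disums-e34} enters. Writing $\clJ(n) = \LSuma{0}{\NPO}{\ldots\LSuma{\Al-1}{\NPO}{\clU}}$ with $\clU = \LSuma{\Al}{\NPO}{\ldots\LSuma{n-1}{\NPO}{\{\frS_\omega\}}}$, I observe that every frame in $\clU$ has empty first $\Al$ relations, since all of its sums sit at coordinates $\ge \Al$ and $\frS_\omega$ is relationless. Hence each frame of $\clU^{\restr \Al}$ has the form $(W,(\emp)_\Al)$ with $W \ne \emp$, i.e.\ equals the disjoint union $\bigsqcup_{w \in W} \frS_\Al$, so $\clU^{\restr \Al} \se \bigsqcup\{\frS_\Al\}$. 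Pushing $(\cdot)^{\restr \Al}$ through the outer sums by the commutation identity yields $\clJ(n)^{\restr \Al} \se \LSuma{0}{\NPO}{\ldots\LSuma{\Al-1}{\NPO}{\bigsqcup\{\frS_\Al\}}}$. I would then peel the sums from the inside out: by Lemma \ref{psp:lem:sums-disums-e34} every frame of $\LSuma{\Al-1}{\NPO}{\bigsqcup\{\frS_\Al\}}$ is isomorphic to one of $\LSuma{\Al-1}{\NPO}{\{\frS_\Al\}}$, and since an $a$-sum carries isomorphisms of its summands to an isomorphism of the sums, this absorption propagates through the remaining coordinates $\Al-2, \ldots, 0$, placing every frame of $\clJ(n)^{\restr \Al}$, up to isomorphism, in $\hat{\clJ}(\Al)$. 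This step is exactly where $\Al > 0$ is needed, so that a coordinate-$0$ sum is available to absorb the disjoint unions; the main obstacle is the bookkeeping in this last induction, which lifts the single-level Lemma \ref{psp:lem:sums-disums-e34} through the nested sums.
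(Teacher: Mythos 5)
Your proposal is correct and follows essentially the same route as the paper: the reduct--sum commutation identity gives item (\ref{item:Jhat2}) directly, and for item (\ref{item:Jhat3}) you split on $n\le\Al$ versus $n>\Al$, embed $\clJ(n)$ into $\clJ(\Al)$ via trivial singleton-index sums in the first case, and in the second case observe that the truncated inner sums collapse to disjoint unions of $\frS_\Al$, which are then absorbed by Lemma \ref{psp:lem:sums-disums-e34} at the innermost coordinate and lifted through the outer sums. The only cosmetic difference is where you cut the nest of sums (after coordinate $\Al-1$ rather than after coordinate $\Al-2$ as in the paper), which changes nothing of substance.
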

\begin{proof}
From Definition \ref{def:sum-poly-extra} it is immediate that
for alphabets $\AlB\leq \AlC$ and classes of $\AlC$-frames
$\clF,\clI$,
\begin{equation}\label{eq:red}
\left(\LSum{\clI}{\ldots
\LSum{\clI}{\clF}
}\right)^{\restr \AlB}
=
\LSum{\clI^{\restr \AlB}}{\ldots
\LSum{\clI^{\restr \AlB}}{\left(\clF^{\restr \AlB}\right)}
}.
\end{equation}
\hide{
\begin{equation}\label{eq:red}
(\LSum{\clI}{\ldots
\LSum{\clI}{\clF})
}^{\restr \AlB}
=
\LSum{\clI^{\restr \AlB}}{\ldots
\LSum{\clI^{\restr \AlB}}{\clF^{\restr \AlB}}
}.
\end{equation}
}
\hide{
\begin{equation}\label{eq:red}
(\LSum{\clI_0}{\ldots
\LSum{\clI_s}{\clF})
}^{\restr \AlB}
=
\LSum{\clI_0^{\restr \AlB}}{\ldots
\LSum{\clI_s^{\restr \AlB}}{\clF^{\restr \AlB}}
}.
\end{equation}
}
\improve{Reread; seems to be to general enough to obscure}

Clearly, ${\frS_\omega}^{\restr \Al}$ is $\frS_\Al$. Now we obtain (1) from Proposition \ref{prop:JhatItem1}  and Definition \ref{def:asum}.

To prove (2), let us fix $\AlB<\omega$ and show that every frame in $\clJ(\AlB)^{\restr \Al}$ is isomorphic to a frame in $\hat{\clJ}(\Al)$.
\improve{It is unclear what is going on here}

First,
observe that $\clJ(\AlB+1)$ contains copies of all frames in $\clJ(\AlB)$.
Indeed,
every frame in the class
$
\LSuma{0}{\NPO}
{\ldots{\LSuma{\AlB-1}{\NPO}
{
\{\frS_\omega\}
}}}
$
is isomorphic to a frame in $\LSuma{0}{\NPO}
{\ldots{\LSuma{\AlB-1}{\NPO}
{
\LSuma{\AlB}{\NPO}
{\{\frS_\omega\}}
}
}}$, since
$\LSuma{\AlB}{\NPO}
{\{\frS_\omega\}}$
contains a copy of $\frS_\omega$.
Hence, if $\AlB< \AlA<\omega$, $\clJ(\AlA)$ contains copies of all frames in $\clJ(\AlB)$.

The case $\AlA< \AlB$ is more interesting.
Consider the class $\clG=\LSuma{\Al-1}{\NPO}{\ldots{\LSuma{\AlB-1}{\NPO}\{\frS_\omega\}}}$.
By (\ref{eq:red}) and Proposition \ref{prop:JhatItem1}, we have:
$$
\clJ(\AlB)^{\restr \Al}=
\left(\LSuma{0}{\NPO}
{\ldots{\LSuma{\AlB-1}{\NPO}
{
\{\frS_\omega\}
}}}\right)^{\restr \Al}
=
\LSuma{0}{\NPO}
{\ldots{\LSuma{\AlA-2}{\NPO}
{
\clG^{\restr \Al}
}}}.
$$
By (\ref{eq:red}) again, $\clG^{\restr \Al}$
can be represented as sums of disjoin unions of singletons:
$$
\left(\LSuma{\Al-1}{\NPO}{\ldots{\LSuma{\AlB-1}{\NPO}\{\frS_\omega\}}}\right)^{\restr \Al}=
\LSuma{\Al-1}{\NPO}{\,\underbrace{\bigsqcup\ldots \bigsqcup}_{\AlB-\AlA\text{ times}}\{\frS_\Al\}},
$$
since a sum over the structure with all relations empty is just a disjoint sum.
By Lemma \ref{psp:lem:sums-disums-e34},
 every frame in $\clG^{\restr \Al}$ is isomorphic to a frame in the class
 $\LSuma{\Al-1}{\NPO}{\{\frS_\Al\}}$. It follows that $\hat{\clJ}(\Al)$ contains copies of
all frames in $\clJ(\AlB)^{\restr \AlA}$.
\end{proof}
\improve{\todo{NUMERATION - equations and items in proposition!} ?}
\begin{proposition}\label{prop:AfragmentOfJ}
For every $\Al<\omega$, $\J\cap\ML(\Al)=\Log(\hat{\clJ}(\Al))$.
\end{proposition}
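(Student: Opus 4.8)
The plan is to reduce the statement to Proposition~\ref{prop:Jhat} by means of the elementary observation that the validity of an $\Al$-formula depends only on the first $\Al$ relations of a frame. First I would record this observation precisely: for any $\omega$-frame $\frF=(W,(R_a)_{a<\omega})$, any valuation $\theta$, and any $\vf\in\ML(\Al)$, a straightforward induction on $\vf$ shows that $(\frF,\theta),w\mo\vf$ iff $(\frF^{\restr\Al},\theta),w\mo\vf$ for every $w$; the only nontrivial inductive step is $\Di_a\psi$ with $a<\Al$, where the relevant relation $R_a$ is the same in $\frF$ and in its reduct $\frF^{\restr\Al}$. Since the valuations available on $\frF$ and on $\frF^{\restr\Al}$ coincide (they are the same maps $\PV\to 2^W$), it follows that $\vf$ is valid in $\frF$ iff it is valid in $\frF^{\restr\Al}$.

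Consequently, for $\vf\in\ML(\Al)$ we have $\vf\in\J$ iff $\vf$ is valid in every frame of $\clJ$ iff $\vf$ is valid in every frame of $\clJ^{\restr\Al}$; in other words,
$$\J\cap\ML(\Al)=\Log(\clJ^{\restr\Al}).$$
It therefore remains to prove $\Log(\clJ^{\restr\Al})=\Log(\hat{\clJ}(\Al))$, and for this I would invoke Proposition~\ref{prop:Jhat}. Assume first $\Al>0$. Since $\clJ(\Al)\subseteq\clJ$, item~(\ref{item:Jhat2}) gives $\hat{\clJ}(\Al)=\clJ(\Al)^{\restr\Al}\subseteq\clJ^{\restr\Al}$; as a larger class validates fewer formulas, this yields $\Log(\clJ^{\restr\Al})\subseteq\Log(\hat{\clJ}(\Al))$. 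Conversely, item~(\ref{item:Jhat3}) states that every frame in $\clJ^{\restr\Al}$ is isomorphic to a frame in $\hat{\clJ}(\Al)$; since validity is invariant under isomorphism, any formula valid in $\hat{\clJ}(\Al)$ is valid throughout $\clJ^{\restr\Al}$, giving the reverse inclusion. Combining the two inclusions yields the desired equality.

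The degenerate case $\Al=0$ is immediate: $\ML(0)$ contains no modalities, so $\clJ^{\restr 0}$ and $\hat{\clJ}(0)=\{\frS_0\}$ are both nonempty classes of relationless sets, and the logic of any such class is exactly classical propositional logic; hence $\Log(\clJ^{\restr 0})=\Log(\hat{\clJ}(0))$ as well. I do not expect a genuine obstacle here: the real content of the argument has already been absorbed into Proposition~\ref{prop:Jhat}, and once the reduct observation is in place the present statement is a direct corollary, the only care being the bookkeeping of the two inclusions and the trivial case $\Al=0$.
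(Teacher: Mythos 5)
Your proof is correct and follows essentially the same route as the paper: both arguments reduce the claim to Proposition~\ref{prop:Jhat}, using item~(\ref{item:Jhat2}) for the inclusion $\hat{\clJ}(\Al)\subseteq\clJ^{\restr\Al}$ and item~(\ref{item:Jhat3}) for the converse, together with the observation that an $\Al$-formula cannot distinguish a frame from its $\Al$-reduct. The only cosmetic difference is that the paper phrases the argument dually in terms of satisfiability and explicitly sets aside the trivial case $\Al=0$, which you handle directly.
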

\improve{
\todo{$\J$ - fix typeset} }
\begin{proof}
By Proposition \ref{prop:Jhat}(\ref{item:Jhat2}),
$\hat{\clJ}(\Al) \subseteq \clJ^{\restr \Al}$, so if $\vf$ is satisfiable in
$\hat{\clJ}(\Al)$, then it is satisfiable in $\clJ$. Conversely, if $\vf\in\ML(\AlA)$
is satisfiable in $\clJ$, then it is satisfiable in
$\clJ^{\restr \Al}$, and hence --- in
$\hat{\clJ}(\Al)$ by Proposition \ref{prop:Jhat}(\ref{item:Jhat3}). \improve{(we ignore the trivial situation when $\Al=0$).}
\end{proof}

\todo{Recheck above}

It is trivial that the satisfiability problem on the singleton $\univ{\frS_\Al}$
is in $\PSPACE$ (in fact, it is in $\NP$).
\improve{CSAT in P!}
From Corollary \ref{cor:iteratedPSpace} and Proposition \ref{prop:Jhat}(\ref{item:Jhat2}), we obtain
\begin{corollary}
For every $\Al<\omega$,  $\Sat{\univ{\hat{\clJ}(\AlA)}}\in \PSPACE$.
\end{corollary}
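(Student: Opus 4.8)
The plan is to recognise $\hat{\clJ}(\Al)$ as a single instance of the iterated-sum construction and then invoke the reduction supplied by Corollary~\ref{cor:iteratedPSpace}. By the very definition of $\hat{\clJ}(\Al)$ we have
\[
\hat{\clJ}(\Al)=\LSuma{0}{\NPO}{\ldots\LSuma{\Al-1}{\NPO}{\{\frS_\Al\}}}=\LSuma{(0,\ldots,\Al-1)}{\NPO}{\{\frS_\Al\}},
\]
so $\hat{\clJ}(\Al)$ is precisely the class $\LSuma{\vct{a}}{\clI}{\clF}$ for $\vct{a}=(0,\ldots,\Al-1)$, $\clI=\NPO$, and $\clF=\{\frS_\Al\}$. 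Crucially, $\clF$ is a finite-signature class (of $\Al$-frames), and the entries of $\vct a$ are elements of $\Al$, so the hypotheses $\AlM<\omega$ of Corollary~\ref{cor:iteratedPSpace} are met with $\AlM=\Al$.

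Next I would verify that $\clI=\NPO$ satisfies the remaining assumptions of Corollary~\ref{cor:iteratedPSpace}. It is a class of Noetherian orders by definition; it contains all finite trees, since every finite strict partial order is trivially converse well-founded; and it is closed under finite disjoint unions, because any ascending chain in the disjoint union of partial orders is contained in a single summand, so a disjoint union of Noetherian orders is again a Noetherian order. With these hypotheses discharged, Corollary~\ref{cor:iteratedPSpace} yields
\[
\Sat{\univ{\hat{\clJ}(\Al)}}\RedPSP\Sat{\univ{\{\frS_\Al\}}}.
\]

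Finally, I would dispose of the oracle problem on the right. The frame $\univ{\frS_\Al}$ is a single $(1+\Al)$-frame with one-point domain on which the universal relation is the reflexive loop and all remaining relations are empty; hence every modality acts trivially (the universal modality as the identity, every other $\Di_a$ as $\bot$), and satisfiability of a formula reduces to evaluating it under a guessed propositional valuation of the single point. Thus $\Sat{\univ{\{\frS_\Al\}}}\in\NP\subseteq\PSPACE$, exactly as noted just before the statement. Since this oracle is itself in $\PSPACE$ and $\PSPACE$ is closed under polynomial-space Turing reductions to $\PSPACE$ oracles (a polynomial-space-bounded oracle machine can simulate each query in polynomial space), the displayed reduction places $\Sat{\univ{\hat{\clJ}(\Al)}}$ in $\PSPACE$.

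I do not expect a genuine obstacle here: the result is essentially a direct specialisation of Corollary~\ref{cor:iteratedPSpace}. The only points that warrant a sentence of justification are the closure of $\NPO$ under finite disjoint unions (needed to legitimise the application of the corollary) and the standard composition principle that a polynomial-space reduction to a $\PSPACE$ oracle stays within $\PSPACE$; both are routine, so the argument is short.
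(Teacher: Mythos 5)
Your proposal is correct and follows essentially the same route as the paper: the paper likewise derives the claim by applying Corollary~\ref{cor:iteratedPSpace} with $\clI=\NPO$ and $\clF=\{\frS_\Al\}$ and noting that $\Sat{\univ{\frS_\Al}}$ is trivially in $\NP\subseteq\PSPACE$. You merely spell out the routine verifications (closure of $\NPO$ under finite disjoint unions, composition of $\PSPACE$ reductions) that the paper leaves implicit.
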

Hence, every fragment of $\J$ with finitely many
modalities is in $\PSPACE$.
The above corollary does not directly imply that $\Sat\clJ$ is in $\PSPACE$.
But the latter fact is a corollary of Theorem \ref{Thm:CorrectAlgTree} and the observations below. \improve{mention
another theorem - about reduction to small trees. Maybe, other warding - ``inspection of the algorithm}

\begin{lemma}\label{lem:jap-treesums1}
Let $\Al<\omega$.
An $\Al$-formula $\vf$ is satisfiable in $\J$ iff
$\vf$ is satisfiable in
$\LSuma{(0,\ldots,\Al-1)}{\Tr(\lvf,\lvf)}{\{\frS_\Al\}}$.
\end{lemma}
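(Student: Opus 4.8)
The plan is to assemble the statement from three results already established, chaining equivalences between satisfiability in $\J$, in the finite-signature class $\hat{\clJ}(\Al)$, and in the tree-indexed iterated sums.

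First I would reduce satisfiability in $\J$ to satisfiability in $\hat{\clJ}(\Al)$. Since $\vf$ is an $\Al$-formula, so is $\neg\vf$; hence $\neg\vf\in\J$ iff $\neg\vf\in\J\cap\ML(\Al)$. By Proposition \ref{prop:AfragmentOfJ}, $\J\cap\ML(\Al)=\Log(\hat{\clJ}(\Al))$, and dualising validity to satisfiability we obtain that $\vf$ is satisfiable in $\J$ iff $\vf$ is satisfiable in $\hat{\clJ}(\Al)$. Next I would recast $\hat{\clJ}(\Al)$ as an iterated $a$-sum over Noetherian orders: by its defining equation $\hat{\clJ}(\Al)=\LSuma{0}{\NPO}{\ldots\LSuma{\Al-1}{\NPO}{\{\frS_\Al\}}}$, which in the iterated $a$-sum notation is precisely $\LSuma{(0,\ldots,\Al-1)}{\NPO}{\{\frS_\Al\}}$.

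Finally I would apply the tree-reduction for iterated sums. Instantiating Corollary \ref{psp:thm:itersums-trees} with $\clF=\{\frS_\Al\}$ and $\vct{a}=(0,\ldots,\Al-1)$ (of length $\Al$) gives that $\vf$ is satisfiable in $\LSuma{(0,\ldots,\Al-1)}{\NPO}{\{\frS_\Al\}}$ iff $\vf$ is satisfiable in $\LSuma{(0,\ldots,\Al-1)}{\Tr(\lvf,\lvf)}{\{\frS_\Al\}}$. Chaining the three equivalences yields the lemma.

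There is no deep obstacle here: all the real work sits inside Proposition \ref{prop:AfragmentOfJ} (the finite-signature conservativity, itself resting on Proposition \ref{prop:Jhat}) and Corollary \ref{psp:thm:itersums-trees} (the passage from Noetherian indices to small finite trees, ultimately Theorem \ref{thm:smll-tree-for-Csat} lifted to iterated sums). The only points that demand care are the correct dualisation between validity in a class of frames and satisfiability, and the degenerate case $\Al=0$, where $\vct{a}$ is the empty sequence, the iterated sum collapses to $\{\frS_0\}$, and the equivalence reduces to the triviality that a Boolean formula is satisfiable in a singleton iff it is Boolean-satisfiable.
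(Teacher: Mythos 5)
Your proposal is correct and follows essentially the same route as the paper: the paper's proof is exactly the two-step chain through Proposition \ref{prop:AfragmentOfJ} (satisfiability in $\J$ equals satisfiability in $\hat{\clJ}(\Al)$) followed by Corollary \ref{psp:thm:itersums-trees} applied to $\hat{\clJ}(\Al)=\LSuma{(0,\ldots,\Al-1)}{\NPO}{\{\frS_\Al\}}$. Your extra care about dualising validity to satisfiability and about the degenerate case $\Al=0$ only makes explicit what the paper leaves implicit.
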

\begin{proof}
By Proposition \ref{prop:AfragmentOfJ}, $\vf$ is satisfiable in $\J$  iff it is
satisfiable in $\hat{\clJ}(\Al)$.  By Corollary \ref{psp:thm:itersums-trees},
the latter is equivalent to the satisfiability of $\vf$ in $\LSuma{(0,\ldots,\Al-1)}{\Tr(\lvf,\lvf)}{\{\frS_\Al\}}$.
\end{proof}

Consider a formula $\vf$ in the language $\ML_\omega$.
Let ${a_0}<\dots<{a_{N-1}}$ be the increasing sequence of indices of all occurring in $\vf$ modalities.
Put $\Nmod{\vf}=N$. Let
$\hat{\vf}$ be the
result of replacing $a_b$-modalities by $b$-modalities in $\vf$.
Note that
\begin{equation}\label{eq:conserv-hat}
\text{$\hat{\vf}$ is an $\Nmod{\vf}$-formula, and $\Nmod{\vf}<\lvf={\#\hat{\vf}}$.}
\end{equation}

\begin{lemma}\cite[Lemma 3.5]{BeklFernJoost-Lin}\label{lem:JapConserv}.
$\vf\in \J  \tiff  \hat{\vf}\in \J$.
\end{lemma}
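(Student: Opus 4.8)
The plan is to reduce the claim to a purely semantic statement about the finite-signature classes $\hat{\clJ}(\cdot)$ and to show that inserting or deleting the modality-levels that $\vf$ does not use affects neither satisfiability nor validity. Write $A=\{a_0<\dots<a_{N-1}\}$ for the set of indices of modalities occurring in $\vf$, so that $N=\Nmod{\vf}$ (if $N=0$ the statement is trivial since then $\hat{\vf}=\vf$). Since $\neg\vf$ has exactly the same occurring modalities as $\vf$ and $\widehat{\neg\vf}=\neg\hat{\vf}$, it suffices to prove the dual statement for satisfiability: $\vf$ is satisfiable in $\clJ$ iff $\hat{\vf}$ is satisfiable in $\clJ$. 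As $\vf\in\ML(a_{N-1}+1)$ and $\hat{\vf}\in\ML(N)$, Proposition \ref{prop:AfragmentOfJ} turns this into the equivalence: $\vf$ is satisfiable in $\hat{\clJ}(a_{N-1}+1)$ iff $\hat{\vf}$ is satisfiable in $\hat{\clJ}(N)$.

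For a frame $\frF$ whose signature contains all indices in $A$, let $\frF^{\flat}$ be the $N$-frame $(\dom{\frF},(R_{a_b})_{b<N})$ obtained by keeping the relations at the used indices and renaming $a_b$ to $b$. A trivial induction on formulas gives, for every model $\mM$ on $\frF$ with the induced model $\mM^{\flat}$ on $\frF^{\flat}$ and every point $w$, that $\mM,w\mo\vf$ iff $\mM^{\flat},w\mo\hat{\vf}$; hence $\vf$ is satisfiable in $\frF$ iff $\hat{\vf}$ is satisfiable in $\frF^{\flat}$. Thus the two satisfiability problems are linked by the operation $(\cdot)^{\flat}$, and the remaining content is a structural comparison of $\{\frF^{\flat}\mid\frF\in\hat{\clJ}(a_{N-1}+1)\}$ with $\hat{\clJ}(N)$.

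The key observation is how $(\cdot)^{\flat}$ interacts with iterated $a$-sums, exactly in the spirit of identity (\ref{eq:red}): applying $\flat$ to an $a$-sum $\LSuma{a}{\frI}{\frF_i}$ yields, when $a\in A$, an $a'$-sum of the $\flat$-reducts of the summands (with $a$ renamed to its rank $a'$), whereas when $a\notin A$ the only cross-block relation is discarded and $\flat$ yields the disjoint union $\bigsqcup_i\frF_i^{\flat}$ (a sum over an all-empty index frame being a disjoint union). Applying this level by level to $\hat{\clJ}(a_{N-1}+1)=\LSuma{0}{\NPO}{\dots\LSuma{a_{N-1}}{\NPO}{\{\frS_{a_{N-1}+1}\}}}$ shows that a typical $\frF^{\flat}$ is obtained from $\LSuma{0}{\NPO}{\dots\LSuma{N-1}{\NPO}{\{\frS_N\}}}$ by inserting, before and between the $N$ genuine sums, blocks of disjoint unions coming from the skipped indices. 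For the forward direction I would pass to the cone of a point satisfying $\hat{\vf}$ (Proposition \ref{prop:truth-pres-for-cs} with the empty condition preserves truth under generated submodels), which removes the leading disjoint unions, and then repeatedly apply Lemma \ref{psp:lem:sums-disums-e34} to absorb each interspersed disjoint union into the Noetherian order of the sum just above it; the result is isomorphic to a frame in $\hat{\clJ}(N)$. For the converse I would start from $\frG\in\hat{\clJ}(N)$ and build $\frF\in\hat{\clJ}(a_{N-1}+1)$ with $\frF^{\flat}\cong\frG$ by relabelling the genuine sums and inserting, at each skipped index, a sum over the one-point empty Noetherian order (which lies in $\NPO$ and adds only a level with empty relation, without new points); then satisfiability of $\hat{\vf}$ in $\frG$ transfers back to satisfiability of $\vf$ in $\frF$.

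I expect the main obstacle to be the forward absorption step: one must verify that the disjoint unions produced by the skipped levels really sit directly underneath a genuine $a$-sum, so that Lemma \ref{psp:lem:sums-disums-e34} applies, which requires tracking the associativity of iterated sums (Theorem \ref{thm:basicTrPr}(\ref{item3:basicTrPr})) through the $\flat$-reduction. Once the structural correspondence between $\{\frF^{\flat}\}$ and $\hat{\clJ}(N)$ is established, combining it with Proposition \ref{prop:AfragmentOfJ} and the negation remark yields $\vf\in\J$ iff $\hat{\vf}\in\J$.
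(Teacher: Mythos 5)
Your argument is essentially correct, but it takes a genuinely different route from the paper: the paper does not prove Lemma~\ref{lem:JapConserv} at all --- it imports it as an external result from \cite{BeklFernJoost-Lin} --- whereas you derive it self-containedly from the paper's own semantic machinery. Your reduction chain is sound: dualizing to satisfiability via $\widehat{\neg\vf}=\neg\hat{\vf}$, invoking Proposition~\ref{prop:AfragmentOfJ} to replace $\clJ$ by $\hat{\clJ}(a_{N-1}+1)$ and $\hat{\clJ}(N)$, and observing that the reduct-and-rename operation $(\cdot)^{\flat}$ turns an $a$-sum into a renamed sum when $a$ is a used index and into a plain disjoint union when it is not (this is exactly the computation behind (\ref{eq:red}), where a sum over an index frame with all relations empty degenerates to $\bigsqcup$). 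The forward absorption then goes through by downward induction on $b$: each block of skipped levels between $a_b$ and $a_{b+1}$ contributes iterated disjoint unions sitting immediately inside the genuine $b$-sum over $\NPO$, so Lemma~\ref{psp:lem:sums-disums-e34} (together with associativity of $\bigsqcup$) absorbs them, and only the leading block (levels $<a_0$) needs the separate observation that satisfiability in a disjoint union is satisfiability in a component. On that last point you should cite Proposition~\ref{prop:truth-pres-for-cs}(2) (or the fact that a component is a generated subframe) rather than ``the cone of a point'': the point-generated cone may be a proper subframe of the component and need not lie in $\hat{\clJ}(N)$, whereas the whole component does, and that is all you need. The converse direction is even easier than your construction suggests: a single frame is a one-element disjoint union, so $\hat{\clJ}(N)$ embeds (up to isomorphism) into the class of $\flat$-reducts directly, without inserting singleton index frames --- though your insertion of sums over the one-point order in $\NPO$ also works, since it adds an empty relation and no new points. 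What your proof buys is independence from \cite{BeklFernJoost-Lin} for this step, at the cost of leaning on Proposition~\ref{prop:AfragmentOfJ}, whose proof (via Proposition~\ref{prop:Jhat} and Lemma~\ref{psp:lem:sums-disums-e34}) does not depend on Lemma~\ref{lem:JapConserv}, so there is no circularity.
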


\improve{
\todo{Below some induction on ``classes''. Ugly. Use  $\Tr(h,b)$ for sets of (non-isomorphic) trees? }}
\begin{theorem}
$\J$ is in $\PSPACE$.
\end{theorem}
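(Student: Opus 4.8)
The plan is to exhibit a single deterministic procedure that, on input $\psi\in\ML(\omega)$, decides in polynomial space whether $\psi$ is satisfiable in $\clJ$; since $\PSPACE$ is closed under complement and $\vf\in\J$ iff $\neg\vf$ is not satisfiable in $\clJ$, membership $\vf\in\J$ then follows by applying this to $\neg\vf$ and complementing. The three ingredients are: conservativity (Lemma \ref{lem:JapConserv}), which lets me pass from an arbitrary $\omega$-formula to one using only finitely many, consecutively numbered modalities; the reduction of $\clJ$-satisfiability to satisfiability in sums over small trees (Lemma \ref{lem:jap-treesums1}); and the algorithm $\CondSatSumFa$ together with Theorem \ref{Thm:CorrectAlgTree}, which handles one layer of summation at a time. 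The point requiring care -- and the reason the corollary stating that each finite-modality fragment of $\J$ lies in $\PSPACE$ does not already settle the matter -- is that the number of modalities, hence the number of nested applications of $\CondSatSumFa$, grows with the input; the hard part will be checking that this nesting stays within polynomial space, which works precisely because $\Nmod{\psi}$ is bounded by the formula length.

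First I would compute $\hat\psi$, replacing the modalities actually occurring in $\psi$ by the consecutive indices $0,\dots,N-1$, where $N=\Nmod{\psi}$; this is a polynomial-time step, and by (\ref{eq:conserv-hat}) one has $N<n$, where $n=\#\hat\psi$. Since $\hat{(\cdot)}$ only renames modality indices and hence commutes with Boolean connectives, $\widehat{\neg\psi}=\neg\hat\psi$; combining this with Lemma \ref{lem:JapConserv} yields that $\psi$ is satisfiable in $\clJ$ iff $\hat\psi$ is. As $\hat\psi$ is an $N$-formula, Lemma \ref{lem:jap-treesums1} reduces this to the satisfiability of $\hat\psi$ in $\LSuma{(0,\dots,N-1)}{\Tr(n,n)}{\{\frS_N\}}$, and by Proposition \ref{prop:sat_via_cSat-vect} to the existence of a vector $\vect\in 2^{n}$ with $\vect(0)=1$ such that the tie $(\hat\psi,\vect,\mathbf{0})$, with $\mathbf{0}$ the all-empty condition, is satisfiable in $\LSuma{(0,\dots,N-1)}{\Tr(n,n)}{\{\frS_N\}}$.

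The core is then a conditional-satisfiability procedure for this iterated sum. Setting $\clF_N=\{\frS_N\}$ and $\clF_k=\LSuma{k}{\Tr(n,n)}{\clF_{k+1}}$ for $k<N$, the target class is $\clF_0$. By Theorem \ref{Thm:CorrectAlgTree}, for each $k$ the procedure $\CondSatSumFa$ with modality index $k$ and summand class $\clF_{k+1}$ decides whether a tie is satisfiable in $\clF_k$, provided its queries ``is this tie satisfiable in $\clF_{k+1}$?'' are answered by an oracle for $\CSat{\clF_{k+1}}$. I would implement these oracle calls recursively: a query at level $k$ is resolved by running the level-$(k+1)$ procedure, and at the bottom the queries to $\CSat{\{\frS_N\}}$ are answered directly. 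The base case is easy: since $\frS_N$ is a single point with all relations empty, $\cmo{\mM}{0}{\oGamma}{\Di_a\chi}$ holds iff $\chi\in\oGamma(a)$, so the valuation witnessing a tie $(\hat\psi,\vect,\vectU)$ is forced by $\vect$ and can be verified in polynomial time. Correctness of the whole nested procedure follows by downward induction on $k$ from Theorem \ref{Thm:CorrectAlgTree}.

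It remains to bound the space. A single activation frame of $\CondSatSumFa$ stores the loop data $\vectu,\vect_0,\dots,\vect_{k-1}$ and the running condition, i.e. $\BigO(n^{2})$ bits (at most $n+1$ vectors of length $n$, and a condition with $N\le n$ components of length $n$). Within one level the tree-recursion decreases the height parameter at each step, so its stack is at most $n$ frames deep; across the $N\le n$ levels the simultaneously suspended frames form, for each level, one such path, giving at most $N\cdot n\le n^{2}$ frames on the combined call stack at any instant. Hence the whole computation runs in $\BigO(n^{2}\cdot n^{2})=\BigO(n^{4})$ space, which is polynomial in the size of $\psi$. The essential obstacle is exactly this interaction between the $N$-fold nesting and the per-level recursion; the bound $N<n$ from (\ref{eq:conserv-hat}) is what keeps it polynomial, and this is the improvement over merely knowing each finite fragment to be in $\PSPACE$. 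Therefore $\J\in\PSPACE$.
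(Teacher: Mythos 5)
Your proposal is correct and follows essentially the same route as the paper's own proof: pass to $\hat{\vf}$ via the conservativity lemma, reduce to iterated sums over small trees, and resolve the oracle calls of $\CondSatSumFa$ at each modality level by recursively invoking the procedure for the next level, with the combined recursion depth bounded by $O(\lvf^2)$ and hence the same $O(\lvf^4)$ space bound. Your classes $\clF_k$ are the paper's $\clS(h,b,a,\Al)$ in slightly different (and arguably cleaner) packaging, with the height parameter hidden inside $\CondSatSumFa$ rather than made explicit in the class definition.
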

\begin{proof}
We describe a decision procedure for the conditional satisfiability on
iterated sums of trees.
For positive $h,b$ and $a\leq\Al<\omega$, we define sums $\clS(h,b,a,\Al)$ as follows:
if $a=\Al$, let $\clS(h,b,a,\Al)$ denote $\{\frS_\Al\}$ for all $h,b$;
if $a<\Al$, let $\clS(h,b,a,\Al)$ denote
$\LSuma{a}
{\Tr{(h,b)}}
{
\LSuma{(a+1,\ldots,\Al-1)}{\Tr(\Al,\Al)}
{
\{\frS_\Al\}
}
}
$.
In particular, $\clS(h,b,\Al-1,\Al)$ is $\LSuma{\Al-1}{\Tr(h,b)}{\{\frS_\Al\}}$, and
$\clS(\AlA,\AlA,0,\Al)=\LSuma{(0,\ldots,\Al-1)}{\Tr(\Al,\Al)}{\{\frS_\Al\}}$.
Using Theorem \ref{Thm:CorrectAlgTree}, we describe the procedure  $\CSatJ$ (Algorithm \ref{alg:SatJ}) that decides
whether a given
$\Al$-tie $(\vf,\vect,\vectU)$ is satisfiable  $\clS(h,b,a,\Al)$:
\begin{lemma}
Let $a\leq \Al<\omega$,
 $(\vf,\vect,\vectU)$ an $\Al$-tie, and $0<h,b<\omega$. Then
$$
(\vf,\vect,\vectU) \text{ is satisfiable in }
\clS(h,b,a,\Al) \tiff  \CSatJ(\vf,\vect,\vectU,h,b,a,\Al)
\text{ returns }\true.
$$
\end{lemma}
\begin{proof}
By induction on $\Al-a$. The case $\Al=a$ is trivial: $\clS(h,b,\Al,\Al)$ consists of a single singleton
$\frS_\Al$. If $a<\Al$, then
$\clS(h,b,a,\Al)$ is
$\LSuma{a}
{\Tr{(h,b)}}
{
\clS(\Al,\Al,a+1,\Al)
}
$. By induction hypothesis,  $\CSatJ(\tau,\Al,\Al,a+1,\Al)$ decides whether a tie $\tau$
is satisfiable in
$\clS(\Al,\Al,a+1,\Al)$.
Observe that for $a<A$, the algorithm $\CSatJ$ is an instance of
the algorithm $\CondSatSumFa$ (Algorithm \ref{alg:SatSumFa-poly} in
Section \ref{sec:compl}), where  $\CSatJ(\tau,\Al,\Al,a+1,\Al)$ is the oracle for the summands
$\LSuma{(a+1,\ldots,\Al-1)}{\Tr(\Al,\Al)}
{
\{\frS_\Al\}
}$.  Now the induction step follows from Theorem \ref{Thm:CorrectAlgTree}.
\end{proof}

\begin{lemma}\label{lem:jap-treesums2}
$\vf$ is satisfiable in $\J$ iff
$\hat{\vf}$ is satisfiable in $\clS(\lvf,\lvf,0,\lvf)$.
\end{lemma}
\begin{proof}
  We have from Lemmas \ref{lem:JapConserv} and  \ref{lem:jap-treesums1}:
$\vf$ is satisfiable in $\J$ iff $\hat{\vf}$ is satisfiable $\J$ iff $\hat{\vf}$ is satisfiable
in $\LSuma{(0,\ldots,\Al-1)}{\Tr(\Al,\Al)}{\{\frS_\Al\}}$, where $\AlA$ can be assumed equal to
$\lvf$ by (\ref{eq:conserv-hat}). The latter class is
$\clS(\AlA,\AlA,0,\Al)$.
\end{proof}

Hence, by Proposition \ref{prop:sat_via_cSat-vect}, we obtain
$$
\vf \text{ is satisfiable in } \J  \tiff  \EE \vect\in2^\lvf\, (\vect(0)=1\, \&\,  (\hat{\vf},\vect,{\mathbf 0})\in
\CSat{\clS(\lvf,\lvf,0,\lvf)}),
$$
where ${\mathbf 0}$\improve{is it clear?} represents the empty condition.

Let us estimate the amount of space used by $\CSatJ$
on the input
$(\hat{\vf},\vect,{\mathbf 0}, \lvf,\lvf,0,\lvf)$.
On each recursive call, either
the argument $a$ increases by 1, or $a$ does not change and
the parameter $h$ decreases by 1. Since $1\leq h \leq \lvf$ and $0 \leq a\leq\lvf$, we obtain that
the depth of recursion is bounded by $\lvf\cdot(\lvf+1)$\todo{DC}.
For $n=\lvf$, at each call $\CSatJ$ needs $\BigO(n^2)$ space
to store new variables.
Hence, to check the satisfiability of $\vf$ in $\clJ$ we need $\BigO(n^4)$ space.
This completes the proof of the theorem.
\improve{a few words on sat in singleton?
Algorithm \ref{alg:SatSumFa-poly} uses a ``trivial'' oracle for $\CSat{\frS_\Al}$
}
\begin{algorithm}[t]
\SetAlgoNoLine
\caption{Decision procedure for $\CSat{\clS(h,b,a,\Al)}$}
\label{alg:SatJ}

\SetKwFunction{mCsat}{$\CSatJ$}
\mCsat{$\vf,\vect,\vectU,h,b,a,\Al$}:${boolean}$
\\ \Indp
\KwIn{An $\Al$-tie $(\vf,\vect,\vectU)$; positive integers $h,b$; $a\leq \Al<\omega$.}
\lIf*{$a=\Al$} {\Return $((\vf,\vect,\vectU)$ is satisfiable in $\frS_\Al$)};
\\
\lIf*{$\CSatJ(\vf,\vect,\vectU,\Al,\Al,a+1,\Al)$} {
\Return ${\true}$\;
}

{
\lIf*{$h>1$}
{
\\ \Indp
\lFor*{$k$ such that $1\leq k\leq b$}
{
\lFor*{$\vectu,\vect_0,\dts,\vect_{k-1}\in 2^{\lvf}$ such that $\vect=\vectu+ \sum_{i<k} \vect_{i}$}
{
\\ \Indp
\lIf*{
$\CSatJ(\vf,\vectu, \vectU\plusa  \sum_{i<k} \vect_{i},\Al,\Al,a+1,\Al)$}
{
\\ \Indp
\lIf*{$\bigwedge\limits_{i<k}\CSatJ(\vf,\vect_i,\vectU,h-1,b,a,\Al)$}
{
\Return ${\true}$\;
}
}
}
}
}
}
\Return ${\false}$.
\end{algorithm}
\end{proof}

\begin{corollary}
Japaridze's polymodal provability logic $\GLP$ is decidable in $\PSPACE$.
\end{corollary}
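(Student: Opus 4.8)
The plan is to derive the corollary by combining the theorem just proven with the reduction of $\GLP$ to $\J$ recalled at the start of Section \ref{subs:Japar}; there are only two ingredients. First, by \cite{Bekl-Jap} there is a polynomial-time computable translation $f:\ML(\omega)\to\ML(\omega)$ with $\vf\in\GLP \tiff f(\vf)\in\J$; in the notation of Section \ref{Sec:SatviaCSat} this says exactly $\GLP \RedKarp \J$. Second, the preceding theorem gives $\J\in\PSPACE$.

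I would then invoke the standard fact that $\PSPACE$ is closed under polynomial-time many-to-one reductions: from $\GLP \RedKarp \J$ and $\J\in\PSPACE$ we conclude $\GLP\in\PSPACE$. Spelling this out, on input $\vf$ one computes $f(\vf)$ in time (hence space) polynomial in $|\vf|$; in particular $|f(\vf)|$ is bounded by a polynomial $p(|\vf|)$. Feeding $f(\vf)$ to the $\PSPACE$ decision procedure for $\J$ then uses space polynomial in $|f(\vf)|\le p(|\vf|)$, hence polynomial in $|\vf|$, and correctly decides membership of $\vf$ in $\GLP$.

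There is no genuine obstacle at this stage: all the difficulty has already been absorbed into establishing $\J\in\PSPACE$, which itself rested on Theorem \ref{Thm:CorrectAlgTree}, the finite-signature conservativity of Lemma \ref{lem:JapConserv}, and the reduction of $\J$ to iterated sums of finite trees. The only point deserving a sentence of care is the polynomiality bookkeeping in the composition, namely that the output size $|f(\vf)|$ of Beklemishev's translation is polynomially bounded so that the space used by the $\J$-procedure stays polynomial in the original input; this is immediate from $f$ being polynomial-time computable.
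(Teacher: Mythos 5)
Your proposal is correct and is exactly the argument the paper intends: the corollary follows from the theorem that $\J$ is in $\PSPACE$ together with Beklemishev's polynomial-time many-to-one reduction $\vf\in\GLP \tiff f(\vf)\in\J$ recalled at the start of Section \ref{subs:Japar}, using closure of $\PSPACE$ under such reductions. Nothing further is needed.
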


\begin{remark}
It is immediate that $\GLP$ and $\J$ are $\PSPACE$-hard (e.g., it follows from the fact that
the 1-modal fragments of these logics are the logic $\GL$).  \improve{ref}
From \cite{ChagrovRybakovVarHard2003}, it follows that one-variable fragment of $\GLP$ is $\PSPACE$-hard.
In \cite{PakhHard14}, it was shown that even the constant (closed) fragment of $\GLP$ is $\PSPACE$-hard.
\end{remark}

\subsection{Refinements and lexicographic products of modal logics}\label{sub:refandlexProduct}
In this paragraph we discuss algorithmic properties of logics obtained via the {\em refinement} and the {\em lexicographic product} operations.

The operation of {\em refinement of modal logics} was introduced in \cite{BabRyb2010}.
\begin{definition}\label{psp:def:refinm}
Let $\frF=(W,R)$ be a preorder and
$\clR{F}=(\ov{W},\leq)$ its skeleton.
Consider a family $(\frF_C)_{C\in \ov{W}}$ of $\Al$-frames such that
$\dom{\frF_C}=C$ for all $C\in \ov{W}$.
The  {\em refinement} of $\frF$ {\em by}
$(\frF_C)_{C\in \ov{W}}$ is the  $(1+\AlA)$-frame
$(W,R,(R^\rfn_a)_{a\in \Al})$, where
\begin{eqnarray}\label{psp:eq:def:refinm}
&R^\rfn_a \;\subseteq   \;\bigcup\nolimits_{C\in \ov{W}} C\times C &\text{ for all } a\in \Al,\\
&(W,(R^\rfn_a)_{a\in \Al}) \restr C  \;= \; \frF_C &\text{ for all } C\in \ov{W}.
\end{eqnarray}
For a class $\clI$ of preorders and a class $\clG$ of $\AlA$-frames let $\Refin{\clI}{\clF}$ be the class of all refinements of frames from $\clI$ by frames in $\clF$.
For logics $\vL_1\supseteq \LS{4},\vL_2$, we put $\Refin{\vL_1}{\vL_2}= \Log{\Refin{\Frames{\vL_1}}{\Frames{\vL_2}}}$.
\end{definition}

In \cite{BabRyb2010} it was shown that in many cases the refinement operation preserves the finite model property and decidability.
\hide{In particular, it was shown that
\begin{theorem}\cite{BabRyb2010} \label{thm:refS4S4Fmp}
$\Refin{\LS{4}}{\LS{4}}$ is decidable and
$\Refin{\LS{4}}{\LS{4}}=\Log{\Refin{\QOf}{\QOf}}$, where $\QOf$ is the class of all finite non-empty preorders.
\end{theorem}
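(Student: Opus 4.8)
The plan is to recognise a refinement as a $0$-sum over a skeleton and then peel off the two finiteness requirements separately, handling the summands by the interchangeability theorem and the index by the finite model property of $\LS{4}$. Given a preorder $\frF=(W,R)$ with skeleton $\clR{\frF}=(\ov{W},\leq)$ and a refining family $(\frF_C)_{C\in\ov{W}}$, inside a cluster $C$ the relation $R$ is the full $C\times C$ while the relations $R^\rfn_a$ are exactly those of $\frF_C$, and across distinct clusters only the $0$-th relation is present and follows $\leq$; comparing with Definition \ref{def:sum-poly-extra} this says precisely that the refinement equals $\LSuma{0}{\clR{\frF}}{\univ{\frF_C}}$ (by Remark \ref{rem:independOfRefl} we may read $\leq$ as its strict part). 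Writing $\mathcal{P}$ for the class of strict partial orders and $\mathcal{P}_f$ for the finite ones, and noting that strict skeletons of preorders range over $\mathcal{P}$ while those of finite preorders range over $\mathcal{P}_f$, we obtain $\Refin{\Frames{\LS{4}}}{\Frames{\LS{4}}}=\LSuma{0}{\mathcal{P}}{\univ{\Frames{\LS{4}}}}$ and $\Refin{\QOf}{\QOf}=\LSuma{0}{\mathcal{P}_f}{\univ{\QOf}}$.

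Next I would reduce the summands. Since $\univL{\LS{4}}$ has the finite model property (Example \ref{ex:S4andAround}) and a second universal modality is redundant, the summand classes $\univ{\Frames{\LS{4}}}$ and $\univ{\QOf}$ have the same universal-modality logic, $\Log{\univ{(\univ{\Frames{\LS{4}}})}}=\Log{\univ{(\univ{\QOf})}}$. Theorem \ref{thm:interch1}, applied to the $0$-sum, then yields $\Log{\LSuma{0}{\clI}{\univ{\Frames{\LS{4}}}}}=\Log{\LSuma{0}{\clI}{\univ{\QOf}}}$ for every index class $\clI$; taking $\clI=\mathcal{P}$ gives $\Refin{\LS{4}}{\LS{4}}=\Log{\LSuma{0}{\mathcal{P}}{\univ{\QOf}}}$.

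The main obstacle is the index reduction $\Log{\LSuma{0}{\mathcal{P}}{\univ{\QOf}}}=\Log{\LSuma{0}{\mathcal{P}_f}{\univ{\QOf}}}$, i.e. that arbitrary partial-order indices may be replaced by finite ones: the inclusion $\mathcal{P}_f\se\mathcal{P}$ gives one direction for free, and the other is the finite model property of the whole refinement logic. Here I would filtrate a model through $\SF{\vf}$. The $0$-th relation $R_0$ is an $\LS{4}$-preorder, and the remaining relation $R_1$, being a disjoint union of cluster-preorders with $R_1\se R_0\cap R_0^{-1}$, is itself reflexive and transitive; so a transitive filtration of the two $\LS{4}$-modalities, performed compatibly so that $\overline{R_1}$ stays inside the $\overline{R_0}$-clusters, collapses the model to a finite frame that is again a refinement of a finite preorder by finite preorders, hence a member of $\Refin{\QOf}{\QOf}$, while preserving the truth of $\vf$. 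Checking that this filtration lands in $\Refin{\QOf}{\QOf}$ is where the real work sits; it is the sum-theoretic shadow of the finite model property of $\LS{4}$ at the index level. Once the index is finite it is Noetherian and $\mathcal{P}_f$ contains all finite trees, so Theorem \ref{thm:smll-tree-for-Csat} identifies $\Log{\LSuma{0}{\mathcal{P}_f}{\univ{\QOf}}}$ with the logics over $\NPO$ and over $\Trf$; chaining the three steps gives $\Refin{\LS{4}}{\LS{4}}=\Log{\Refin{\QOf}{\QOf}}$.

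Finally, decidability is read off the sum form. As $\Refin{\QOf}{\QOf}=\LSuma{0}{\mathcal{P}_f}{\univ{\QOf}}$ and $\mathcal{P}_f$ is a class of Noetherian orders containing all finite trees, Theorem \ref{thm:main_compl_res}(\ref{thmItem:main_compl_res-1}) gives $\Sat{\Refin{\QOf}{\QOf}}\RedPSP\Sat{\univ{(\univ{\QOf})}}$, and the latter, being the satisfiability problem of $\univL{\LS{4}}$ over finite preorders (with one redundant universal modality), lies in $\PSPACE$. Hence $\Refin{\LS{4}}{\LS{4}}$ is decidable, in fact $\PSPACE$-complete.
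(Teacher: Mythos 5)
The paper itself offers no proof of this statement: it is imported verbatim from \cite{BabRyb2010} (the theorem environment is even wrapped in the source's \verb|\hide| macro), and the sum-based alternative that you pursue is only alluded to in the sentence following Proposition \ref{psp:prop:ref-via-sums} and deferred to \cite[Section 5.2]{AiML2018-sums}. So your proposal is not competing with an in-paper argument; it is an attempt to actually carry out the route the paper gestures at. Your skeleton is the right one: the identification $\Refin{\frF}{(\frF_C)}\isom\LSuma{0}{C\in\clR{\frF}}{\univ{\frF_C}}$ is exactly Proposition \ref{psp:prop:ref-via-sums}, the summand replacement is a legitimate use of Theorem \ref{thm:interch1}, and the final decidability/complexity step via Theorem \ref{thm:main_compl_res}(\ref{thmItem:main_compl_res-1}) is correct once the equality of logics is in hand.

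There are, however, two gaps, one of them at the mathematical centre of the theorem. First, the index reduction $\Log{\LSuma{0}{\mathcal{P}}{\univ{\QOf}}}=\Log{\LSuma{0}{\mathcal{P}_f}{\univ{\QOf}}}$ cannot be obtained from any transfer theorem in the paper, because arbitrary partial orders (e.g.\ $(\omega,<)$) are not Noetherian, so Theorems \ref{thm:smll-tree-for-Csat} and \ref{thm:main_compl_res} simply do not apply to the index class $\mathcal{P}$. You correctly locate the missing ingredient in a filtration of the full two-relation structure, but you leave it as a sketch and explicitly flag it as ``where the real work sits'' --- which means the proposal reduces the theorem to precisely the unproved claim that \cite{BabRyb2010} supplies. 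The sketch is fillable: taking Lemmon (transitive-closure) filtrations $\overline{R}$ and $\overline{R^\rfn}$ of both preorders, monotonicity of transitive closure together with $R^\rfn\se R\cap R^{-1}$ yields $\overline{R^\rfn}\se\overline{R}\cap\overline{R}^{-1}$, so the quotient is again a refinement of a finite preorder by finite preorders; but the filtration conditions for both modalities must actually be verified, and none of that is on the page. Second, a minor point: you justify $\Log{\univ{(\univ{\Frames{\LS{4}}})}}=\Log{\univ{(\univ{\QOf})}}$ by the finite model property of $\univL{\LS{4}}$ ``(Example \ref{ex:S4andAround})'', but that example only establishes $\PSPACE$ bounds, not the finite model property of $\LS{4}$ with the universal modality; that fact is standard but needs its own citation or filtration argument. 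Note also that once you filtrate the whole structure in the index step, the clusters become finite as a by-product, so the separate interchangeability step is redundant --- a single filtration argument would both shorten the proof and remove the dependence on the unproved FMP of $\univL{\LS{4}}$.
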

}

Refinements can be considered as sums according to the following fact:
\begin{proposition}\label{psp:prop:ref-via-sums}
The refinement of $\frF$ by
frames $(\frF_C)_{C\in \clR{\frF}}$ is isomorphic to the sum
$\LSuma{0}{C \in \clR{\frF}}{\univ{\frF_C}}.$
\end{proposition}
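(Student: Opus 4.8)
The plan is to exhibit the obvious bijection between the two frames and check that it carries each of the $1+\Al$ relations to the correct one, reading off the relations of the sum directly from Definitions \ref{def:asum} and \ref{def:sum-poly-extra}. Write $\frF_C=(C,(R_{C,a})_{a\in\Al})$ for each cluster $C\in\clR{\frF}$, so that $\univ{\frF_C}$ is the $(1+\Al)$-frame whose $0$-th relation is the universal relation $C\times C$ and whose $(1+a)$-th relation is $R_{C,a}$. For $w\in W$ let $C_w$ denote the cluster of $w$, i.e.\ its $(R\cap R^{-1})$-class. Since every $w\in W$ lies in exactly one cluster, the map $\phi\colon w\mapsto (C_w,w)$ is a bijection from $W$ onto $\bigsqcup_{C\in\clR{\frF}} C$, the domain of $\LSuma{0}{C\in\clR{\frF}}{\univ{\frF_C}}$.

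First I would unwind the relations of the sum. By Definition \ref{def:asum} the index frame in coordinate $0$ is $\clR{\frF}=(\ov{W},\leq)$ and is empty in all other coordinates. Hence in $\LSuma{0}{C\in\clR{\frF}}{\univ{\frF_C}}$ the $0$-th relation connects $(C,w)$ to $(D,v)$ iff $C=D$ (via the universal relation $C\times C$ inside the summand) or $C\neq D$ and $C\leq D$ (via the index frame); by reflexivity of $\leq$ this is simply $C\leq D$. For $a\in\Al$ the $(1+a)$-th relation connects $(C,w)$ to $(D,v)$ iff $C=D$ and $w\,R_{C,a}\,v$, the cross-cluster contribution being void because the index frame is empty in these coordinates.

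The key step is the $0$-th coordinate, where I claim that for all $w,v\in W$,
$$w\,R\,v \quad\tiff\quad C_w\leq C_v.$$
The forward direction is immediate from the definition of the skeleton order, witnessed by $w,v$ themselves. For the converse, if $C_w\leq C_v$ then there are $w'\in C_w$ and $v'\in C_v$ with $w'\,R\,v'$; since $w,w'$ (resp.\ $v',v$) lie in one cluster we have $w\,R\,w'$ and $v'\,R\,v$, and transitivity of the preorder $R$ gives $w\,R\,v$. Thus $\phi$ respects the $0$-th relation. For the remaining coordinates, the defining clauses \eqref{psp:eq:def:refinm} of the refinement state exactly that each $R^\rfn_a$ is contained in $\bigcup_C C\times C$ and restricts to $R_{C,a}$ on each $C$; hence $w\,R^\rfn_a\,v$ holds iff $w,v$ lie in a common cluster $C$ with $w\,R_{C,a}\,v$, which is precisely the condition for $\phi(w)$ and $\phi(v)$ to be joined by the $(1+a)$-th relation of the sum. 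Therefore $\phi$ is an isomorphism.

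I expect the only delicate point to be bookkeeping rather than mathematics: one must align the index shift introduced by $\univ{(\cdot)}$ (the universal modality occupies coordinate $0$ and pushes the original modalities of $\frF_C$ to coordinates $1,\ldots,\Al$) with the coordinate $0$ used by the $0$-sum, and one must see that the reflexivity and transitivity of the preorder are exactly what make the skeleton order on clusters coincide with $R$ under $\phi$.
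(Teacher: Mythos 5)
Your proof is correct and takes exactly the paper's route: the paper's entire proof is the one-line observation that the isomorphism is $w\mapsto (C,w)$ for $w\in C$, and you supply that same map together with the routine verifications (that the $0$-th relation of the sum collapses to $C_w\leq C_v$, which matches $R$ by transitivity and reflexivity of the preorder, and that the clauses defining the refinement force the remaining coordinates to agree). Nothing to change.
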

\begin{proof}
The isomorphism is given by $w\mapsto (C,w)$, where  $w\in C$.
\end{proof}
In view
of Theorems \ref{thm:interch1} and \ref{thm:smll-tree-for-Csat}, this observation provides another way to prove the finite model property of refinements (see \cite[Section 5.2]{AiML2018-sums} for more details). Moreover, representation of refinements as sums allows to obtain complexity results according to our theorems in Section \ref{sec:compl}.

Let us illustrate this with the logic $\Refin{\LS{4}}{\LS{4}}$.
In \cite{BabRyb2010}, it was proven that
$\Refin{\LS{4}}{\LS{4}}$ is decidable and that
$\Refin{\LS{4}}{\LS{4}}=\Log{\Refin{\QOf}{\QOf}}$, where $\QOf$ is the class of all finite non-empty preorders.
The latter fact in combination with above proposition yields
$$\Refin{\LS{4}}{\LS{4}}=\Log\LSuma{0}{\Trf}{\univ{\QOf}}.$$
Since the satisfiability problem for the class $\univ{\QOf}$ is in $\PSPACE$ (\cite{SpaanUniv}; see also Example \ref{ex:S4andAround}),
from Theorem \ref{thm:main_compl_res} (and Corollary \ref{cor:Hard}) we obtain
\begin{corollary}
$\Refin{\LS{4}}{\LS{4}}$ is $\PSPACE$-complete.
\end{corollary}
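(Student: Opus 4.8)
The plan is to prove $\PSPACE$-completeness by establishing the two matching bounds separately, each time reducing to a result already proved in this section through the representation $\Refin{\LS{4}}{\LS{4}}=\Log\LSuma{0}{\Trf}{\univ{\QOf}}$ derived just above. Since for any class $\clC$ of frames we have $\Sat{\clC}\in\PSPACE$ iff $\Log\clC\in\PSPACE$, it is enough to pin down the complexity of $\Sat{\LSuma{0}{\Trf}{\univ{\QOf}}}$.

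For the upper bound I would apply Theorem \ref{thm:main_compl_res}(\ref{thmItem:main_compl_res-1}) with $a=0$, $\clF=\univ{\QOf}$ (a class of $2$-frames, so that the hypothesis $a<\AlM<\omega$ reads $0<2<\omega$), and $\clI=\Trf$. First I would check the remaining hypothesis: $\Trf$ is a class of Noetherian orders (finite strict partial orders have no infinite ascending chains) that trivially contains all finite trees. The theorem then yields $\Sat{\LSuma{0}{\Trf}{\univ{\QOf}}}\RedPSP\Sat{\univ{(\univ{\QOf})}}$. It remains to see the oracle is cheap: $\Sat{\univ{\QOf}}\in\PSPACE$ is already recorded (Example \ref{ex:S4andAround}, after \cite{SpaanUniv}), and the second universal modality is harmless, since the frames of $\univ{(\univ{\QOf})}$ carry two copies of the universal relation, so replacing every $\Di_1$ by $\Di_0$ and relabelling $\Di_2$ gives a polynomial-time many-one reduction $\Sat{\univ{(\univ{\QOf})}}\RedKarp\Sat{\univ{\QOf}}$. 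Hence the oracle lies in $\PSPACE$, and as $\PSPACE$ is closed under $\RedPSP$, so is $\Sat{\LSuma{0}{\Trf}{\univ{\QOf}}}$, giving $\Refin{\LS{4}}{\LS{4}}\in\PSPACE$.

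For the lower bound I would invoke Corollary \ref{cor:Hard}, writing $\LSuma{0}{\Trf}{\univ{\QOf}}=\LSum{\clI'}{\univ{\QOf}}$, where $\clI'$ is $\Trf$ viewed as $2$-frames whose $0$-th relation is the tree order and whose $1$-st relation is empty. The summand class $\clF=\univ{\QOf}$ is non-empty, so the only thing to verify is that $\SubFrs{\clI'}$ is thick. Taking the distinguished index $a=0$, the reducts $\reduct{\frG}{\{0\}}$ of frames $\frG\in\clI'$ are exactly the finite trees in $\Trf$; and $\Trf$ is thick because for every finite transitive reflexive tree $(T,\leq)$ of branching $\leq 2$ its strict part $(T,{<})$ lies in $\Trf$ and satisfies ${<}^{*}={\leq}$ (Definition \ref{def:thick-uni}). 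Since $\clI'\subseteq\SubFrs{\clI'}$, the larger class is thick as well, and Corollary \ref{cor:Hard} delivers $\PSPACE$-hardness; combined with the upper bound this gives completeness.

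I expect no genuine obstacle here: the two deep ingredients, namely the polynomial-space Turing reduction of sums to summands and the extraction of Ladner-style hardness from thick index classes, are already established, so the task reduces to checking that their hypotheses are met. The only two points needing a little care are the collapse of the redundant universal modality in the oracle (so that the cited $\PSPACE$ bound for $\univ{\QOf}$ genuinely applies to $\univ{(\univ{\QOf})}$) and the thickness verification for $\clI'$ via the strict-part trick; both are routine.
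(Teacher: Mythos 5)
Your proposal is correct and follows essentially the same route as the paper, which derives the corollary from the representation $\Refin{\LS{4}}{\LS{4}}=\Log\LSuma{0}{\Trf}{\univ{\QOf}}$ by citing Theorem \ref{thm:main_compl_res} for the upper bound and Corollary \ref{cor:Hard} for hardness. You additionally spell out two details the paper leaves implicit --- collapsing the duplicated universal modality so that the oracle $\Sat{\univ{(\univ{\QOf})}}$ really reduces to $\Sat{\univ{\QOf}}$ (equivalently, one could invoke Corollary \ref{thm:main_compl_resConical} since $\univ{\QOf}$ is preconical), and the thickness check for the index class --- both of which are handled correctly.
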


\medskip

A related operation is the {\em lexicographic product of modal logics}
introduced in \cite{Balb2009} by Ph. Balbiani.
\begin{definition}
Consider frames $\frI=(I,S)$ and $\frF=(W,(R_a)_{a\in \Al})$.
Their {\em lexicographic product} $\frI\ltms \frF$ is the $(1+\Al)$-frame
$(I\times W, S^\ltms, (R^\ltms_a)_{a\in \Al})$, where
\begin{eqnarray*}
(i,w) S^\ltms  (j,u) &\quad\tiff\quad&   i S j, \\
(i,w) R^\ltms_a  (j,u) &\quad\tiff\quad&   i = j \;\&\;w R_a  u.
\end{eqnarray*}
In other words, $\frI\ltms \frF$ is the lexicographic sum $\LSuml{\frI}{\frF_i}$, where $\frF_i=\frF$ for all $i$ in $\frI$.

For a class $\clI$ of $1$-frames and a class $\clG$ of $\Al$-frames, the class $\clI\ltms\clF$ is the class of all
products $\frI\ltms \frF$ such that $\frI\in \clI$
and $\frF\in \clF$. For logics $\vL_1,\vL_2$, we put $\vL_1\ltms \vL_2= \Log{(\Frames{\vL_1}\ltms \Frames{\vL_2})}$.
\end{definition}

As examples, consider the logics $\LS{4}\ltms \LS{4}$ and $\GL\ltms \LS{4}$ and show that they are in $\PSPACE$ ($\PSPACE$-complete).
In \cite[Theorem 5.13]{AiML2018-sums}, it was shown that $\LS{4}\ltms \LS{4}$ is equal to $\Refin{\LS{4}}{\LS{4}}$.
Also, it was shown that
$$\GL\ltms \LS{4}=\Log{(\Trf\ltms \QOf)}=\Log\LSuma{0}{\Trf}{\LSuma{1}{\Trf}{\clC}}$$
where $\clC$ is the class of finite frames of form $(C,\emp,C\times C)$
(in terms of lexicographic sums,
$\LSuma{0}{\Trf}{\LSuma{1}{\Trf}{\clC}}$ is the class $\LSuml{\Trf}{\QOf}$ \todo{DC!}); see \cite[Theorem 5.14]{AiML2018-sums} for the proof.
Hence, this logic is in $\PSPACE$ by Corollary \ref{cor:iteratedPSpace}.
\begin{corollary}
The lexicographic products  $\LS{4}\ltms \LS{4}$, $\GL\ltms \LS{4}$ are $\PSPACE$-complete.
\end{corollary}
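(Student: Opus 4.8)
The plan is to prove membership in $\PSPACE$ and $\PSPACE$-hardness separately for each of the two logics, reusing the sum representations recorded just above. For $\LS{4}\ltms\LS{4}$ there is nothing to do beyond citing \cite[Theorem~5.13]{AiML2018-sums}, which identifies it with $\Refin{\LS{4}}{\LS{4}}$: the corollary immediately preceding this one already states that $\Refin{\LS{4}}{\LS{4}}$ is $\PSPACE$-complete, so both bounds transfer verbatim.

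For $\GL\ltms\LS{4}$ the upper bound comes from the representation $\Log(\Trf\ltms\QOf)=\Log\LSuma{(0,1)}{\Trf}{\clC}$, where $\clC$ is the class of frames $(C,\emp,C\times C)$. Since $\Trf$ is not closed under finite disjoint unions, I would first pass to $\NPO$: by Corollary~\ref{psp:thm:itersums-trees} the iterated tree-sums and the corresponding $\NPO$-sums have the same logic, so $\Log\LSuma{(0,1)}{\Trf}{\clC}=\Log\LSuma{(0,1)}{\NPO}{\clC}$. The summands in $\univ{\clC}$ are essentially clusters, so $\Sat{\univ{\clC}}$ is in $\NP$ and a fortiori in $\PSPACE$; Corollary~\ref{cor:iteratedPSpace} then gives $\Sat{\univ{(\LSuma{(0,1)}{\NPO}{\clC})}}\RedPSP\Sat{\univ{\clC}}$, whence $\GL\ltms\LS{4}\in\PSPACE$.

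The substantive step, and the one requiring care, is $\PSPACE$-hardness of $\GL\ltms\LS{4}$, which I would obtain from Corollary~\ref{cor:Hard}. The key is that $\Trf\ltms\QOf$ is thick in the sense of Definition~\ref{def:thick-uni}: taking the summand $\frF\in\QOf$ to be a singleton, the definition of $\ltms$ gives $\reduct{(\frT\ltms\frF)}{\{0\}}\isom\frT$, so letting $\frT$ range over $\Trf$ realizes every finite irreflexive transitive tree as a $\{0\}$-reduct---precisely the thick class that witnesses $\PSPACE$-hardness of $\GL$. Rewriting $\LSuml{\Trf}{\QOf}=\LSuma{0}{\Trf}{\empL{\QOf}}$ via Proposition~\ref{prop:lex-asums2}(1) exhibits a sum whose indexing class has thick subframe closure, so Corollary~\ref{cor:Hard} yields $\PSPACE$-hardness of $\Sat{\LSuml{\Trf}{\QOf}}$. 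The only delicate point is descending from $\LSuml{\Trf}{\QOf}$ to its constant-family subclass $\Trf\ltms\QOf$: this is harmless because Ladner's countermodels are built with singleton summands under a constant family and hence already lie in $\Trf\ltms\QOf$, while satisfiability in $\Trf\ltms\QOf$ forces validity of the encoded quantified Boolean formula by (\ref{eq:Ladn2}), which holds over arbitrary Kripke frames. Combining the bounds, both $\LS{4}\ltms\LS{4}$ and $\GL\ltms\LS{4}$ are $\PSPACE$-complete.
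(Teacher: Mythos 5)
Your proof is correct and follows essentially the same route as the paper: the identification of $\LS{4}\ltms\LS{4}$ with $\Refin{\LS{4}}{\LS{4}}$ via \cite[Theorem~5.13]{AiML2018-sums}, the sum representation $\GL\ltms\LS{4}=\Log\LSuma{0}{\Trf}{\LSuma{1}{\Trf}{\clC}}$ from \cite[Theorem~5.14]{AiML2018-sums} combined with Corollary~\ref{cor:iteratedPSpace} for the upper bound, and Corollary~\ref{cor:Hard} for hardness. You merely make explicit two steps the paper leaves implicit --- the passage from $\Trf$ to $\NPO$ so that the closure-under-disjoint-unions hypothesis of Corollary~\ref{cor:iteratedPSpace} is met, and the transfer of hardness from the full lexicographic-sum class to the constant-family subclass (which also follows at once from the stated equality of the two logics) --- and both are handled correctly.
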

This result contrasts with the undecidability results for {\em modal products} of transitive logics \cite{GabelaiaAtAlUndec2005}.

\section{Conclusion}\label{sec:concl}

In many cases, sum-like operations preserve the finite model property and decidability \cite{BabRyb2010}, \cite{AiML2018-sums}. In this paper
we showed  that transferring results can be obtained for the complexity of the modal satisfiability problems on sums.
In particular, it follows that for many logics $\PSPACE$-completeness is immediate from their semantic characterizations.

Let us indicate some further results and directions.
\begin{itemize}

\item Sums and products with linear indices

In the linear case, {\em modal products} are typically (highly) undecidable \cite{ReynoldsZakh_Linear2001}.
However, the modal satisfiability problem on the lexicographic squares of dense unbounded linear orders is in $\NP$ \cite{BalbianiMik2013}.
This positive result seems to be scalable according to the following observation:
in many cases, $\vf$ is satisfiable in sums over linear (pre)orders iff
$\vf$ is satisfiable in such sums that the length of indices is bounded by $\lvf$. In this situation there is a stronger than $\RedPSP$ reduction
between the sums and the summands.


\item Further results on the finite model property and decidability of sums

In many cases, the finite model property of a modal logic $L$ can be obtained by a filtration method; in this case we say that
$L$ {\em admits filtration}.
It follows from \cite{BabRyb2010} that filtrations of refinements can be reconstructed from filtrations of components.
This result can be extended for a more general setting of lexicographic sums.

Also, the results obtained in \cite{BabRyb2010} in a combination with Theorem \ref{thm:interch1}  suggest the  following conjecture:
in the case of finitely many modalities, if $\Log{\univ{\clF}}$ has the finite model property and $\Log{\clI}$ admits filtration, then
the logic of $\LSum{\clI}{\clF}$ has the finite model property.

Another fact that we announce relates to the property of local finiteness of logics:
if both the logics of indices and summands are locally finite, then the
logic of lexicographic sums (a fortiori, of  lexicographic products)
is locally finite.

\item Axiomatization of sums

For unimodal logics $L_1,L_2$, let $\LSuml{L_1}{L_2}$ be the logic of the class $\LSuml{\Frames{L_1}}{\Frames{L_2}}$.
Consider the following 2-modal formulas:
$$\alpha=\Di_1\Di_0 p\imp \Di_0 p,
\quad \beta=\Di_0\Di_1 p\imp \Di_0 p,
\quad  \gamma=\Di_0 p\imp \Box_1 \Di_0 p
$$
One can see that these formulas are valid in every lexicographic sum $\LSuml{\frI}{\frF_i}$ (and hence, in every product $\frI\ltms\frF$)
of 1-frames. In many cases, these axioms provide a complete axiomatization of $\LSuml{L_1}{L_2}$.
In particular, the logic $\LSuml{\GL}{\GL}=\Log(\LSuml{\NPO}{\NPO})$, the bimodal fragment of the logic $\J$ considered in Section \ref{subs:Japar},
is the logic $\GL\fus\GL+\{\alpha,\beta,\gamma\}$  \cite{Bekl-Jap}
($L_1 \fus L_2$ denotes the {\em fusion} of $L_1$ and $L_2$,
$L+\Psi$ is for the least logic containing $L\cup \Psi$).
Analogous results hold for various lexicographic products \cite{Balb2009}; e.g.,
$\LS{4}\ltms \LS{4}=\LS{4}\fus\LS{4}+\{\alpha,\beta,\gamma\}$.
We announce the following results:
\begin{enumerate}
\item
If  $\vL_{1} \ast \vL_{2}+\{\alpha,\beta,\gamma\}$ is Kripke complete, and
the class $\Frames{\vL_{1}}$ (considered as a class of models in the classical model-theoretic sense) is first-order definable without equality, then
$$\LSuml{\vL_1}{\vL_2}=\vL_{1} \ast \vL_{2}+\{\alpha,\beta,\gamma\}.$$
\item
Assume that $\Frames{\vL_{2}}$ is closed under direct products and validates the property $\AA x \EE y\, xRy$,
and  that
the class $\Frames{\vL_{1}}$ is first-order definable without equality. If the logic $\vL_{1} \ast \vL_{2}+\{\alpha,\beta,\gamma\}$ is Kripke complete, then
$$\vL_1\ltms\vL_2=\vL_{1} \ast \vL_{2}+\{\alpha,\beta,\gamma\}.$$
\end{enumerate}

At the same time, no general axiomatization results are known for the logics of sums in the sense of Definition \ref{def:sum-poly-extra}.

\item Sum-based operations in the Kripke-incomplete case

The operations we considered so far lead to Kripke complete logics. What could be definition of sums for modal algebras (models, general Kripke frames)?
E.g., can we give a semantic characterization of an important Kripke-incomplete logic $\GLP$ by sums of form
$\LSuma{}{\NPO}\ldots{\LSuma{}{\NPO}{\clC}}$?
\end{itemize}

\section{Acknowledgements}
I am sincerely grateful to the reviewers for multiple important suggestions on the earlier version of the manuscript.


\bibliographystyle{alpha}
\bibliography{CSat}

\end{document}